\documentclass{article}

\usepackage{amsmath,amsthm,amssymb,amsfonts}
\usepackage{verbatim}
\usepackage{stmaryrd} 
\usepackage{hyperref}
\usepackage[colorinlistoftodos]{todonotes}
\usepackage{tikz}
\usepackage{tikz-cd}
\tikzset{vertex/.style={circle,draw,fill,inner sep=0pt,minimum size=1mm}}

\theoremstyle{plain}

\newtheorem{thm}{Theorem}
\newtheorem*{thm*}{Theorem}
\newtheorem*{lem*}{Lemma}
\newtheorem{lem}[thm]{Lemma}
\newtheorem{cor}[thm]{Corollary}
\newtheorem*{cor*}{Corollary}
\newtheorem{prop}[thm]{Proposition}
\newtheorem{conj}[thm]{Conjecture}
\newtheorem*{conj*}{Conjecture}

\theoremstyle{definition}
\newtheorem{exl}[thm]{Example}
\newtheorem*{exl*}{Example}

\newtheorem{defn}[thm]{Definition}

\newcommand{\R}{\mathbb{R}}
\newcommand{\Z}{\mathbb{Z}}

\newcommand{\adj}{\leftrightarrow}
\newcommand{\adjeq}{\leftrightarroweq}

\DeclareMathOperator{\id}{id}
\DeclareMathOperator{\im}{im}

\DeclareMathOperator{\NP}{NP}

\newcommand{\pd}{\partial}

\renewcommand{\v}{\check}

\begin{document}

%
%

\title{Digital homotopy relations and digital homology theories}
\author{P. Christopher Staecker}
\maketitle

\begin{abstract}
In this paper we prove results relating to two homotopy relations and four homology theories developed in the topology of digital images.

We introduce a new type of homotopy relation for digitally continuous functions which we call ``strong homotopy.'' Both digital homotopy and strong homotopy are natural digitizations of classical topological homotopy: the difference between them is analogous to the difference between digital 4-adjacency and 8-adjacency in the plane.

We also consider four different digital homology theories: a simplicial homology theory by Arslan et al which is the homology of the clique complex, a singular simplicial homology theory by D. W. Lee, a cubical homology theory by Jamil and Ali, and a new kind of cubical homology for digital images with $c_1$-adjacency which is easily computed, and generalizes a construction by Karaca \& Ege. We show that the two simplicial homology theories are isomorphic to each other, but distinct from the two cubical theories.

We also show that homotopic maps have the same induced homomorphisms in the cubical homology theory, and strong homotopic maps additionally have the same induced homomorphisms in the simplicial theory.
\end{abstract}

\section{Introduction}
A \emph{digital image} is a set of points $X$, with some \emph{adjacency relation} $\kappa$, which is symmetric and antireflexive. The standard notation for such a digital image is $(X, \kappa)$. Typically in digital topology the set $X$ is a finite subset of $\Z^n$, and the adjacency relation is based on some notion of adjacency of points in the integer lattice. This style of digital topology has its origins in the work of Rosenfeld and others, see \cite{rose86} for an early work.

We will use the notation $x \adj_\kappa y$ when $x$ is adjacent to $y$ by the adjacency $\kappa$, and $x \adjeq_\kappa y$ when $x$ is adjacent or equal to $y$. The particular adjacency relation will usually be clear from context, and in this case we will omit the subscript.

The results of this paper hold generally, without any specific reference to the embedding of the images in $\Z^n$. Thus we will often simply consider a digital image $X$ as an abstract simple graph, where the vertices are points of $X$, and an edge connects two vertices $x, x' \in X$ whenever $x\adj x'$. 

\begin{defn} 
Let $(X, \kappa), (Y, \lambda)$ be digital images. A function $f: X \rightarrow Y$ is $(\kappa, \lambda)$-continuous iff whenever $x \adj_{\kappa} y$ then $f(x) \adjeq_{\lambda} f(y)$. 
\end{defn}

When $f$ is a continuous bijection with continuous inverse, we say $f$ is an \emph{isomorphism}. 

For simplicity of notation, we will generally not need to reference the adjacency relation specifically. Thus we typically will denote a digital image simply by $X$, and when the appropriate adjacency relations are clear we simply call a function between digital images ``continuous''. We will often refer to digital images as simply ``images''.

For any digital image $X$ the identity map $\id_X:X \to X$ is always continuous.

The topological theory of digital images has, to a large part, been characterized by taking ideas from classical topology and ``discretizing'' them. Typically $\R^n$ is replaced by $\Z^n$, and so on. 
Viewing $\Z$ as a digital image, it makes sense to say $a$ and $b$ are adjacent if and only if $|a-b|=1$. This adjacency relation corresponds to connectivity in the standard topology of $\R$. 

When $n>1$ there is no canonical ``standard adjacency'' to use in $\Z^n$ which corresponds naturally to the standard topology of $\R^n$. In the case of $\Z^2$, for example, at least two different adjacency relations seem reasonable: we can view $\Z^2$ as a rectangular lattice connected by the coordinate grid, so that each point is adjacent to 4 neighbors; or we can additionally allow diagonal adjacencies so that each point is adjacent to 8 neighbors. This is formalized in the following definition from \cite{han05} (though these adjacencies had been studied for many years earlier):

\begin{defn}
Let $k,n$ be positive integers with $k\le n$. Then define an adjacency relation $c_k$ on $\Z^n$ as follows: two points $x,y\in \Z^n$ are $c_k$-adjacent if their coordinates differ by at most 1 in at most $k$ positions, and are equal in all other positions.
\end{defn}

We will also make use of the notion of connectedness. 
For $a,b\in \Z$  and $a<b$, let $[a,b]_\Z$ denote the set $\{a,a+1,\dots, b\}$. This set is called the \emph{digital interval} from $a$ to $b$. Given two points $x,y\in (X,\kappa)$, a \emph{$\kappa$-path from $x$ to $y$} is a $(c_1,\kappa)$-continuous function $p: [0,n]_\Z \to X$ with $p(0)=x$ and $p(n)=y$. When the adjacency relation is understood, a $\kappa$-path is called simply a \emph{path}. 

A digital image $(X,\kappa)$ is \emph{connected} when any two points of $X$ can be joined by a path. A \emph{connected component} of $X$ is a maximal connected subset of $X$.

The following is the standard definition of digital homotopy. For $a,b\in \Z$ with $a\le b$, let $[a,b]_\Z$ be the \emph{digital interval} $[a,b]_\Z = \{a, a+1, \dots, b\}$.

\begin{defn}\cite{boxe99}
\label{homotopydef}
Let $(X,\kappa)$ and $(Y,\lambda)$ be digital images.
Let $f,g: X \rightarrow Y$ be $(\kappa,\lambda)$-continuous functions.
Suppose there is a positive integer $m$ and a function
$H: X \times [0,k]_{\Z} \rightarrow Y$
such that:

\begin{itemize}
\item for all $x \in X$, $H(x,0) = f(x)$ and $H(x,k) = g(x)$;
\item for all $x \in X$, the induced function
      $H_x: [0,k]_{\Z} \rightarrow Y$ defined by
          \[ H_x(t) ~=~ H(x,t) \mbox{ for all } t \in [0,k]_{\Z} \]
          is $(c_1,\lambda)-$continuous. 
\item for all $t \in [0,k]_{\Z}$, the induced function
         $H_t: X \rightarrow Y$ defined by
          \[ H_t(x) ~=~ H(x,t) \mbox{ for all } x \in  X \]
          is $(\kappa,\lambda)-$continuous.
\end{itemize}
Then $H$ is a {\em [digital] homotopy between} $f$ and
$g$, and $f$ and $g$ are {\em [digitally] homotopic}, denoted
$f \simeq g$.

If $k=1$, then $f$ and $g$ are {\em homotopic in one step}.
\end{defn}

Homotopy in one step can easily be expressed in terms of individual adjacencies:
\begin{prop}\label{htpadjs}
Let $f,g:X\to Y$ be continuous. Then $f$ is homotopic to $g$ in one step if and only if for every $x\in X$, we have $f(x)\adjeq g(x)$.
\end{prop}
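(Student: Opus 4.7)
The plan is to prove both directions directly from the definition of homotopy with $k=1$.

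For the forward direction, I would assume a one-step homotopy $H: X \times [0,1]_\Z \to Y$ exists between $f$ and $g$. Fix an arbitrary $x \in X$. Since $0$ and $1$ are $c_1$-adjacent in $[0,1]_\Z$, and the slice $H_x: [0,1]_\Z \to Y$ is $(c_1,\lambda)$-continuous, the definition of continuity forces $H_x(0) \adjeq H_x(1)$. Unpacking, this says $f(x) \adjeq g(x)$, which is what we need.

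For the reverse direction, I would construct the homotopy explicitly. Assume $f(x) \adjeq g(x)$ for all $x \in X$, and define $H: X \times [0,1]_\Z \to Y$ by $H(x,0) = f(x)$ and $H(x,1) = g(x)$. The endpoint conditions of Definition \ref{homotopydef} hold by construction. For the slice $H_x$: the only nontrivial adjacency in $[0,1]_\Z$ is between $0$ and $1$, and $H_x(0) = f(x) \adjeq g(x) = H_x(1)$ by hypothesis, so $H_x$ is $(c_1,\lambda)$-continuous. For the slice $H_t$: we have $H_0 = f$ and $H_1 = g$, both of which are $(\kappa,\lambda)$-continuous by assumption. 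Hence $H$ is a valid homotopy in one step.

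There is no real obstacle here; the proposition is essentially a restatement of the $k=1$ case of the homotopy definition. The only subtle point is being careful that $\adjeq$ (as opposed to strict adjacency $\adj$) is the right relation: this is exactly what the ``adjacent or equal'' clause in the definition of $(c_1,\lambda)$-continuity produces when we evaluate $H_x$ at the two adjacent points $0$ and $1$, and it also correctly allows the degenerate case $f = g$, where the constant homotopy $H(x,t) = f(x)$ witnesses $f \simeq g$ in one step.
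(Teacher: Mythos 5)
Your proof is correct and takes essentially the same approach as the paper's: both directions are direct verifications against Definition \ref{homotopydef} with $k=1$, constructing the obvious two-step homotopy for the converse. No issues.
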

\begin{proof}
Assume that $f$ is homotopic to $g$ in one step. The homotopy is simply defined by $H(x,0)=f(x)$ and $H(x,1)=g(x)$. Then by continuity in the second coordinate of $H$, we have $f(x) = H(x,0) \adjeq H(x,1) = g(x)$ as desired.

Now assume that $f(x) \adjeq g(x)$ for each $x$. Define $H:X\times [0,1]_\Z \to Y$ by $H(x,0)=f(x)$ and $H(x,1)=g(x)$. Clearly $H(x,t)$ is continuous in $x$ for each fixed $t$, since $f$ and $g$ are continuous. Also $H(x,t)$ is continuous in $t$ for fixed $x$ because $H(x,0) = f(x) \adjeq g(x) = H(x,1)$. Thus $H$ is a one step homotopy from $f$ to $g$ as desired.
\end{proof}

Definition \ref{homotopydef} is inspired by the concept of homotopy from classical topology, 
but the classical definition is simpler because it can use the product topology. 
In classical topology the continuity of the two types of ``induced function'' is expressed simply by saying that $H:X\times [0,1] \to Y$ is continuous with respect to the product topology on $X \times [0,1]$.

Given two digital images $A$ and $B$, we can consider the product $A\times B$ as a digital image, but there are several choices for the adjacency to be used. The most natural adjacencies are the \emph{normal product adjacencies}, which were defined by Boxer in \cite{boxe17}. This generalizes an earlier product construction from \cite{han05}, which is equivalent to $\NP_2$.
\begin{defn}\cite{boxe17}
For each $i\in \{1,\dots,n\}$, let $(X_i,\kappa_i)$ be digital images. Then for some $u \in \{1,\dots,n\}$, the \emph{normal product adjacency} $\NP_u(\kappa_1,\dots,\kappa_n)$ is the adjacency relation on $\prod_{i=1}^n X_i$ defined by: $(x_1,\dots, x_n)$ and $(x'_1,\dots, x'_n)$ are adjacent if and only if their coordinates are adjacent in at most $u$ positions, and equal in all other positions.

When the underlying adjacencies are clear, we abbreviate $\NP_u(\kappa_1,\dots,\kappa_n)$ as simply $\NP_u$.
\end{defn}

The normal product adjacency is inspired by the various standard adjacencies typically used on $\Z^n$. On $\Z^1$, as mentioned above, the standard adjacency is $c_1$. Viewing $\Z^2$ as the product $\Z^2 = \Z \times \Z$, it is easy to see that 4-adjacency is the same as $\NP_1(c_1,c_1)$, and 8-adjacency is $\NP_2(c_1,c_1)$. The typical adjacencies used in $\Z^3$ are 6-, 18-, and 26-adjacency, depending on which types of diagonal adjacencies are allowed. These adjacencies are exactly $\NP_1(c_1,c_1,c_1)$, $\NP_2(c_1,c_1,c_1)$, and $\NP_3(c_1,c_1,c_1)$. 

Boxer showed that the definition of homotopy can be rephrased in terms of an $\NP_1$ product adjacency:
\begin{thm}\cite[Theorem 3.6]{boxe17}\label{np1thm}
Let $(X,\kappa)$ and $(Y,\lambda)$ be digital images. Then $H:X\times [0,k]_\Z \to Y$ is a homotopy if and only if $H$ is $(\NP_1(\kappa,c_1),\lambda)$-continuous.
\end{thm}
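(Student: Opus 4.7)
The plan is to prove the biconditional by unpacking the definitions of the two continuity requirements in Definition~\ref{homotopydef} and comparing them with the adjacency structure of $\NP_1(\kappa,c_1)$. Throughout, $H$ is thought of as already satisfying the boundary conditions $H(x,0)=f(x)$, $H(x,k)=g(x)$; the content of the theorem is about the continuity conditions.

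For the forward direction, assume $H$ is a digital homotopy. Suppose $(x,t)\adj_{\NP_1(\kappa,c_1)} (x',t')$. By definition of $\NP_1$ with two factors, the two pairs of coordinates agree except in at most one position, and disagree by adjacency in at most one position; since the relation is antireflexive, exactly one of the following holds: either $x=x'$ and $t\adj_{c_1} t'$, or $t=t'$ and $x\adj_\kappa x'$. In the first case, the continuity of $H_x$ gives $H(x,t)=H_x(t)\adjeq_\lambda H_x(t')=H(x',t')$. In the second case, the continuity of $H_t$ gives $H(x,t)=H_t(x)\adjeq_\lambda H_t(x')=H(x',t')$. In either case $H(x,t)\adjeq_\lambda H(x',t')$, so $H$ is $(\NP_1(\kappa,c_1),\lambda)$-continuous.

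For the reverse direction, assume $H$ is $(\NP_1(\kappa,c_1),\lambda)$-continuous. To check that $H_x:[0,k]_\Z\to Y$ is $(c_1,\lambda)$-continuous for fixed $x$, note that if $t\adj_{c_1} t'$ then $(x,t)$ and $(x,t')$ differ only in the second coordinate and by a $c_1$-adjacency there, so $(x,t)\adj_{\NP_1(\kappa,c_1)} (x,t')$. Continuity of $H$ then gives $H_x(t)\adjeq_\lambda H_x(t')$. The analogous argument, exchanging the roles of the two coordinates, shows that $H_t$ is $(\kappa,\lambda)$-continuous for each fixed $t$.

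There is no real obstacle here: the proof is a pure definition-chase whose substance is the observation that the $\NP_1$-adjacency on $X\times[0,k]_\Z$ decomposes precisely into ``horizontal'' $\kappa$-adjacencies (at a fixed time slice) and ``vertical'' $c_1$-adjacencies (at a fixed point of $X$), which are exactly the two families of slices whose continuity is required in Definition~\ref{homotopydef}.
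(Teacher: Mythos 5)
Your proof is correct: the decomposition of $\NP_1(\kappa,c_1)$-adjacency into a $\kappa$-adjacency at fixed $t$ or a $c_1$-adjacency at fixed $x$ (never both, by the definition of $\NP_1$ and antireflexivity) is exactly what makes the equivalence with the two slice-continuity conditions of Definition~\ref{homotopydef} immediate. The paper itself only cites this result from Boxer rather than reproving it, and your definition-chase is essentially the same argument as the cited proof.
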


Once we have a notion of homotopy, it is natural to define homotopy equivalence:
\begin{defn}
Digital images $X$ and $Y$ are \emph{homotopy equivalent} when there exist continuous functions $f:X\to Y$ and $g:Y\to X$ with $g\circ f \simeq \id_X$ and $f\circ g \simeq \id_Y$, where $\id_X$ and $\id_Y$ denote the identity functions on $X$ and $Y$. In this case $f$ and $g$ are called \emph{homotopy equivalences}. 
\end{defn}

The structure of this paper is as follows: in Section \ref{strongsection} we define strong homotopy and give some of its basic properties. In Section \ref{punctuatedsec} we show that a homotopy is strong if and only if it can be made ``punctuated'', that is, changing by only one point at a time. In Section \ref{homologytheoriessection} we describe three different digital homology theories existing in the literature. In Section \ref{c1sec} we describe a new theory, defined only for images in $\Z^n$ with $c_1$-adjacency, which we call $c_1$-cubical homology, similar to a construction described in \cite{ke12}. In Section \ref{sxequivsection} we show that the simplicial and singular theories are isomorphic. In Section \ref{homologysec} we prove homotopy and strong homotopy invariance properties for the various theories. In Section \ref{relationshipsection} we describe some relationships between the simplicial and cubical theories, and in Section \ref{exlsection} we describe some specific examples.

The author would like to thank Samira Jamil and Danish Ali for helpful conversations concerning early drafts of this paper.

\section{Strong homotopy}\label{strongsection}

Theorem \ref{np1thm} states that a homotopy is a function $H:X\times [0,k]_\Z \to Y$ which is continuous when we use the $\NP_1$ product adjacency in the domain. We will explore the following definition, which simply uses $\NP_2$ in place of $\NP_1$. As we will see this imposes extra restrictions on the homotopy $H$.

\begin{defn}
Let $(X,\kappa)$ and $(Y,\lambda)$ be digital images. We say $H:X\times [0,k]_\Z \to Y$ is a \emph{strong homotopy} when $H$ is $(\NP_2(\kappa,c_1), \lambda)$-continuous.

If there is a strong homotopy $H$ between $f$ and $g$, we say $f$ and $g$ are \emph{strongly homotopic}, and we write $f \cong g$. If additionally $k=1$, we say $f$ and $g$ are \emph{strongly homotopic in one step}.
\end{defn}

Since we have exchanged $\NP_1$ for $\NP_2$ in the definition above, we may say informally that strong homotopy and digital homotopy provide two different but equally natural ``digitizations'' of the classical topological idea of homotopy. The difference between homotopy and strong homotopy of continuous functions is analogous to the difference between 4-adjacency and 8-adjacency of points in the plane. The strong homotopy relation matches one used in recent work \cite{los19} to build a digital homotopy theory in many respect matching the classical one.

It is clear from the definition of the normal product adjacency that if two points are $\NP_1$-adjacent, then they are $\NP_2$-adjacent. Thus any function $f:A\times B \to C$ which is continuous when using $\NP_2$ in the domain will automatically be continuous when using $\NP_1$ in the domain. Thus we obtain the following, which justifies the use of the word ``strong.''
\begin{thm}\label{strongimplieshtp}
Let $H:X\times [0,k]_\Z \to Y$ be a strong homotopy. Then $H$ is a homotopy.
\end{thm}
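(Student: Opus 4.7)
The plan is to reduce the claim directly to Theorem \ref{np1thm}, which characterizes homotopies exactly as those functions $H : X \times [0,k]_\Z \to Y$ that are $(\NP_1(\kappa,c_1), \lambda)$-continuous. Since $H$ is given to be a strong homotopy, it is $(\NP_2(\kappa,c_1), \lambda)$-continuous by definition, so the whole task reduces to observing that $\NP_2$-continuity implies $\NP_1$-continuity.

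To establish that last implication, I would first unpack the definition of the normal product adjacency. Two points $(x,t), (x',t') \in X \times [0,k]_\Z$ are $\NP_1(\kappa,c_1)$-adjacent when their coordinates are adjacent in at most one position and equal elsewhere; in particular, they are then adjacent in at most two positions, so they are also $\NP_2(\kappa,c_1)$-adjacent. Thus the $\NP_1$-adjacency relation is contained in the $\NP_2$-adjacency relation, and similarly for the $\adjeq$ relation. This is exactly the observation made in the paragraph immediately preceding the theorem, so I would cite it rather than re-derive it in detail.

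With this containment in hand, the continuity step is routine: suppose $(x,t)$ and $(x',t')$ are $\NP_1(\kappa,c_1)$-adjacent. Then they are $\NP_2(\kappa,c_1)$-adjacent, so $\NP_2$-continuity of $H$ yields $H(x,t) \adjeq_\lambda H(x',t')$. Hence $H$ is $(\NP_1(\kappa,c_1), \lambda)$-continuous, and Theorem \ref{np1thm} then identifies $H$ as a homotopy.

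There is essentially no hard step here; the content of the theorem is entirely the set-theoretic inclusion $\NP_1 \subseteq \NP_2$ between adjacency relations, combined with the reformulation of homotopy given by Theorem \ref{np1thm}. The only minor thing to be careful about is whether one works with $\adj$ or $\adjeq$, but since continuity is defined via $\adjeq$ on the target side, the inclusion of adjacency relations on the domain side transfers cleanly.
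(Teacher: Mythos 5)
Your proposal is correct and matches the paper's argument exactly: the paper likewise notes that $\NP_1$-adjacency implies $\NP_2$-adjacency, so $\NP_2$-continuity implies $\NP_1$-continuity, and then invokes Theorem \ref{np1thm} to conclude that $H$ is a homotopy. Nothing is missing.
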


A standard argument shows that strong homotopy is an equivalence relation.
\begin{thm}
Strong homotopy is an equivalence relation.
\end{thm}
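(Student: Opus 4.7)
The plan is to verify the three axioms (reflexivity, symmetry, transitivity) directly from the definition, exactly as one proves homotopy is an equivalence relation, using that strong homotopy is just continuity with respect to the $\NP_2$ product adjacency instead of $\NP_1$.

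For reflexivity, given a continuous $f:X\to Y$, I would take $k=1$ and define $H:X\times[0,1]_\Z\to Y$ by $H(x,t)=f(x)$. Checking $\NP_2$-continuity amounts to noting that if $(x,t)\adj_{\NP_2}(x',t')$, then $x\adjeq_\kappa x'$, whence $f(x)\adjeq_\lambda f(x')$ by continuity of $f$. For symmetry, given a strong homotopy $H:X\times[0,k]_\Z\to Y$ from $f$ to $g$, I would define the reversed homotopy $\bar H(x,t)=H(x,k-t)$. The map $t\mapsto k-t$ is a $c_1$-isomorphism of $[0,k]_\Z$ to itself, so it preserves $\NP_2(\kappa,c_1)$-adjacency, and $\bar H$ inherits continuity from $H$; its endpoints are $g$ and $f$.

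The main content is transitivity. Given strong homotopies $H_1:X\times[0,k_1]_\Z\to Y$ from $f$ to $g$ and $H_2:X\times[0,k_2]_\Z\to Y$ from $g$ to $h$, I would concatenate by setting
\[
H(x,t)=\begin{cases} H_1(x,t) & 0\le t\le k_1,\\ H_2(x,t-k_1) & k_1\le t\le k_1+k_2.\end{cases}
\]
This is well-defined because both pieces give $g(x)$ at $t=k_1$. To verify $\NP_2(\kappa,c_1)$-continuity at an adjacent pair $(x,t)\adj_{\NP_2}(x',t')$, observe that $t$ and $t'$ are $c_1$-adjacent or equal, so $|t-t'|\le 1$. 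Hence $t,t'$ cannot lie on strictly opposite sides of $k_1$, and both values fall into the domain of $H_1$ or both into the domain of $H_2$; the required adjacency of images then follows from the strong continuity of $H_1$ or $H_2$ respectively.

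There is no real obstacle here: the argument is the standard ``reverse and concatenate'' routine used for digital homotopy, and the only point to check is that replacing $\NP_1$ by $\NP_2$ in the target adjacency does not break the splitting-by-cases step in transitivity. Since $\NP_2$-adjacency in $X\times[0,k]_\Z$ still forces the time coordinates to be $c_1$-adjacent (or equal), the case split at $t=k_1$ goes through exactly as before.
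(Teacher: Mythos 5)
Your proof is correct and is precisely the ``standard argument'' the paper alludes to without writing out: constant homotopy for reflexivity, time-reversal for symmetry, and concatenation for transitivity, with the key observation that $\NP_2$-adjacency still forces $|t-t'|\le 1$ so the case split at the gluing time $k_1$ is exhaustive. Nothing is missing.
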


Strong homotopy in one step can be expressed in terms of adjacencies as in Proposition \ref{htpadjs}.
\begin{thm}\label{stronghtpadjs}
Let $f,g:X\to Y$ be continuous. Then $f$ is strongly homotopic to $g$ in one step if and only if for every $x,x'\in X$ with $x\adj x'$, we have $f(x)\adjeq g(x')$.
\end{thm}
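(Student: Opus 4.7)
The plan is to reduce the claim to an unpacking of $\NP_2(\kappa,c_1)$-adjacency on the two-coordinate product $X \times [0,1]_\Z$. Because there are only two factors, the defining condition ``adjacent in at most $2$ positions, equal elsewhere'' collapses: two distinct points $(x,s)$ and $(x',t)$ are $\NP_2$-adjacent if and only if $x \adjeq x'$ and $s \adjeq t$. This is the observation the whole proof turns on.

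For the forward implication, suppose $H:X \times [0,1]_\Z \to Y$ is a strong homotopy in one step with $H(\cdot,0)=f$ and $H(\cdot,1)=g$. Given $x \adj x'$, the pair $(x,0),(x',1)$ differs in both coordinates, being $\kappa$-adjacent in the first and $c_1$-adjacent in the second, hence is $\NP_2$-adjacent. Continuity of $H$ then yields $f(x)=H(x,0) \adjeq H(x',1)=g(x')$.

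For the converse, define $H(x,0)=f(x)$ and $H(x,1)=g(x)$ and verify $(\NP_2(\kappa,c_1),\lambda)$-continuity on each $\NP_2$-adjacent pair. By the unpacking above, such a pair satisfies $x \adjeq x'$ and $s \adjeq t$, so three cases remain: (i) $s=t$ with $x \adj x'$, handled immediately by continuity of $f$ or $g$; (ii) $x=x'$ with $\{s,t\}=\{0,1\}$, reducing to $f(x) \adjeq g(x)$; and (iii) $x \adj x'$ with $\{s,t\}=\{0,1\}$, which is exactly the hypothesis.

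The only subtle point I anticipate is case (ii), since the stated hypothesis formally quantifies only over strict adjacency $x \adj x'$ and does not include $x=x'$. I would resolve this by reading the hypothesis with $\adjeq$ in place of $\adj$ (equivalently, noting that a strong homotopy is in particular a homotopy and invoking Proposition \ref{htpadjs} to obtain $f(x) \adjeq g(x)$ pointwise); with case (ii) dispatched this way, the three-case check closes the argument. Apart from this bookkeeping, nothing in the proof is deep — the content is entirely in recognizing that $\NP_2$ on a two-factor product degenerates to ``$\adjeq$ in each coordinate.''
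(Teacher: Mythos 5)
Your argument is the same as the paper's: unpack $\NP_2(\kappa,c_1)$-adjacency on the two-factor product as ``$\adjeq$ in each coordinate,'' get the forward direction from continuity of $H$ on the pairs $(x,0),(x',1)$, and prove the converse by the case split on the time coordinates. The subtlety you flag in case (ii) is real, and you are right to worry about it: the paper's own proof of the converse silently reads the hypothesis with $\adjeq$ in place of $\adj$ (it deduces $x\adjeq x'$ and then invokes $f(x)\adjeq g(x')$), and with the literal hypothesis quantified only over $x\adj x'$ the implication fails. For instance, take $X=\{a,b\}$ with $a\adj b$ and $Y=[0,2]_\Z$ with $c_1$-adjacency, $f(a)=0$, $f(b)=1$, $g(a)=2$, $g(b)=1$: then $f(a)\adjeq g(b)$ and $f(b)\adjeq g(a)$, so the stated condition holds, but $f(a)=0$ and $g(a)=2$ are neither adjacent nor equal, so by Proposition \ref{htpadjs} the maps are not even homotopic in one step. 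So your first resolution --- restating the condition as ``for every $x,x'$ with $x\adjeq x'$, $f(x)\adjeq g(x')$'' --- is the correct one, and it costs nothing in the forward direction since $(x,0)\adj_{\NP_2}(x,1)$ already yields $f(x)\adjeq g(x)$.

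Your parenthetical alternative, however, does not work: in the converse direction you cannot appeal to ``a strong homotopy is in particular a homotopy'' to obtain $f(x)\adjeq g(x)$, because at that point $H$ is the object whose (strong) continuity you are trying to establish, and you do not yet know $f\simeq g$ in one step. That route is circular; keep the $\adjeq$ reading of the hypothesis and the three-case check closes as you describe.
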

\begin{proof}
First assume that $f$ is homotopic to $g$ in one step. The homotopy is simply defined by $H(x,0)=f(x)$ and $H(x,1)=g(x)$. Then if $x \adj x'$, we will have $(x,0) \adj_{\NP_2} (x',1)$, and thus since $H$ is $\NP_2$-continuous we have $f(x) = H(x,0) \adjeq H(x',1) = g(x')$ as desired.

Now assume that $f(x) \adjeq g(x')$ for each $x$. Define $H:X\times [0,1]_\Z \to Y$ by $H(x,0)=f(x)$ and $H(x,1)=g(x)$, and we must show that $H$ is $\NP_2$-continuous. Take $(x,t),(x',t') \in X\times [0,1]_\Z$ with $(x,t)\adj_{\NP_2} (x',t')$, and we will show that $H(x,t)\adjeq H(x',t')$. We have a few cases based on the values of $t,t'\in \{0,1\}$.

If $t=t'$, without loss of generality say $t=t'=0$. Since $(x,t)\adj_{\NP_2} (x',t')$, we have $x\adjeq x'$, and so we have 
\[ H(x,t) = H(x,0) = f(x) \adjeq f(x') = H(x',0) = H(x',t') \]
since $f$ is continuous. Thus $H(x,t)\adjeq H(x',t')$ as desired.

Finally, if $t\neq t'$, without loss of generality assume $t=0$ and $t'=1$. Since $(x,t)\adj_{\NP_2} (x',t')$, we have $x\adjeq x'$, and so we have 
\[ H(x,t) = H(x,0) = f(x) \adjeq g(x') = H(x',1) = H(x',t') \]
and so $H(x,t)\adjeq H(x',t')$ as desired.
\end{proof}

By Theorem \ref{strongimplieshtp}, if $f \cong g$ then automatically we have $f \simeq g$. The converse is not true, however, as the following example shows.

One important source of examples in the study of digital images is the \emph{digital cycle} $C_n = \{c_0,\dots, c_{n-1}\}$, with adjacency given by $c_i \adj c_{i+1}$ for each $i$, where for convenience we always read the subscripts modulo $n$. Thus $C_n$ is a digital image of $n$ points which is in many ways topologically analogous to the circle.

\begin{exl}\label{strongrigid}
It is well known that all selfmaps on $C_4$ are homotopic to one another. But we will show that the identity map $\id_{C_4}:C_4 \to C_4$ is not strongly homotopic to any map $f$ with $\#f(C_4) < 4$. It will suffice to show that $\id_{C_4}$ is not strongly homotopic in 1 step to any such map. 

Without loss of generality assume that $f(C_4) \subseteq \{c_0,c_1,c_2\}$, and assume for the sake of a contradiction that $f$ is strongly homotopic in one step to $\id_{C_4}$. Then by Theorem \ref{stronghtpadjs}, since $c_2 \adj c_3$ and $c_0\adj c_3$, we will have $c_2 \adjeq f(c_3)$ and also $c_0 \adjeq f(c_3)$. Thus $f(c_3)$ is adjacent to both $c_0$ and $c_2$, but cannot equal $c_3$ since $c_3 \not\in f(C_4)$. We conclude that $f(c_3)=c_1$. By Theorem \ref{htpadjs}, this contradicts the fact that $f$ is homotopic to the identity in 1 step.
\end{exl}

\section{Punctuated homotopy}\label{punctuatedsec}
Given a homotopy $H(x,t)$, we say that $H$ is \emph{punctuated} if, for each $t$, there is some $x_t$ such that $H(x,t) = H(x,t+1)$ for all $x\neq x_t$. That is, from each stage of the homotopy to the next, the induced map $H_t(x)$ is changing by at most one point at a time. 

\begin{thm}
Any punctuated homotopy is a strong homotopy.
\end{thm}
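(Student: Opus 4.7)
The plan is to check $(\NP_2(\kappa,c_1),\lambda)$-continuity of $H$ directly from the definition, by a short case analysis on how the two points in the domain are $\NP_2$-adjacent. Suppose $(x,t)\adj_{\NP_2}(x',t')$; then in particular $x\adjeq_\kappa x'$ and $t \adjeq_{c_1} t'$, and we must verify $H(x,t)\adjeq_\lambda H(x',t')$.

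If $t=t'$, this is immediate from continuity of the slice $H_t:X\to Y$, which holds because $H$ is already a (digital) homotopy. If $x=x'$, it follows from continuity of $H_x:[0,k]_\Z\to Y$, for the same reason. So the only substantive case is $x\neq x'$ with $x\adj_\kappa x'$ and, without loss of generality, $t'=t+1$. Here the punctuated hypothesis enters: choose $x_t\in X$ such that $H(x,t)=H(x,t+1)$ for every $x\neq x_t$. Since $x\neq x'$, at least one of $x,x'$ differs from $x_t$.

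If $x\neq x_t$, then $H(x,t)=H(x,t+1)$, so $H(x,t)\adjeq H(x',t+1)$ reduces to $H(x,t+1)\adjeq H(x',t+1)$, which holds by continuity of the slice $H_{t+1}$. If instead $x'\neq x_t$ (the remaining possibility), then $H(x',t)=H(x',t+1)$, so we need $H(x,t)\adjeq H(x',t)$, which holds by continuity of $H_t$. In every case $H(x,t)\adjeq_\lambda H(x',t')$, so $H$ is $\NP_2$-continuous and therefore a strong homotopy.

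I do not expect a real obstacle here: the punctuated property is exactly what is needed to convert a diagonal $\NP_2$-adjacency in the domain into a horizontal one (same $t$-level), after which we invoke the continuity of the time-slices that is already built into the definition of a homotopy. The only thing to be a little careful about is to handle both orderings $t<t'$ and $x=x_t$ vs.\ $x'=x_t$ without implicitly assuming symmetry where none exists, which is why I split on which of $x,x'$ is \emph{not} the exceptional point $x_t$ rather than on which one is.
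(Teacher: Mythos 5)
Your proof is correct and follows essentially the same argument as the paper: verify $\NP_2$-continuity directly, dispose of the $t=t'$ case by continuity of the time-slices, and use the punctuated property to shift a diagonal adjacency to a single time level. The only (immaterial) difference is that you handle $x=x'$ up front via continuity of $H_x$, whereas the paper treats the residual case $x=x'=x_t$ at the end; the two decompositions cover the same cases by the same means.
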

\begin{proof}
Let $H$ be a punctuated homotopy, and let $(x,t) \adj_{\NP_2} (x',t')$. We must show that $H(x,t) \adjeq H(x',t')$. Since $(x,t) \adj_{\NP_2} (x',t')$ we have $x \adjeq x'$ and $t \adjeq t'$. If $t=t'$, then we have $(x,t) \adj_{\NP_1} (x',t')$ and so $H(x,t) \adjeq H(x',t')$ since $H$ is a homotopy. It remains to consider when $t \adj t'$ and $t\neq t'$. In this case, without loss of generality assume $t'=t+1$.

Since $H$ is a punctuated homotopy, there is some $x_t$ such that $H(x,t) = H(x,t+1)$ for all $x\neq x_t$. 
When $x \neq x_t$, we have 
\[ H(x,t) = H(x,t+1) \adjeq H(x',t+1) = H(x',t') \]
since $H$ is a homotopy. Similarly we have $H(x,t) \adjeq H(x',t')$ when $x' \neq x_t$.

Thus it remains only to consider when $x=x'=x_t$. That is, we must show that $H(x_t,t) \adjeq H(x_t,t') = H(x_t,t+1)$, and this is true because $H$ is a homotopy.
\end{proof}

We also have a sort of converse to the above. While not every strong homotopy is punctuated, any two strongly homotopic maps can be connected by a punctuated homotopy, provided that the domain is finite.

\begin{thm}\label{punctuatedthm}
Let $X$ be a finite digital image, and let $f,g:X\to Y$ be strongly homotopic. Then $f$ and $g$ are homotopic by a punctuated homotopy.
\end{thm}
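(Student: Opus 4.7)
The plan is to start with a strong homotopy $H : X \times [0,k]_\Z \to Y$ and refine it by inserting intermediate time slices, so that between consecutive slices only one point's value changes. Since $X$ is finite, enumerate $X = \{x_1, \dots, x_n\}$. Between each original time $t$ and $t+1$, I will insert $n$ new slices that update $x_1$ first, then $x_2$, and so on, until all points have been updated to their time-$(t+1)$ values.

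Concretely, define $H' : X \times [0, kn]_\Z \to Y$ as follows. For $s \in [0,kn]_\Z$, write $s = tn + j$ with $0 \le j \le n-1$ (with the edge case $s = kn$ handled by $t = k$, $j = 0$); then set
\[
H'(x, tn + j) = \begin{cases} H(x, t+1) & \text{if } x = x_i \text{ with } i \le j, \\ H(x, t) & \text{otherwise.} \end{cases}
\]
The boundary conditions are immediate: $H'(x,0) = H(x,0) = f(x)$ and $H'(x, kn) = H(x,k) = g(x)$. The construction is punctuated by design, since passing from $s = tn + j$ to $s+1 = tn + j + 1$ alters only the value at $x_{j+1}$, and this alteration is $H(x_{j+1}, t) \to H(x_{j+1}, t+1)$, which is an adjacency-or-equality by continuity of $H$ in the time coordinate.

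The substantive step is verifying spatial continuity of $H'_s$ for fixed $s = tn + j$. Given $x \adj x'$ in $X$, there are three cases. If both $x, x'$ lie in the updated set $\{x_1, \dots, x_j\}$, then $H'(x,s) = H(x, t+1) \adjeq H(x', t+1) = H'(x', s)$ by spatial continuity of $H$ at time $t+1$. If neither is in the updated set, the same argument works at time $t$. The crucial case is when exactly one, say $x$, is in the updated set: then $H'(x,s) = H(x, t+1)$ and $H'(x', s) = H(x', t)$, so I need $H(x, t+1) \adjeq H(x', t)$. But $(x, t+1)$ and $(x', t)$ differ by an adjacency in both coordinates, hence $(x,t+1) \adj_{\NP_2} (x',t)$, and the strong (not merely ordinary) homotopy hypothesis delivers the required adjacency.

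The main obstacle is precisely this cross-case: it is the only place where $\NP_2$-continuity of the original homotopy is needed, and it is exactly why the theorem fails for homotopies that are not strong. The enumeration of $X$ is essentially arbitrary, and finiteness is used only to guarantee that the inductive update of one point at a time terminates in finitely many steps, producing a homotopy over the finite interval $[0, kn]_\Z$.
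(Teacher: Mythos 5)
Your proof is correct and is essentially the paper's argument: both insert, between consecutive time slices of the strong homotopy, a chain of intermediate maps that update one point of the finite image at a time, with the mixed-case adjacency $H(x,t+1)\adjeq H(x',t)$ supplied exactly by the $\NP_2$-continuity of $H$. The only difference is organizational: the paper first reduces by induction to a one-step strong homotopy and uses the characterization in Theorem \ref{stronghtpadjs}, then performs this one-point-at-a-time construction, whereas you carry out the same construction globally over $[0,kn]_\Z$ in a single pass.
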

\begin{proof}
By induction, it suffices to show that if $f$ and $g$ are strongly homotopic in one step, then they are homotopic by a punctuated homotopy. Enumerate the points of $X$ as $X = \{x_0,\dots, x_n\}$, and define $H:X\times [0,n+1]_\Z \to Y$ by:
\[ H(x_i,t) = \begin{cases} f(x_i)  & \quad\text{ if } i \ge t, \\
g(x_i) &\quad\text{ if } i < t. \end{cases} \]
Then $H$ moves one point at a time, so we need only to show that it has the appropriate continuity properties to be a homotopy. 

First we show that $H(x,t)$ is continuous in $x$ for fixed $t$. Take $x \adj x'$, then $H(x,t) \in \{f(x),g(x)\}$ and $H(x',t) \in \{f(x'),g(x')\}$. Since $f$ and $g$ are homotopic in one step we have $f(x)\adjeq f(x')$ and $g(x) \adjeq g(x')$, and since $f$ and $g$ are strongly homotopic in one step we have $f(x) \adjeq g(x')$ and $g(x) \adjeq f(x')$. Thus in any case we have $H(x,t) \adjeq H(x',t)$ as desired.

Now we show that $H(x,t)$ is continuous in $t$ for fixed $x$. It suffices to show that $H(x,t) \adjeq H(x,t+1)$ for any $t$. We have $H(x,t) \in \{f(x),g(x)\}$ and also $H(x,t+1) \in \{f(x),g(x)\}$. Since $f$ and $g$ are homotopic in one step we have $f(x)\adjeq g(x)$, and thus $H(x,t) \adjeq H(x,t+1)$ as desired.
\end{proof}

The finiteness assumption above is necessary, as the following example shows.
\begin{exl}
Let $X = \Z \times \{0,1\} \subset \Z^2$, with 8-adjacency, and let $f(x,y) = (x,0)$. Then $f$ is strongly homotopic to $\id_X$ in one step. But $f(x,y)$ and $\id(x,y)$ differ for infinitely many values of $(x,y)\in X$. Since a punctuated homotopy has finite time interval, and can only change one value at a time, there can be no punctuated homotopy from $f$ to $\id_X$. 
\end{exl}

Combining the two theorems above gives us a nice characterization of strong homotopy. 
\begin{cor}
If $X$ is finite, two continuous maps $f,g:X\to Y$ are strongly homotopic if and only if they are homotopic by a punctuated homotopy.
\end{cor}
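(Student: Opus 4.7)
The statement is a direct combination of the two theorems proved earlier in Section \ref{punctuatedsec}, so my plan is simply to assemble them; there is no new content to produce.

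For the ``if'' direction, I would appeal to the first theorem of the section: any punctuated homotopy is a strong homotopy, with no finiteness assumption required. Thus if $f$ and $g$ are connected by a punctuated homotopy $H$, then $H$ is itself a strong homotopy, and $f \cong g$.

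For the ``only if'' direction, I would invoke Theorem \ref{punctuatedthm}: under the hypothesis that $X$ is finite, any strong homotopy between $f$ and $g$ can be replaced by a punctuated homotopy between them. This is exactly the content of that theorem, so the conclusion follows immediately.

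There is no real obstacle here; the only thing worth noting is the asymmetric role of the finiteness assumption. Finiteness is needed only in the ``only if'' direction (as the example with $X = \Z\times\{0,1\}$ immediately preceding the corollary shows), while the ``if'' direction holds in complete generality. Since both implications have already been established, the proof of the corollary consists of one sentence citing each theorem in turn.
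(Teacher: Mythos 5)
Your proposal is correct and matches the paper exactly: the corollary is stated there as an immediate combination of the theorem that every punctuated homotopy is strong and Theorem \ref{punctuatedthm}, with finiteness used only in the latter direction. Nothing further is needed.
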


As an application, we show that the identity map on the $n$-cycle for $n\ge 4$ is not strongly homotopic to any other map:
\begin{exl}
Let $n \ge 4$, and assume for the sake of a contradiction that there is some continuous $f:C_n \to C_n$ with $f \cong \id_{C_n}$ and $f\neq \id_{C_n}$. Without loss of generality, assume that the homotopy from $f$ to $\id_{C_n}$ is punctuated and in one step. Since the homotopy is punctuated, $f$ moves one point, so without loss of generality we may assume that $f(c_0)=c_1$ and $f(c_i)=c_i$ for all $i\neq 0$. This contradicts the continuity of $f$, however, since we will have $c_0\adj c_{n-1}$ but $c_1 = f(c_0) \not \adjeq f(c_{n-1}) = c_{n-1}$ since $n\ge 4$. 
\end{exl}

\section{Digital homology theories in the literature}\label{homologytheoriessection}
Digital topological invariants are typically inspired by classical topology, though most are not literally topological in nature. For example, when $X$ and $Y$ are digital images, a digitally continuous function $f:X\to Y$ is not actually continuous in the classical sense with respect to any topologies on $X$ and $Y$. The digital fundamental group $\pi_1(X)$ is not actually the fundamental group of $X$ with respect to any topology on $X$, etc.

Digital homology, however, does fit neatly into the classical theory of homological algebra. Each of the homology theories described in \cite{bko11, lee14, ja19} is indeed a homology theory of a classical chain complex. Though it is not always done in these references, we will make use of results from classical homological algebra whenever possible to avoid the need for specialized definitions and proofs of basic results.

We will review the basic homological algebra that will be useful, see e.g. \cite{hatc02}.

A \emph{chain complex} is a sequence of abelian groups $C_0, C_1, \dots$ and homomorphisms $\pd_q: C_q \to C_{q-1}$ satisfying $\pd_{q-1} \circ \pd_q = 0$ for all $q$. Given some chain complex $C = (C_q,\pd_q)$, for each $q$ we define the \emph{cycle} and \emph{boundary} subgroups $Z_q$ and $B_q$ of $C_q$ as: $Z_q = \ker \pd_q$ and $B_q = \im \pd_{p+1}$. The \emph{dimension $q$ homology group} of the chain complex is defined as $H_q = Z_q / B_q$. When we are discussing the homology groups of various different chain complexes, we will write $H_q(C)$ for the dimension $q$ homology of the chain complex $C$.

Given two chain complexes $A = (A_q,\pd^A_q)$ and $B = (B_q,\pd^B_q)$, a sequence of homomorphisms $f_q:A_q \to B_q$ is a \emph{chain map from $A$ to $B$} when $f_{q-1} \circ \pd_q^A = \pd_q^B \circ f_q$ for each $q$. Every such chain map induces a well-defined sequence of homomorphisms $f_{*,q}: H_q(A) \to H_q(B)$. Furthermore, this correspondence is functorial in the sense that $(f\circ g)_{*,q} = f_{*,q} \circ g_{*,q}$, and the induced homomorphism of the identity function is the identity homomorphism. When the dimensions are clear, we will omit the subscript $q$.

Given three chain complexes $A$, $B$, $C$, and an exact sequence of chain maps:
\[ 0 \to A \xrightarrow{f} B \xrightarrow{g} C \to 0, \]
for each $q$ there is a \emph{connecting homomorphism} $\delta_q:H_q(C) \to H_{q-1}(A)$ such that the following sequence is exact:
\begin{equation}\label{longexact}
\dots \to H_{q+1}(C) \xrightarrow{\delta_{q+1}} H_q(A) \xrightarrow{f_{*,q}} H_q(B) \xrightarrow{g_{*,q}} H_q(C) \xrightarrow{\delta_q} H_{q-1}(A) \to \dots 
\end{equation}
The ``long exact sequence'' above is the fundamental tool of relative homology theory.

\subsection{Simplicial homology}
Digital simplicial homology theory was first defined by Arslan, Karaca, and \"Oztel in \cite{ako08}, in Turkish. This material was extended and published in English in \cite{bko11}. We review the definitions as presented in \cite{bko11}.

For a digital image $X$ and some $q\ge 0$, a $q$-simplex is defined to be any set of $q+1$ mutually adjacent points of $X$. For some ordered list of mutually adjacent points $x_0,\dots,x_q$, the associated ordered $q$-simplex is denoted $\langle x_0,\dots, x_q\rangle$. 

The \emph{chain group} $C_q(X)$ is defined to be the abelian group generated by the set of all ordered $q$-simplices, where if $\rho:\{0,\dots,q\} \to \{0,\dots,q\}$ is a permutation, then in $C_q(X)$ we identify
\begin{equation}\label{permeq}
\langle x_0,\dots, x_q \rangle = (-1)^\rho \langle x_{\rho(0)},\dots,x_{\rho(q)}\rangle, 
\end{equation}
where $(-1)^\rho = 1$ when $\rho$ is an even permutation, and $(-1)^\rho=-1$ when $\rho$ is an odd permutation.

The \emph{boundary homomorphism} $\pd_q: C_q(X) \to C_{q-1}(X)$ is the homomorphism induced by defining:
\begin{equation}\label{simplicialpd}
\pd_q(\langle x_0,\dots,x_q\rangle) = \sum_{i=0}^q (-1)^i \langle x_0,\dots,\widehat{x_i},\dots x_q\rangle,
\end{equation}
where $\widehat{x_i}$ indicates omission of the $x_i$ coordinate.

It can be verified that $\pd_{q-1} \circ \pd_q = 0$, and so $(C_q(X),\pd_q)$ forms a chain complex, and the \emph{dimension $q$ homology group} $H_q(X) = Z_q(X)/B_q(X)$ is defined to be the homology of this chain complex.

Any continuous function $f:X\to Y$ induces a homomorphism $f_{\#,q}:C_q(X) \to C_q(Y)$ defined by
\[ f_{\#,q}(\langle x_0,\dots, x_q\rangle) = \langle f(x_0),\dots,f(x_q)\rangle, \]
where the right side is interpreted as 0 if the set $\{f(x_0),\dots,f(x_q)\}$ has cardinality less than $q$. When the value of $q$ is understood, we simply write $f_{\#,q} = f_\#$. 

It is easy to check that this $f_{\#,q}$ is a chain map, and thus induces a homomorphism $f_{*,q}:H_q(X) \to H_q(X)$. Again, we typically write $f_{*,q} = f_*$ when the $q$ is understood.

We will remark that the constructions above match exactly the homology of the \emph{clique complex} of $X$ when viewed as a graph. The clique complex is the simplicial complex built from the complete subgraphs of a given graph, and the homology of this simplicial complex is the same as the digital homology defined above. The free mathematics software SageMath has built-in functions to compute the clique complex of a graph, and further to compute the homology of any finite complex. Thus it is easy to implement algorithms to compute the simplicial homology groups of a digital image. Source code for computing simplicial homology groups is available at the author's website for experimentation.\footnote{\url{http://faculty.fairfield.edu/cstaecker}}

These definitions and results are all exactly as expected from the classical homology theory of a simplicial complex. As an example, we compute the homology groups of the cycle $C_n$ for $n\ge 4$. The case $n=4$ appears as Theorem 3.17 of \cite{bko11}.
\begin{thm}\label{Cnhomology}
If $n\ge 4$, we have:
\[ H_q(C_n) = \begin{cases} \Z \quad &\text{ if $q\in \{0,1\}$,} \\
0 &\text{ if $q>1$.} \end{cases} \]
\end{thm}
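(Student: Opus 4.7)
The plan is to observe that for $n\ge 4$, the cycle $C_n$ has no triangles: if $c_i, c_j, c_k$ were mutually adjacent, the adjacency $c_i\adj c_j$ forces $j \equiv i \pm 1 \pmod n$, and similarly for the other pairs, which is impossible once $n\ge 4$. Therefore the only cliques in $C_n$ are the $n$ singletons and the $n$ edges $\{c_i, c_{i+1}\}$, so the chain complex is
\[ 0 \to C_1(C_n) \xrightarrow{\pd_1} C_0(C_n) \to 0, \]
with $C_q(C_n) = 0$ for $q\ge 2$, $C_0(C_n) \cong \Z^n$ with basis $\{c_0,\dots,c_{n-1}\}$, and $C_1(C_n) \cong \Z^n$ with basis $\{e_i := \langle c_i, c_{i+1}\rangle\}_{i=0}^{n-1}$ (indices mod $n$). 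In particular $H_q(C_n) = 0$ for all $q\ge 2$ is immediate.

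For $H_0$, I would use the standard argument that for a connected simplicial complex the zero-dimensional homology is $\Z$: the augmentation $\varepsilon:C_0(C_n)\to \Z$ sending each $c_i$ to $1$ is surjective with kernel equal to $B_0(C_n) = \im\pd_1$, because the boundary $\pd_1 e_i = c_{i+1} - c_i$ lets one express the difference of any two vertices as a boundary along a path in $C_n$. Hence $H_0(C_n) = C_0(C_n)/B_0(C_n) \cong \Z$.

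For $H_1$, since $C_2(C_n)=0$ we have $H_1(C_n) = \ker\pd_1$. Writing a general $1$-chain as $z = \sum_{i=0}^{n-1} a_i e_i$, I would compute
\[ \pd_1 z = \sum_{i=0}^{n-1} a_i(c_{i+1}-c_i) = \sum_{j=0}^{n-1} (a_{j-1}-a_j)\, c_j, \]
so $z\in\ker\pd_1$ if and only if $a_{j-1}=a_j$ for all $j$, i.e.\ all $a_i$ are equal. Thus $\ker\pd_1$ is the infinite cyclic group generated by $\sum_i e_i$, and $H_1(C_n)\cong \Z$.

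There is no real obstacle here; the only point requiring any care is the clique observation ruling out $2$-simplices, which depends on $n\ge 4$ (for $n=3$ the three vertices form a triangle and the computation changes). Everything else is a direct computation of kernels and images in the two-term chain complex.
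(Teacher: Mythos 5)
Your proof is correct and follows essentially the same route as the paper's: rule out $2$-simplices using $n\ge 4$, compute $\ker\pd_1$ as the constant-coefficient combinations to get $H_1\cong\Z$, and identify $H_0\cong\Z$ from connectedness. The only cosmetic difference is that you package the $H_0$ computation via the augmentation map $\varepsilon$ (which in fact makes the identification $H_0\cong\Z$ slightly more airtight than the paper's ``all vertices are homologous'' phrasing), but this is the same underlying argument.
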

\begin{proof}
First we prove the case $q=0$. The chain group $C_0(C_n)$ is generated by $n$ different $0$-simplices $\langle c_0\rangle, \dots, \langle c_{n-1}\rangle$. Since $\pd_0$ is a trivial homomorphism, we have $Z_0(C_n) = C_0(C_n)$. Note that for each $i$, we have 
\[ \langle c_i\rangle = (\langle c_i \rangle - \langle c_{i+1} \rangle) + \langle c_{i+1} \rangle = \pd \langle c_{i+1}, c_{i}\rangle + \langle c_{i+1} \rangle, \]
and thus $\langle c_i\rangle - \langle c_{i+1}\rangle \in B_0(C_n)$. Thus $\langle c_i \rangle$ and $\langle c_{i+1}\rangle$ are equal in $H_0(C_n)$ for every $i$. That is, $H_0(C_n)$ is the group generated by $\langle c_0\rangle$, and so $H_0(C_n) = \Z$ as desired.

Now for $q=1$, first we note that there are no $2$-simplices in $C_n$ (because $n\ge 4$), so $C_2(C_n)$ is trivial, and thus $B_1(C_n)$ is trivial. Thus $H_1(C_n)$ will be isomorphic to $Z_1(C_n)$. To determine $Z_1(C_n)$, we must determine which $\alpha \in C_1(C_n)$ satisfy $\pd \alpha = 0$. Any $\alpha\in C_1(C_n)$ can be expressed as:
\[ \alpha = w_1\langle c_0,c_1\rangle + \dots + w_n\langle c_{n-1}, c_0\rangle \]
for $w_i \in \Z$, and then $\pd \alpha = 0$ if and only if:
\begin{align*}
0 &= \pd \alpha = w_1(\langle c_1\rangle - \langle c_0\rangle) + \dots + w_n(\langle c_{0}\rangle - \langle c_{n-1}\rangle) \\
&= (w_n-w_1) \langle c_0 \rangle + (w_1-w_2)\langle c_1 \rangle + \dots + (w_{n-1}-w_n)\langle c_{n-1} \rangle
\end{align*}
and thus we have $w_1=w_2=\dots=w_n$ since the $\langle c_i\rangle$ are linearly independent in $C_1(C_n)$. Then we have shown that $H_1(C_n) = Z_1(C_n)$ is generated by the single element 
\[ \sigma = \sum_{i=0}^{n-1} \langle c_i,c_{i+1}\rangle, \]
and thus $H_1(C_n) = \Z$. 

For $q>1$, there are no $q$-simplices and so $C_q(C_n)$ is trivial, and thus $H_q(C_n)$ is trivial.
\end{proof}

In the case of $C_n$, we can make a full computation of the induced homomorphisms for any selfmap, using results from \cite{bs19}. Let $c:C_n \to C_n$ be the constant map $c(c_i) = c_0$, let $l:C_n \to C_n$ be the ``flip map'' $l(c_i) = c_{-i}$, and for some integer $d$, let $r_d:C_n \to C_n$ be the rotation $r_d(c_i) = c_{i+d}$. Theorem 9.3 of \cite{bs19} shows that there are exactly 3 homotopy classes of selfmaps on $C_n$: any map $f:C_n \to C_n$ is either strongly homotopic to a constant, or is homotopic to the identity and equals $r_d$ for some $d$, or is homotopic to the flip map and equals $r_d \circ l$ for some $d$. (The strongness of the homotopy to the constant was not mentioned in \cite{bs19}, but the homotopy demonstrated in the proof in that paper is easily made punctuated and therefore strong.)

\begin{thm}\label{Cninduced}
Let $n>4$, and let $f:C_n \to C_n$ be continuous. Then for all $q>1$, the induced homomorphism $f_{*,q}:H_q(C_n) \to H_q(C_n)$ is trivial, for $q=0$ the induced homomorphism $f_{*,0} = \id$, and for $q=1$ we have:
\[ f_{*,1} = \begin{cases} \id & \text{ if $f \simeq \id_{C_n},$} \\
-\id &\text{ if $f\simeq l$,} \\
0 &\text{ if $f \simeq c$.} \end{cases} \]
\end{thm}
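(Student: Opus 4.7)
The plan is to combine the classification of self-maps of $C_n$ recalled just above from \cite{bs19} with the explicit homology computation of Theorem \ref{Cnhomology}. The argument splits on $q$, and within $q=1$ further splits according to the three homotopy classes.

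For $q>1$ the statement is vacuous because $H_q(C_n)=0$. For $q=0$, Theorem \ref{Cnhomology} tells us that $H_0(C_n)=\Z$ is generated by $[\langle c_0\rangle]$ and that $[\langle c_i\rangle]=[\langle c_0\rangle]$ for every $i$; since $f_\#(\langle c_0\rangle) = \langle f(c_0)\rangle$ lies in the same class, any continuous $f$ fixes the generator and hence induces the identity on $H_0$.

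The substantive case is $q=1$, where $H_1(C_n)=\Z$ is generated by $\sigma = \sum_{i=0}^{n-1}\langle c_i, c_{i+1}\rangle$. I would treat the three homotopy classes separately. If $f\simeq c$, then as noted in the remark following the classification the homotopy can in fact be taken to be strong, and the strong-homotopy invariance of simplicial homology proven in Section \ref{homologysec} reduces the computation to $c_{*,1}$; direct calculation gives $c_\#(\langle c_i, c_{i+1}\rangle) = \langle c_0, c_0\rangle = 0$, so $c_{*,1}=0$. If $f\simeq \id_{C_n}$, the classification says $f=r_d$ for some $d$, and reindexing the sum shows $r_{d,\#}(\sigma) = \sum_i \langle c_{i+d}, c_{i+d+1}\rangle = \sigma$, hence $(r_d)_{*,1}=\id$. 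Finally, if $f\simeq l$, then $f=r_d\circ l$; using the antisymmetry relation (\ref{permeq}) we have $l_\#(\langle c_i, c_{i+1}\rangle) = \langle c_{-i}, c_{-i-1}\rangle = -\langle c_{-i-1}, c_{-i}\rangle$, and setting $j=-i-1 \pmod n$ gives $l_\#(\sigma) = -\sigma$. Functoriality combined with the identity subcase then gives $(r_d\circ l)_{*,1}=-\id$.

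The main obstacle is the constant-map subcase: a map $f$ that is only homotopic (not literally equal) to a constant cannot be evaluated on $\sigma$ directly, so we must route the argument through invariance. This is bridged by two external inputs — the observation from \cite{bs19} that the homotopy to a constant on $C_n$ can be strengthened, and the strong-homotopy invariance theorem for the simplicial theory in Section \ref{homologysec}. The identity and flip subcases bypass homotopy invariance entirely, since the classification gives $f$ literally equal to $r_d$ or $r_d \circ l$; the computation then reduces to pure reindexing, with the negative sign in the flip case produced by (\ref{permeq}).
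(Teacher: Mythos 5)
Your proposal is correct and follows essentially the same route as the paper: trivial groups for $q>1$, the generator computation for $q=0$, direct reindexing of $\sigma$ for the $r_d$ and $r_d\circ l$ cases, and strong-homotopy invariance to reduce the constant class to $c_*$ itself. The only cosmetic differences are that you handle $q=0$ uniformly for all maps via connectedness and factor the flip case through functoriality rather than computing $(r_d\circ l)_\#(\sigma)$ in one pass.
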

\begin{proof}
For $q>1$ we have already seen that $H_q(C_n)$ is a trivial group, so we will have $f_*=0$ automatically.
When $f$ is homotopic to a constant, as mentioned above, in fact $f \cong c$ and thus $f_*=c_*$ for all $q$, and so $f_{*,0}$ is the identity and $f_{*,q}$ is trivial for $q>0$.

Now we consider when $f$ is homotopic to the identity or the flip map, for $q\in \{0,1\}$. First we consider $q=0$ and $f\simeq \id_{C_n}$. Since $H_0(C_n)$ is generated by $\langle c_0 \rangle$, it suffices to show that $f_*(\langle c_0\rangle) = \langle c_0 \rangle$. Let $d$ be some integer with $f= r_d$, and we have:
\[ f_*(\langle c_0 \rangle) = \langle f(c_0) \rangle = \langle c_d \rangle = \langle c_0 \rangle \]
as desired. Exactly the same argument applies for $f\simeq l$, since we will still have $\langle f(c_0) \rangle = \langle c_d \rangle$ for some $d$.

Now for $q=1$, and $f\simeq \id_{C_n}$ we must show $f_*(\sigma) = \sigma$, where $\sigma = \sum_{i=0}^{n-1} \langle c_i,c_{i+1} \rangle$ as in the proof of Theorem \ref{Cnhomology}. Let $f = r_d$, and we have:
\[ f(\sigma) =  \sum_{i=0}^{n-1} \langle f(c_i), f(c_{i+1}) \rangle = \sum_{i=0}^{n-1} \langle c_{i+d},c_{i+1+d} \rangle = \sigma \]
as desired.

Finally we consider $q=1$ and $f\simeq l$, and we must show $f_*(\sigma) = -\sigma$. Let $f=r_d \circ l$, so $f(c_i) = c_{d-i}$, and we have:
\begin{align*} 
f(\sigma) &= \sum_{i=0}^{n-1} \langle f(c_i), f(c_{i+1}) \rangle =  \sum_{i=0}^{n-1} \langle c_{d-i},c_{d-i-1} \rangle = \sum_{i=0}^{n-1} \langle c_{-i+1},c_{-i} \rangle  \\
&= \sum_{i=0}^{n-1} \langle c_{i+1},c_{i} \rangle = \sum_{i=0}^{n-1} -\langle c_i,c_{i+1}\rangle = -\sigma
\end{align*}
as desired.
\end{proof}

The three cases of Theorem \ref{Cninduced} suffice to compute $f_*$ for any selfmap of $C_n$, and we note that in each of the three homotopy classes, the set of induced homomorphisms is different. We obtain a sort of Hopf theorem for digital cycles:
\begin{cor}
Let $n> 4$, and let $f,g:C_n \to C_n$ be continuous. Then $f_{*,q} = g_{*,q}$ for each $q$ if and only if $f\simeq g$. 
\end{cor}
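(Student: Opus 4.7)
The plan is to deduce this corollary directly from Theorem \ref{Cninduced} combined with the classification of self-maps of $C_n$ recalled just before that theorem (Theorem 9.3 of \cite{bs19}). The forward implication is immediate: Theorem \ref{Cninduced} computes $f_{*,q}$ purely in terms of which of the three homotopy classes $f$ belongs to, so if $f\simeq g$ then $f$ and $g$ lie in the same class and hence $f_{*,q}=g_{*,q}$ for every $q$.

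For the converse, suppose $f_{*,q}=g_{*,q}$ for all $q$. By Theorem 9.3 of \cite{bs19}, each of $f$ and $g$ is homotopic to exactly one of $\id_{C_n}$, $l$, or a constant map $c$. I would focus on $q=1$, where $H_1(C_n)=\Z$ by Theorem \ref{Cnhomology}. Theorem \ref{Cninduced} tells us $f_{*,1}$ is $\id$, $-\id$, or $0$ depending on which class $f$ lies in. The key observation is that these three homomorphisms of $\Z$ are pairwise distinct, so the value of $f_{*,1}$ determines the homotopy class of $f$ uniquely. From $f_{*,1}=g_{*,1}$ it therefore follows that $f$ and $g$ are in the same homotopy class, and so $f\simeq g$.

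There is essentially no obstacle beyond checking that $\id$, $-\id$, and $0$ are pairwise distinct endomorphisms of $\Z$, which requires $n>4$ only insofar as Theorem \ref{Cnhomology} requires $n\ge 4$ and the three-class classification of self-maps requires $n>4$ (for $n=4$ the flip map coincides with a rotation, collapsing the class count). Because all of the heavy lifting has already been done in Theorem \ref{Cninduced} and the cited classification, the corollary is really just a matter of reading off that the induced map in degree $1$ is a complete homotopy invariant for self-maps of $C_n$.
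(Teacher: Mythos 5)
Your proposal is correct and follows essentially the same route as the paper: the paper's justification is precisely that Theorem \ref{Cninduced} computes the induced homomorphisms purely from the homotopy class, and that the three classes from the classification in \cite{bs19} yield pairwise distinct induced maps in dimension $1$ (namely $\id$, $-\id$, and $0$ on $\Z$), so equality of induced homomorphisms forces $f\simeq g$. The only quibble is your parenthetical about $n=4$: the failure there is not that the flip coincides with a rotation, but that \emph{all} self-maps of $C_4$ are homotopic while $H_1(C_4)\cong\Z$, so the forward implication (homotopy invariance of $f_{*,1}$) already breaks.
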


One major difference between the digital theory and classical homology is that the induced homomorphism $f_*$ is not always a digital homotopy invariant.

\begin{exl}
By Theorem \ref{Cnhomology}, the homology group $H_1(C_4)$ is isomorphic to $\Z$.
Because all maps on $C_4$ are homotopic, the identity map is homotopic to a constant map $c$. But $\id_*:H_1(C_4) \to H_1(C_4)$ is the identity homomorphism of $\Z$, while $c_*:H_1(C_4) \to H_1(C_4)$ is the trivial homomorphism. Thus $\id \simeq c$ but $\id_* \neq c_*$.
\end{exl}

The lack of a homotopy-invariant induced homorphism is a major deficiency in the homology theory of digital images. Lacking this homotopy invariance, the homology groups are not well-behaved with respect to typical topological constructions. For example two homotopy equivalent digital images may have different homology groups. As a consequence the digital Euler characteristic is not a digital homotopy type invariant.
Example 7.2 of \cite{ek13} shows that the Hurewicz theorem also fails: that is, that $H_1(X)$ may not be isomorphic to the abelianization of the fundamental group of $X$ as defined in \cite{boxe99}. 


The homology group in dimension zero is easy to predict:
\begin{thm}\label{simplicialdim0}
Let $X$ be any digital image with $d$ connected components. Then $H_0(X) \cong \Z^d$.
\end{thm}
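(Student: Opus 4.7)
The plan is to follow the classical computation of $H_0$ for a simplicial complex, suitably adapted. Since $\pd_0$ is trivial we have $Z_0(X) = C_0(X)$, the free abelian group on the points of $X$, so $H_0(X) = C_0(X)/B_0(X)$. The task reduces to (a) showing that two points in the same connected component determine the same class in $H_0(X)$, and (b) showing that points in distinct components determine linearly independent classes.

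For (a), first I would observe that whenever $x \adj y$ the element $\langle y\rangle - \langle x\rangle = \pd \langle x,y\rangle$ lies in $B_0(X)$, so $\langle x\rangle$ and $\langle y\rangle$ represent the same element of $H_0(X)$. Given any path $p:[0,n]_\Z \to X$ from $x$ to $y$, an easy induction on $n$ (using $p(i)\adjeq p(i+1)$ at each step; when $p(i)=p(i+1)$ the two $0$-simplices are literally equal) shows $[\langle x\rangle] = [\langle y\rangle]$ in $H_0(X)$. Let $X_1,\dots,X_d$ be the connected components and pick a basepoint $x_i\in X_i$ for each $i$; the previous sentence shows that the classes $[\langle x_1\rangle],\dots,[\langle x_d\rangle]$ generate $H_0(X)$.

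For (b), I would construct, for each component $X_i$, an augmentation homomorphism
\[ \epsilon_i: C_0(X) \to \Z, \qquad \epsilon_i\left(\sum_{x\in X} a_x \langle x\rangle\right) = \sum_{x\in X_i} a_x, \]
and verify that $\epsilon_i$ vanishes on $B_0(X)$. This is where the only real content sits: $B_0(X)$ is generated by boundaries $\pd\langle x,y\rangle = \langle y\rangle - \langle x\rangle$ where $x\adj y$ forces $x$ and $y$ to lie in the same component $X_j$, so $\epsilon_i(\pd\langle x,y\rangle)$ is either $1-1=0$ (if $j=i$) or $0-0=0$ (if $j\ne i$). (More generally $\pd\langle x_0,\dots,x_q\rangle$ involves vertices all in one component and has coefficient sum zero.) Thus each $\epsilon_i$ descends to a homomorphism $\bar\epsilon_i: H_0(X) \to \Z$, and $\bar\epsilon_i[\langle x_j\rangle] = \delta_{ij}$.

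Assembling the $\bar\epsilon_i$ into a single map $H_0(X) \to \Z^d$ gives a surjection (it hits each standard basis vector) whose image of the generating set $\{[\langle x_i\rangle]\}$ is a $\Z$-basis, so the $[\langle x_i\rangle]$ are linearly independent and the map is an isomorphism. This yields $H_0(X)\cong \Z^d$. The main obstacle is purely bookkeeping: one must be careful that $B_0(X)$ is indeed generated by the boundaries of $1$-simplices (which is immediate from the definition of $B_0(X) = \im\pd_1$) so that checking $\epsilon_i$ on these generators is enough; no deeper issue arises.
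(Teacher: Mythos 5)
Your proof is correct and follows the same approach as the paper, which simply asserts that ``two points are homologous if and only if they lie in the same connected component'' and leaves the verification to the reader. You have filled in exactly the details behind that assertion: paths give homologous basepoints within a component, and the component-wise augmentation maps $\epsilon_i$ (which kill $B_0(X)=\im\pd_1$ because adjacent points lie in a common component) separate the components.
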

\begin{proof}
The chain group $C_0(X)$ (which equals the group $Z_0(X)$ of 0-cycles) has basis given by the points of $X$. It is easy to see that two points $x,y\in Z_0(X)$ are homologous if and only if $x$ and $y$ are in the same connected component of $X$.
\end{proof}

The simplicial homology is also easy to compute when $X$ consists of finitely many isolated points, that is, points which are not adjacent to any other points.
\begin{thm}\label{simplicialisolated}
Let $X$ be any digital image consisting of $k$ isolated points. Then $H_0(X) \cong \Z^k$ and $H_q(X) = 0$ for $q>0$.
\end{thm}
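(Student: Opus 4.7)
The plan is to observe that the assumption of isolated points drastically collapses the chain complex, so both statements follow almost immediately from what has already been proved.

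First I would handle the higher-dimensional groups. Recall that a $q$-simplex in $X$ is a set of $q+1$ mutually adjacent points. Since each point of $X$ is isolated — that is, has no neighbors — there are no pairs of adjacent points, let alone larger cliques. Hence for every $q \geq 1$ the chain group $C_q(X)$ has an empty generating set and is therefore trivial. It follows immediately that $H_q(X) = Z_q(X)/B_q(X) = 0$ for all $q \geq 1$.

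Next I would address the zero-dimensional group. Since each isolated point forms its own maximal connected subset of $X$, the image $X$ has exactly $k$ connected components. Applying Theorem \ref{simplicialdim0} to $X$ with $d = k$ yields $H_0(X) \cong \Z^k$, which completes the proof.

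There is no real obstacle here; the statement is essentially a direct corollary of Theorem \ref{simplicialdim0} combined with the fact that higher chain groups vanish when no two points of $X$ are adjacent. The only thing to be careful about is explicitly noting that ``isolated'' forces $C_q(X) = 0$ for $q \geq 1$, rather than appealing to any more elaborate machinery.
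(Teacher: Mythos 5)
Your proof is correct and follows exactly the same route as the paper: the $H_0$ statement is deduced from Theorem \ref{simplicialdim0} since the $k$ isolated points form $k$ components, and the vanishing of $H_q(X)$ for $q>0$ follows because the absence of adjacencies forces $C_q(X)=0$. No issues.
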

\begin{proof}
The statement concerning $H_0(X)$ follows immediately from \ref{simplicialdim0}. Since $X$ has no adjacencies, it contains no $q$-simplices when $q>0$. Thus $H_q(X) = 0$ when $q>0$ as desired.
\end{proof}

\subsection{Singular homology}
Singular homology for digital images was defined by D.W. Lee in \cite{lee14}. We will review Lee's definitions, using some different notations to fit more cleanly with the other homology theories.

For any natural number $q$, let $\Delta^q$ be \emph{the standard $q$-simplex}, the digital image consisting of $q+1$ mutually adjacent points. Viewed as a graph, $\Delta^q$ is the complete graph of $q+1$ vertices. The points of $\Delta^q$ will be labeled and ordered as $\Delta^q = (e_0,\dots,e_q)$. 

\begin{defn}
Let $X$ be a digital image. A \emph{singular $q$-simplex} in $X$ is a continuous function
\[ \phi:\Delta^q \to X. \]
We will write such a singular $q$-simplex as the ordered list $[\phi(e_0),\dots, \phi(e_q)]$.

For any $q\ge 0$, the \emph{group of singular $q$-chains}, denoted $\v C_q(X)$, is the free abelian group whose basis is the set of all singular $q$-simplices of $X$. 
\end{defn}

The singular boundary operator $\pd_q:\v C_q(X) \to \v C_{q-1}(X)$ is defined as follows: 
\begin{equation}\label{singularboundary}
\pd_q [x_0,\dots,x_q] = \sum_{i=0}^q (-1)^i [x_0,\dots,\widehat{x_i},\dots, x_q], 
\end{equation}
where as usual $\widehat{x_i}$ denotes omission of the $i$th element. When $\phi$ is a $q$-simplex, $\pd_q\phi$ will be a singular $(q-1)$-chain. As before, we will often omit the subscript $q$. Note that we are using the same notation to denote the boundary operators in both simplicial and singular homology. In practice this will not cause confusion.

The singular $q$-simplex $[x_0,\dots,x_q]$ is very similar to the ordered $q$-simplex $\langle x_0,\dots,x_q\rangle$. The main difference algebraically is that, in the singular chain group, we do not identify permutations of the listings. For example we have $\langle x_0,x_1 \rangle = -\langle x_1,x_0\rangle$ in $C_1(X)$, but in $\v C_1(X)$ the basis elements $[x_0,x_1]$ and $[x_1,x_0]$ are linearly independent. We also will always have $\langle x,x\rangle = - \langle x,x\rangle = 0$ in $C_1(X)$, while $[x,x]$ will be nontrivial in $\v C_1(X)$.
This means that in particular $\v C_q(X)$ has nonzero elements which, when written as lists of points, include repetitions. Such elements are always 0 in $C_q(X)$ because of \eqref{permeq}.

Theorem 3.9 of \cite{lee14} shows that $\pd_{q-1} \circ \pd_{q} = 0$, and thus $(\v C_q(X),\pd_q)$ is a chain complex, and the \emph{dimension $q$ singular homology group} is defined as $\v H_q(X) = \v Z_q(X)/\v B_q(X)$.

If $f:X\to Y$ is continuous, then there is a homomorphism $f_{\#}:\v C_q(X) \to \v C_q(Y)$ defined on singular chains by $f_{\#}(\phi) = f \circ \phi$. This is easily shown to be a chain map, and thus we obtain the induced homomorphism on singular homology $f_{*}:\v H_q(X) \to \v H_q(Y)$.


We will require an analogue of Theorem \ref{simplicialdim0} for singular homology. The proof is the same as that of Theorem \ref{simplicialdim0}.
\begin{thm}\label{singulardim0}
Let $X$ be any digital image with $d$ connected components. Then $\v H_0(X) \cong \Z^d$.
\end{thm}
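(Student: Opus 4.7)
The plan is to mirror the (very brief) argument for Theorem \ref{simplicialdim0} in the singular setting. First I would identify $\v C_0(X)$: a singular $0$-simplex is a continuous map $\phi: \Delta^0 \to X$, and since $\Delta^0$ consists of a single point, such a map is just a choice of point in $X$. Hence $\v C_0(X)$ is the free abelian group on the points of $X$, and because the codomain of $\pd_0$ is trivial, $\v Z_0(X) = \v C_0(X)$.

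Next I would describe $\v B_0(X) = \im \pd_1$. A singular $1$-simplex $[x,y]$ is a continuous map $\Delta^1 \to X$, equivalently a pair $(x,y)$ with $x \adjeq y$, and $\pd_1[x,y] = [y] - [x]$. So $\v B_0(X)$ is generated by all differences $[y] - [x]$ with $x \adjeq y$. Telescoping along a $\kappa$-path in $X$ then shows $[x] - [y] \in \v B_0(X)$ whenever $x$ and $y$ lie in the same connected component of $X$.

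To verify that no further identifications occur, I would introduce a homomorphism $\epsilon: \v C_0(X) \to \Z^d$ sending each generator $[x]$ to the standard basis vector indexed by the connected component containing $x$. This map is surjective, and vanishes on $\v B_0(X)$ since the two endpoints of any singular $1$-simplex necessarily lie in the same component. Conversely, any chain in $\ker \epsilon$ has coefficients summing to zero on each component, hence can be rewritten as a sum of differences $[x] - [y]$ with $x, y$ in the same component — each of which lies in $\v B_0(X)$ by the previous step. Thus $\v B_0(X) = \ker \epsilon$, and the first isomorphism theorem gives $\v H_0(X) = \v C_0(X)/\v B_0(X) \cong \Z^d$.

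No serious obstacle arises, since the argument is formally identical to the simplicial case. The only point worth noting is that, unlike the simplicial theory where $\langle x, x \rangle = 0$, the singular chain group $\v C_1(X)$ contains nontrivial degenerate $1$-simplices $[x,x]$ — but these have zero boundary and so contribute nothing to $\v B_0(X)$, leaving the analysis unchanged.
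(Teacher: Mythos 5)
Your proposal is correct and follows essentially the same route as the paper, which simply observes that the argument for the simplicial case (two $0$-cycles are homologous iff their points lie in the same connected component) carries over verbatim to the singular setting. Your elaboration via the component-counting homomorphism $\epsilon$ and the remark that degenerate $1$-simplices $[x,x]$ have zero boundary just makes explicit what the paper leaves implicit.
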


Lee's work provides an analogue of Theorem \ref{simplicialisolated} for singular homology. The following is a consequence of  Theorems 3.16 and 3.20 of \cite{lee14}:
\begin{thm}\label{singularisolated}
Let $X$ be a digital image which consists of $k$ isolated points. Then $\v H_0(X) \cong \Z^k$ and $\v H_q(X) = 0$ for $q>0$.
\end{thm}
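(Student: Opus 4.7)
The plan is to reduce the theorem to a direct calculation. The statement $\v H_0(X) \cong \Z^k$ is immediate from Theorem \ref{singulardim0}, since isolated points means each point is its own connected component, so $X$ has $k$ components. The content of the theorem is therefore the vanishing of $\v H_q(X)$ for $q \ge 1$.

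The first step is to classify all singular $q$-simplices $\phi:\Delta^q \to X$ for $q \ge 1$. Since every two distinct vertices $e_i, e_j$ of $\Delta^q$ are adjacent, continuity forces $\phi(e_i) \adjeq \phi(e_j)$; but $X$ contains no adjacent pairs, so $\phi(e_i) = \phi(e_j)$. Hence every singular $q$-simplex for $q \ge 1$ is constant, and I can denote the constant singular $q$-simplex at $x$ by $\phi^x_q = [x,x,\dots,x]$. Thus $\v C_q(X) = \bigoplus_{x\in X} \Z\phi^x_q$ splits as a direct sum of the subcomplexes $\v C_*^x$ generated by the constant simplices at each $x$, and it suffices to compute $\v H_q$ of each summand.

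The second step is to compute the boundary within a single summand. By the definition in \eqref{singularboundary},
\[
\pd \phi^x_q = \sum_{i=0}^q (-1)^i [x,\dots,\widehat{x},\dots,x] = \left(\sum_{i=0}^q (-1)^i\right)\phi^x_{q-1},
\]
which equals $\phi^x_{q-1}$ if $q$ is even and $0$ if $q$ is odd. Hence for each $x$, the chain complex $\v C_*^x$ looks like $\cdots \to \Z \xrightarrow{1} \Z \xrightarrow{0} \Z \xrightarrow{1} \Z \xrightarrow{0} \Z \to 0$.

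The third step is to read off the homology. In each positive degree $q$, the kernel and image alternate between $0$ and $\Z$, and they coincide: for odd $q$, both $Z_q = \Z\phi^x_q$ and $B_q = \im \pd_{q+1} = \Z\phi^x_q$; for even $q \ge 2$, both $Z_q = 0$ and $B_q = 0$. Either way $\v H_q(\v C_*^x) = 0$ for $q \ge 1$. Taking the direct sum over $x \in X$ gives $\v H_q(X) = 0$ for $q \ge 1$, as desired. I do not anticipate a real obstacle here; the only subtle point is remembering that singular simplices allow repetition of vertices, so the constant simplices are honest non-zero generators (unlike in the simplicial theory), which is exactly what makes the alternating-boundary computation work out to give vanishing homology rather than mirroring the simplicial answer.
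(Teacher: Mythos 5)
Your proof is correct, but it takes a different route from the paper: the paper does not prove this statement directly at all, instead deriving it as a consequence of Theorems 3.16 and 3.20 of Lee's paper \cite{lee14} (essentially the computation of the singular homology of a one-point image together with Lee's results handling disjoint components), whereas you give a self-contained computation. Your argument --- every singular $q$-simplex with $q\ge 1$ must be constant because $\Delta^q$ is a complete graph and $X$ has no adjacencies, the complex splits as a direct sum of per-point subcomplexes $\cdots \to \Z \to \Z \to \Z \to 0$, and the boundary of the constant simplex is $\bigl(\sum_{i=0}^q(-1)^i\bigr)\phi^x_{q-1}$, alternating between the identity and zero --- is exactly the classical ``homology of a point'' calculation transplanted to the digital singular theory, and all the steps check out (including the subtle point you flag, that $[x,\dots,x]$ is a genuine nonzero generator in $\v C_q(X)$, unlike in the simplicial theory). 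What the paper's citation buys is brevity; what your computation buys is self-containedness and a transparent explanation of why the singular answer for isolated points is trivial in positive degrees, so it would serve perfectly well as a standalone proof of the theorem.
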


\subsection{Cubical homology}
Cubical homology was introduced by Jamil \& Ali in \cite{ja19}, with definitions mimicking the classical cubical homology as presented in \cite{mass91}. We will review the definition and basic results.

Let $I = [0,1]_\Z = \{0,1\}$, and we consider $I^n \subset \Z^n$ as a digital image with $c_1$-adjacency for each $n\ge 1$. When $n=0$, we define $I^0$ to be a single point. 

\begin{defn}
Let $(X,\kappa)$ be a digital image. Then a \emph{$q$-cube} in $X$ is a $(c_1,\kappa)$-continuous function
\[ \sigma:I^q \to X. \]
A $q$-cube is \emph{degenerate} if there is some $i$ such that the function $\sigma(t_1,\dots,t_q)$ does not depend on the coordinate $t_i$. Let $Q_q(X)$ be the free abelian group whose basis is the set of all $q$-cubes in $X$, and let $D_q(X)$ be the subgroup generated by the degenerate $q$-cubes. Then the \emph{group of cubical $q$-chains}, denoted $\bar C_q(X)$, is the quotient $\bar C_q(X) = Q_q(X) / D_q(X)$.
\end{defn}

The  cubical boundary operator is defined in terms of cubical \emph{face} operators. For some $q$-cube $\sigma$ and some $i \in \{1,\dots,q\}$, define ($q-1$)-cubes $A_i\sigma$ and $B_i\sigma$ as:
\begin{align*}
(A_i\sigma)(t_1,\dots,t_{q-1}) &= \sigma(t_1,\dots, t_{i-1},0,t_i,\dots,t_{q-1}), \\
(B_i\sigma)(t_1,\dots,t_{q-1}) &= \sigma(t_1,\dots, t_{i-1},1,t_i,\dots,t_{q-1}), 
\end{align*}
These $A_i$ and $B_i$ give the ``front face'' and ``back face'' of the cube in each of its $q$ dimensions. 

The boundary operator $\pd_q:\bar C_q(X) \to \bar C_{q-1}(X)$ is defined on cubes by the formula:
\begin{equation}\label{cubepd}
\pd_q(\sigma) = \sum_{i=1}^q (-1)^i (A_i\sigma - B_i\sigma), 
\end{equation}
and extended to chains by linearity. We will sometimes omit the subscript $q$.

A routine calculation shows that $\pd_{q-1}\circ \pd_{q} = 0$, and thus $(\bar C_q(X), \pd_q)$ is a chain complex. We obtain cycle and boundary groups $\bar Z_q(X)$ and $\bar B_q(X)$ and the homology groups $\bar H_q(X) = \bar Z_q(X)/\bar B_q(X)$.

Exactly as in the singular theory, if $f:X\to Y$ is continuous, then $f_{\#}:\bar C_q(X) \to \bar C_q(Y)$ is defined on cubical chains by $f_{\#}(\sigma) = f \circ \sigma$, and this defines the induced homomorphism on cubical  homology $f_{*}:\bar H_q(X) \to \bar H_q(Y)$.

The most important property of cubical homology which distinguishes it from simplicial homology is the following, which is Theorem 3.7 of \cite{ja19}:
\begin{thm}[{\cite[Theorem 3.7]{ja19}}]
Let $X$ and $Y$ be any digital images and $f,g:X \to Y$ with induced homomorphisms $f_*,g_*:\bar H_q(X) \to \bar H_q(X)$. If $f\simeq g$, then $f_* = g_*$.
\end{thm}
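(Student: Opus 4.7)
The plan is to build a chain homotopy between $f_\#$ and $g_\#$ via a \emph{prism operator}, imitating the classical cubical proof (as in Massey).

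First I would reduce to the case where $f$ and $g$ are homotopic in one step. Since homotopy is generated by one-step homotopies (any homotopy $H : X \times [0,k]_\Z \to Y$ factors through a chain of one-step homotopies via the restrictions $H|_{X \times [t,t+1]_\Z}$), and since $f_* = g_*$ is a transitive relation, it suffices to assume there is $H : X \times [0,1]_\Z \to Y$ with $H(x,0)=f(x)$ and $H(x,1)=g(x)$.

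Next I would construct the prism operator $P : Q_q(X) \to Q_{q+1}(Y)$ on generators by
\[ (P\sigma)(s_1,\dots,s_{q+1}) = H(\sigma(s_1,\dots,s_q),\, s_{q+1}). \]
I would then verify three things. (a) $P\sigma$ is a legitimate $(q{+}1)$-cube, i.e.\ $(c_1,\lambda)$-continuous. Two $c_1$-adjacent points of $I^{q+1}$ differ by $1$ in exactly one coordinate, splitting into the case where the differing coordinate lies in the first $q$ slots (use continuity of $\sigma$ to get $\sigma(\cdot)\adjeq\sigma(\cdot)$, with the last coordinate equal) and the case where only $s_{q+1}$ differs (then the $\sigma$-values are equal). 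In either case the two images $(\sigma(\cdot),s_{q+1})$ are $\NP_1(\kappa,c_1)$-adjacent-or-equal in $X\times [0,1]_\Z$, and Theorem \ref{np1thm} applied to $H$ finishes the check. (b) $P$ descends to $\bar C_q(X) \to \bar C_{q+1}(Y)$: if $\sigma$ does not depend on $s_i$ for some $i\le q$, then neither does $P\sigma$, so $P$ sends $D_q(X)$ into $D_{q+1}(Y)$. (c) The chain-homotopy identity holds. Computing $A_i(P\sigma)$ and $B_i(P\sigma)$ directly from the definitions, one gets $A_i(P\sigma) = P(A_i\sigma)$ and $B_i(P\sigma) = P(B_i\sigma)$ for $i \le q$, while $A_{q+1}(P\sigma) = f\circ\sigma = f_\#\sigma$ and $B_{q+1}(P\sigma) = g\circ\sigma = g_\#\sigma$. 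Expanding the defining formula \eqref{cubepd} yields
\[ \pd_{q+1}(P\sigma) \;=\; P(\pd_q\sigma) \;+\; (-1)^{q+1}\bigl(f_\#\sigma - g_\#\sigma\bigr). \]
Thus, after absorbing the $(-1)^{q+1}$ into a redefined $\tilde P_q := (-1)^{q} P_q$, we obtain $\pd\,\tilde P + \tilde P\,\pd = g_\# - f_\#$, a genuine chain homotopy.

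Finally, standard homological algebra gives that a chain homotopy between $f_\#$ and $g_\#$ forces $f_{*,q} = g_{*,q}$ on $\bar H_q(X)\to\bar H_q(Y)$.

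The routine sign bookkeeping in step (c) is the most finicky part, but the real point of genuine content—and the only place where the digital setting enters nontrivially—is the continuity verification in (a). Everything else is a formal transcription of the classical cubical argument, with Theorem \ref{np1thm} replacing classical product-topology continuity of the homotopy.
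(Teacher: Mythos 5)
This theorem is quoted from \cite{ja19} and the paper gives no proof of its own, so there is nothing internal to compare against; judged on its own, your argument is correct and is the standard prism-operator proof transcribed to the digital cubical setting. The reduction to one-step homotopies, the continuity check for $P\sigma$ via the $\NP_1$-characterization of homotopy (Theorem \ref{np1thm}), the observation that $P$ preserves degeneracy and hence descends to $\bar C_q$, and the sign bookkeeping in the chain-homotopy identity all check out. You correctly identify the continuity of $P\sigma$ as the only point where the digital setting genuinely enters; this is also the step that breaks for simplicial homology (where the paper must instead require \emph{strong} homotopy, cf.\ Theorem \ref{prism}), which is exactly why cubical homology enjoys the stronger invariance property.
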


Immediate corollaries include:
\begin{cor}[{\cite[Corollary 3.8]{ja19}}]\label{htpequivcube}
If $X$ and $Y$ are homotopy equivalent, then $\bar H_q(X) \cong \bar H_q(Y)$ for all $q$.
\end{cor}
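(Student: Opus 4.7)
The plan is to deduce this directly from the homotopy invariance theorem just stated, together with the functoriality of the induced homomorphism on cubical homology. Functoriality for cubical homology follows from the general fact stated earlier in the excerpt, namely that $(f\circ g)_{*,q} = f_{*,q} \circ g_{*,q}$ and $(\id_X)_{*,q} = \id_{H_q(X)}$ for any chain maps; we just need to observe that the chain map $f_\#(\sigma) = f\circ \sigma$ defined for cubical chains satisfies $(f\circ g)_\# = f_\# \circ g_\#$ and $(\id_X)_\# = \id_{\bar C_q(X)}$, which is immediate from the definition.

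Given these two tools, here is how I would proceed. Since $X$ and $Y$ are homotopy equivalent, choose continuous maps $f\colon X\to Y$ and $g\colon Y\to X$ with $g\circ f \simeq \id_X$ and $f\circ g \simeq \id_Y$. Apply the induced homomorphism on $\bar H_q$ to both sides of each homotopy. By functoriality we have $(g\circ f)_* = g_* \circ f_*$ and $(f\circ g)_* = f_* \circ g_*$, and by the homotopy invariance theorem these equal $(\id_X)_*$ and $(\id_Y)_*$ respectively, which are the identity homomorphisms on $\bar H_q(X)$ and $\bar H_q(Y)$.

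Thus $g_{*,q}\circ f_{*,q} = \id_{\bar H_q(X)}$ and $f_{*,q}\circ g_{*,q} = \id_{\bar H_q(Y)}$, so $f_{*,q}$ and $g_{*,q}$ are mutually inverse group homomorphisms. In particular $f_{*,q}\colon \bar H_q(X)\to \bar H_q(Y)$ is an isomorphism for every $q$, which is exactly the claim.

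There is no real obstacle here: the entire content of the corollary is packaged into the homotopy invariance theorem, and what remains is the purely formal ``homotopy equivalence plus functorial, homotopy-invariant induced maps gives isomorphism'' argument familiar from classical algebraic topology. The only thing worth mentioning, if one wants to be thorough, is to check that the induced chain map really is functorial on $\bar C_q$ (not just on $Q_q$), which is clear because $f_\#$ sends degenerate cubes to degenerate cubes and so passes to the quotient $\bar C_q = Q_q/D_q$ in a way compatible with composition.
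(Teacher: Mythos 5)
Your argument is correct and is exactly the standard route the paper has in mind: it states this as an immediate corollary of the homotopy-invariance theorem, relying on precisely the functoriality-plus-invariance argument you give. Nothing further is needed.
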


\begin{cor}[{\cite[Example 3.9]{ja19}}]\label{contractiblecube}
If $X$ is contractible (i.e. $X$ is homotopy equivalent to a point), then 
\[ \bar H_q(X) = \begin{cases} \Z &\text{ if } q=0, \\
0 &\text{ if } q\neq 0.\end{cases} \]
\end{cor}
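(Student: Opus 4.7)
The plan is to reduce everything to a computation of the cubical homology of a single point, and then chase through the definitions for that trivial case. By Corollary \ref{htpequivcube}, since $X$ is homotopy equivalent to a one-point digital image $\{*\}$, we have $\bar H_q(X) \cong \bar H_q(\{*\})$ for every $q$, so it is enough to show that $\bar H_0(\{*\}) = \Z$ and $\bar H_q(\{*\}) = 0$ for $q \neq 0$.

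For the point-computation, I would first analyze the $q$-cubes $\sigma : I^q \to \{*\}$. For each $q$ there is exactly one such function (the only constant map). When $q \ge 1$, this unique $q$-cube does not depend on any of its coordinates $t_1, \dots, t_q$, so it is degenerate; hence $Q_q(\{*\}) = D_q(\{*\})$ and $\bar C_q(\{*\}) = 0$ for all $q \ge 1$. When $q = 0$, the cube $I^0$ is a single point, so a $0$-cube is just a choice of point in $\{*\}$, giving one basis element; and the degeneracy condition is vacuous for $q = 0$ (there is no coordinate $t_i$ on which it could fail to depend), so $\bar C_0(\{*\}) = Q_0(\{*\}) \cong \Z$.

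With the chain groups in hand, the boundary homomorphisms and homology groups fall out immediately. For $q \ge 1$ both $\bar C_q(\{*\})$ and $\bar C_{q+1}(\{*\})$ are trivial (or, for $q=1$, $\bar C_1(\{*\}) = 0$), so $\bar H_q(\{*\}) = 0$. For $q = 0$, the map $\partial_0$ lands in the zero group $\bar C_{-1}(\{*\}) = 0$ and the image of $\partial_1$ is zero because $\bar C_1(\{*\}) = 0$, so $\bar H_0(\{*\}) = \bar C_0(\{*\}) \cong \Z$.

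The only subtle point — and thus what I would flag as the main thing to get right — is the edge case $q = 0$ in the degeneracy definition. One must confirm that the sole $0$-cube at a point is \emph{not} declared degenerate (since the condition ``there exists $i$ such that $\sigma$ is independent of $t_i$'' is vacuously false when there are no coordinates), otherwise the computation would collapse $\bar H_0$ to zero as well. Everything else is a direct unpacking of the definitions, so no further calculation is needed.
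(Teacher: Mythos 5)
Your proposal is correct and follows essentially the same route the paper intends: the paper presents this as an immediate consequence of Corollary \ref{htpequivcube} together with the cubical homology of a one-point image (which is also what Theorem \ref{cubicalisolated} gives for $k=1$), and your direct computation of $\bar C_q(\{*\})$, including the observation that the unique $0$-cube is not degenerate while every $q$-cube with $q\ge 1$ is, is exactly the right way to finish.
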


Jamil \& Ali also prove a Hurewicz theorem, that $\bar H_1(X)$ is isomorphic to the abelianization of the fundamental group of $X$. 
They also prove results concerning connected components and single points. The following theorems follow from \cite[Propositions 3.1, 3.2]{ja19} and Corollary \ref{contractiblecube}.
\begin{thm}\label{cubicaldim0}
Let $X$ be a digital image with $d$ connected components. Then $\bar H_0(X) \cong \Z^d$.
\end{thm}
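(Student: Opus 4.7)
The plan is to mimic the proof of Theorem \ref{simplicialdim0} in the cubical setting. First I would unpack $\bar C_0(X)$ concretely. Since $I^0$ is a single point, a 0-cube $\sigma:I^0 \to X$ is nothing more than a choice of a point of $X$, and there are no coordinates, so there are no degenerate 0-cubes. Hence $\bar C_0(X)$ is the free abelian group on the points of $X$, and since $\pd_0$ is the zero map, $\bar Z_0(X) = \bar C_0(X)$.

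Next I would describe $\bar B_0(X)$ using the 1-cubes. A 1-cube is a $(c_1,\kappa)$-continuous map $\sigma: I^1 = \{0,1\} \to X$, which amounts to a pair of points $\sigma(0),\sigma(1)\in X$ with $\sigma(0)\adjeq \sigma(1)$. From \eqref{cubepd}, $\pd_1\sigma = [\sigma(1)] - [\sigma(0)]$, so $\bar B_0(X)$ is the subgroup of $\bar C_0(X)$ generated by the differences $[y]-[x]$ with $x\adjeq y$.

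Then I would define the candidate isomorphism. Let the connected components of $X$ be $X_1,\dots,X_d$ and define $\phi:\bar C_0(X) \to \Z^d$ on the basis by $\phi([x]) = e_i$ whenever $x\in X_i$, where $e_1,\dots,e_d$ is the standard basis of $\Z^d$. If $x\adjeq y$ then $x$ and $y$ lie in the same component, so $\phi([y]-[x])=0$; hence $\phi$ vanishes on $\bar B_0(X)$ and descends to $\bar\phi:\bar H_0(X) \to \Z^d$. Surjectivity is immediate by picking one point in each component.

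For injectivity, which is the only step requiring actual work, I would fix a basepoint $p_i$ in each $X_i$ and show that $[x]-[p_i]\in \bar B_0(X)$ for every $x\in X_i$. Connectedness of $X_i$ supplies a path $x=x_0,x_1,\dots,x_m=p_i$ with $x_j\adj x_{j+1}$; the 1-cubes $\sigma_j:I^1\to X$ defined by $\sigma_j(0)=x_j$, $\sigma_j(1)=x_{j+1}$ are continuous, and $\sum_{j=0}^{m-1}\pd\sigma_j = [p_i]-[x]$ telescopes, witnessing the claim. Consequently any chain $\alpha = \sum n_x [x]$ is homologous to $\sum_{i=1}^d \bigl(\sum_{x\in X_i} n_x\bigr)[p_i]$; if $\bar\phi$ annihilates the class of $\alpha$, each of these coefficients vanishes and $\alpha\in \bar B_0(X)$. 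I do not expect any real obstacle: the argument is essentially formal, and the only thing to check carefully is that the path-building step produces legitimate (non-degenerate or degenerate, it makes no difference) 1-cubes in $\bar C_1(X)$, which is clear.
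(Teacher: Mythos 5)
Your proof is correct and complete: the identification of $\bar C_0(X)$ with the free abelian group on the points of $X$ (no degenerate $0$-cubes exist), the description of $\bar B_0(X)$ as the subgroup generated by the differences $[y]-[x]$ with $x \adjeq y$, and the telescoping path argument for injectivity all go through exactly as you describe; you are also right that degenerate $1$-cubes are harmless, since they vanish in $\bar C_1(X)$ and have zero boundary in any case. It is, however, a genuinely different route from the paper, which does not prove this statement directly at all: there Theorem \ref{cubicaldim0} is obtained by citation, as a consequence of Propositions 3.1 and 3.2 of Jamil and Ali \cite{ja19} together with Corollary \ref{contractiblecube}. Your argument is essentially the one the paper itself uses for the simplicial and $c_1$-cubical analogues (Theorems \ref{simplicialdim0} and \ref{c1dim0}), transplanted to the cubical chain complex. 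What your approach buys is self-containedness and elementarity: it needs nothing beyond the definition of the cubical chain groups and boundary operator, whereas the paper's route leans on external results (including, implicitly, homotopy-invariance input behind Corollary \ref{contractiblecube}); what the citation buys is brevity and consistency with the source \cite{ja19} where the cubical theory was introduced.
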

\begin{thm}\label{cubicalisolated}
Let $X$ be a digital image consisting of $k$ isolated points. Then $\bar H_0(X) \cong \Z^k$ and $\bar H_q(X) \cong 0$ for $q>0$.
\end{thm}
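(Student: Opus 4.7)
The statement $\bar H_0(X) \cong \Z^k$ is immediate from Theorem \ref{cubicaldim0}, since isolated points form $k$ distinct connected components. The real content is showing $\bar H_q(X) = 0$ for $q \geq 1$, and my plan is to show the much stronger statement that the chain group $\bar C_q(X)$ itself is trivial in those dimensions.

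The key observation is that $I^q = \{0,1\}^q$ with $c_1$-adjacency is connected for every $q \geq 1$: any two vertices can be joined by flipping one coordinate at a time, giving a $c_1$-path. Now let $\sigma : I^q \to X$ be any $q$-cube. Since $\sigma$ is $(c_1,\kappa)$-continuous and $I^q$ is connected, the image $\sigma(I^q)$ lies inside a single connected component of $X$. But the hypothesis that $X$ consists of $k$ isolated points means every connected component of $X$ is a single point, so $\sigma$ must be a constant map.

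A constant map $\sigma: I^q \to X$ with $q \geq 1$ trivially fails to depend on any of its coordinates $t_1, \dots, t_q$, so by definition it is degenerate. Therefore every generator of $Q_q(X)$ lies in $D_q(X)$, which gives $Q_q(X) = D_q(X)$ and hence $\bar C_q(X) = Q_q(X)/D_q(X) = 0$. With the chain group trivial, both $\bar Z_q(X)$ and $\bar B_q(X)$ are trivial, and so $\bar H_q(X) = 0$ for all $q \geq 1$, as required.

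There is essentially no obstacle here: the argument reduces to the two observations that $I^q$ is $c_1$-connected and that constant maps are degenerate by definition. This result will later be useful for contrasting with the simplicial case (which required the separate Theorem \ref{simplicialisolated}), and it fits the same pattern as the singular result in Theorem \ref{singularisolated}.
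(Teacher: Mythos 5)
Your proof is correct, but it takes a different route from the paper. The paper does not argue this theorem directly at all: it derives it from \cite[Propositions 3.1, 3.2]{ja19} (which reduce the computation to connected components) together with Corollary \ref{contractiblecube}, i.e.\ it uses homotopy invariance applied to each single-point component, which is contractible. You instead work at the chain level: since $I^q$ is $c_1$-connected for $q\ge 1$ and continuous maps preserve connectedness, every $q$-cube $\sigma:I^q\to X$ lands in a single component, hence is constant, hence degenerate, so $Q_q(X)=D_q(X)$ and $\bar C_q(X)=0$. This is more elementary (no appeal to homotopy invariance or to the external propositions of \cite{ja19}) and proves something strictly stronger, namely that the chain groups themselves vanish in positive dimensions rather than just the homology. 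It also has the virtue of running exactly parallel to the arguments the paper does spell out for the simplicial and $c_1$-cubical analogues (Theorems \ref{simplicialisolated} and \ref{c1isolated}), where the positive-dimensional chain groups are likewise shown to be trivial. The only thing the paper's citation-based route buys is brevity; your argument is self-contained and arguably the more natural proof.
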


\section{A new cubical homology theory for images with $c_1$-adjacency}\label{c1sec}
Ege \& Karaca in \cite{ke12} describe another type of cubical homology theory based on classical constructions from \cite{kmm03}. Their construction is not generally well-defined for any digital image, but only a digital image which is a ``cubical set'', that is, a finite union of ``elementary cubes.'' We will describe the construction, sometimes using different terminology that is more convenient for making comparisons with the other theories in this paper.

Ege \& Karaca's focus on ``cubical sets'' requires that the digital image $X$ be a subset of $\Z^n$, and that we always use $c_1$ as the adjacency relation. This is a significant restriction, but still allows many useful examples and results.

All of the following definitions appear in \cite{ke12}: an \emph{elementary interval} is a set of the form $[a,a+1]_\Z = \{a, a+1\}$ or $[a,a]_\Z = \{a\}$. An elementary interval of 1 point is called \emph{degenerate}, and one of 2 points is called \emph{nondegenerate}. An \emph{elementary cube} is any set:
\[ Q = J_1\times \dots \times J_q \subset \Z^q \]
where each $J_i$ is an elementary interval. The \emph{dimension} of $Q$ is the number of nondegenerate factors. An elementary cube of dimension $q$ will be called an \emph{elementary $q$-cube}.

When $X\subset \Z^n$ is a digital image with $c_1$-adjacency, it has a unique maximal expression as a union of elementary cubes. For each $q\ge 0$, let $\bar C^{c_1}_q(X)$ be the free abelian group generated by the set of all elementary $q$-cubes in $X$. 

We will give a definition for a boundary operator which differs from the one used in \cite{ke12}, but is rephrased to more closely resemble the boundary operator from the cubical theory.

Define face operators $A_i$ and $B_i$ as follows: for an elementary cube $Q = J_1\times \dots \times J_n$, let:
\begin{align*}
A_iQ &= J_1 \times \dots \times J_{i-1} \times \{ \min J_i \} \times J_{i+1} \times \dots \times J_n, \\
B_iQ &= J_1 \times \dots \times J_{i-1} \times \{ \max J_i \} \times J_{i+1} \times \dots \times J_n.
\end{align*}
Note that when $J_i$ is a degenerate interval, we have $A_iQ = B_iQ$. When $J_i$ is nondegenerate, $A_iQ$ and $B_iQ$ are distinct elementary cubes of dimension one less than the dimension of $Q$.

Now we define the boundary operator: given an elementary $q$-cube $Q = J_1 \times \dots \times J_n$, let $(j_1,\dots,j_q)$ be the sequence of indices for which $J_{j_i}$ is nondegenerate. Then we define:
\[ \pd_q Q = \sum_{i=1}^q (-1)^i (A_{j_i}Q - B_{j_i}Q). \]

It can be verified that $\pd_{q-1} \circ \pd_q = 0$, and thus $(\bar C^{c_1}_q(X), \pd_q)$ is a chain complex. The homology groups are $\bar H^{c_1}_q(X) = \bar Z^{c_1}_q(X) / \bar B_q(X)$.

We immediately have $c_1$ analogues of Theorems \ref{cubicaldim0} and \ref{cubicalisolated}.
\begin{thm}\label{c1dim0}
Let $X\subseteq \Z^n$ be a digital image with $c_1$-adjacency having $d$ components. Then $\bar H^{c_1}_0(X) \cong \Z^d$.
\end{thm}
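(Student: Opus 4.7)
The plan is to follow exactly the template used for Theorem \ref{simplicialdim0}: identify the chain group in dimension 0 with the free abelian group on the points of $X$, identify the image of $\pd_1$ as the subgroup generated by differences of $c_1$-adjacent points, and observe that the quotient is free abelian on the connected components.

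First I would unwind the definitions in dimension zero. An elementary $0$-cube is a product $J_1 \times \dots \times J_n$ in which every $J_i$ is degenerate, i.e.\ a single point. Hence the elementary $0$-cubes of $X$ are precisely the points of $X$, and $\bar C^{c_1}_0(X)$ is the free abelian group on the underlying set of $X$. Since $\pd_0$ is an empty sum, $\bar Z^{c_1}_0(X) = \bar C^{c_1}_0(X)$.

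Next I would analyze $\bar B^{c_1}_0(X)$. An elementary $1$-cube $Q$ in $X$ has a unique nondegenerate factor $J_{j_1} = [a,a+1]_\Z$; its two faces $A_{j_1}Q$ and $B_{j_1}Q$ are precisely the two endpoints $x,y \in X$ with $x \adj_{c_1} y$ corresponding to $J_{j_1}$. By the boundary formula,
\[ \pd_1 Q = -(A_{j_1}Q - B_{j_1}Q) = y - x. \]
Conversely, every $c_1$-adjacent pair in $X$ arises this way, so $\bar B^{c_1}_0(X)$ is exactly the subgroup of $\bar C^{c_1}_0(X)$ generated by all differences $y - x$ with $x \adj_{c_1} y$ in $X$.

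Finally I would verify that in the quotient $\bar H^{c_1}_0(X) = \bar Z^{c_1}_0(X)/\bar B^{c_1}_0(X)$, two points $x, y \in X$ represent the same class iff they lie in the same connected component: one direction follows by telescoping along a $c_1$-path $x = p_0, p_1, \dots, p_k = y$, writing $y - x = \sum_{i=1}^k (p_i - p_{i-1}) \in \bar B^{c_1}_0(X)$; the other direction follows because the component partition defines a well-defined homomorphism $\bar C^{c_1}_0(X) \to \Z^d$ sending each point to the basis vector of its component, which vanishes on all generators of $\bar B^{c_1}_0(X)$. Picking one representative point from each of the $d$ components yields a basis, so $\bar H^{c_1}_0(X) \cong \Z^d$. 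There is no real obstacle here; the proof is formally identical to the simplicial case, the only substantive input being the simple description of the boundary of an elementary $1$-cube as the difference of its two endpoints.
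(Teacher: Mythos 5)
Your proposal is correct and follows the same approach as the paper's proof, which simply notes that two points are homologous in $\bar H^{c_1}_0(X)$ iff a path connects them (the path's $1$-chain having the two points as its boundary). You have merely filled in the details the paper leaves implicit — the identification of elementary $0$- and $1$-cubes, the sign computation $\pd_1 Q = y - x$, and the component-counting homomorphism establishing the converse direction — all of which check out.
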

\begin{proof}
The chain group $\bar C^{c_1}_0(X)$ is generated by the points of $X$, and it is easy to see that two such points are homologous in $\bar H^{c_1}_0(X)$ if and only if there is a path connecting them. (The two points will be the boundary of the chain formed by the path.) Thus $\bar H^{c_1}_0(X)$ has a generator for each component of $X$.
\end{proof}

\begin{thm}\label{c1isolated}
Let $X\subseteq \Z^n$ be a digital image with $c_1$-adjacency which consists of a set of $k$ isolated points. Then $\bar H^{c_1}_0(X) \cong \Z^k$ and $\bar H^{c_1}_q(X) \cong 0$ for $q>0$.
\end{thm}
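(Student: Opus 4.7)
The plan is to split the claim into its two parts and dispatch each using the preceding theorem together with a direct inspection of the chain groups.

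For the statement $\bar H^{c_1}_0(X) \cong \Z^k$, I would simply invoke Theorem \ref{c1dim0}: a set of $k$ isolated points has $k$ connected components (each point being its own component, since no point is adjacent to any other), so the previous result immediately yields $\bar H^{c_1}_0(X) \cong \Z^k$.

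For the vanishing in positive degrees, I would argue that the chain group $\bar C^{c_1}_q(X)$ itself is trivial whenever $q > 0$. The key observation is that any elementary $q$-cube $Q = J_1 \times \dots \times J_n$ with $q \ge 1$ has at least one nondegenerate factor $J_{j_i} = [a, a+1]_\Z$, and hence contains two $c_1$-adjacent points of $\Z^n$ (for instance, the two points obtained by choosing either endpoint of that nondegenerate factor and any fixed choice in the other coordinates). If such a $Q$ were to appear as a generator of $\bar C^{c_1}_q(X)$, then $Q \subseteq X$ would force $X$ to contain two $c_1$-adjacent points, contradicting the assumption that every point of $X$ is isolated. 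Therefore $X$ contains no elementary $q$-cube for $q \ge 1$, so $\bar C^{c_1}_q(X) = 0$ and consequently $\bar H^{c_1}_q(X) = 0$ for $q > 0$.

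There is no real obstacle here; the only subtle point is making sure the reader sees that ``isolated'' in the hypothesis rules out the existence of any nondegenerate elementary cube inside $X$. This is immediate from the definition of $c_1$-adjacency on $\Z^n$ and the way elementary cubes are built as products of elementary intervals, so the proof amounts to a one-line reduction to Theorem \ref{c1dim0} plus a one-line inspection of the generators of $\bar C^{c_1}_q(X)$.
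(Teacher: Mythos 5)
Your proposal is correct and follows exactly the paper's argument: the $q=0$ case is reduced to Theorem \ref{c1dim0}, and the positive-degree case is handled by observing that an isolated-point image contains no nondegenerate elementary cubes, so $\bar C^{c_1}_q(X)=0$ for $q>0$. Your added justification that a nondegenerate factor would force two $c_1$-adjacent points in $X$ is a correct elaboration of the step the paper leaves implicit.
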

\begin{proof}
The first part follows immediately from Theorem \ref{c1dim0}. For the second part, observe that if $X$ consists only of isolated points, then the chain group $\bar C^{c_1}_q(X)$ is trivial for all $q>0$. 
\end{proof}

Karaca \& Ege's presentation in \cite{ke12} is different from our $c_1$-cubical theory in some important ways. The theory in \cite{ke12} starts with some digital image $(X,\kappa)$ endowed with some specified cubical structure. Then based on this structure, a homology theory is defined. The work in \cite{ke12} gives no general system for defining a cubical structure on $X$, and thus the resulting homology groups are not intrinsic to the digital image $(X,\kappa)$, but rather depend on the choice of cubical structure. 

For example the digital image $X = [0,1]_\Z^3 \subset \Z^3$ is considered. There is an obvious cubical structure on $X$ as a single elementary $3$-cube together with its faces, and we would expect the cubical homology to be $\Z$ in dimension 0 and trivial in all other dimensions. But instead the calculation in \cite[Theorem 4.5]{ke12} gives $\Z$ in dimensions 0 and 2 and trivial in all other dimensions. This is because the calculation in that theorem (without explicit mention) uses the cubical structure consisting of six $2$-cubes together with their faces, but without the ``solid'' 3-cube. We will see in Example \ref{I3exl} that $\bar H_q^{c_1}(X)$ is indeed $\Z$ in dimension 0, and trivial in other dimensions.

The author has implemented an algorithm with the free mathematical package SageMath to compute the $c_1$-cubical homology groups of any digital image. Source code is available for experimentation at the author's website.\footnote{\url{http://faculty.fairfield.edu/cstaecker}}

The induced homomorphism in this $c_1$-cubical homology theory is nontrivial to develop (there was no effort to define the concept in \cite{ke12}). Given two digital images $X$ and $Y$, both with $c_1$-adjacency, and some continuous $f:X\to Y$, we wish to define an induced homomorphism $\bar f_\#: \bar C_q^{c_1}(X) \to \bar C_q^{c_1}(Y)$ which is a chain map.

Given an elementary $q$-cube $Q \subset X$, we say \emph{$f$ is an embedding on $Q$} if $f(Q)$ is an elementary $q$-cube in $Y$. In this case let $\epsilon_{f,Q} \in \{-1,1\}$ be the orientation with with $f$ maps $Q$ onto $f(Q)$. (This orientation could be defined as the determinant of the affine linear map describing $f$'s restriction to $Q$.) When $f$ is not an embedding on $Q$, we let $\epsilon_{f,Q} = 0$. 

Then we define $\bar f_\#(Q) = \epsilon_{f,Q} (f(Q))$, where the right side is interpreted as the coefficient $\epsilon_{f,Q}$ times the elementary $q$-cube $f(Q) \subset Y$. Extending linearly gives the induced homomorphism $\bar f_\#: \bar C_q^{c_1}(X) \to \bar C_q^{c_1}(Y)$. 

Now to obtain an induced homomorphism in homology, we must show that $\bar f_\#$ is a chain map. We have been unable to prove this analytically, although in low dimensions we can show that $\bar f_\#$ is a chain map by computer enumerations.

\begin{thm}\label{c1chainmap}
Let $X\subset \Z^n$ and $Y\subset \Z^m$ be digital images with $c_1$-adjacency and $n\le 4$, and let $f:X\to Y$ be continuous. Then $f_\#: C^{c_1}_q(X) \to C^{c_1}_q(Y)$ is a chain map.
\end{thm}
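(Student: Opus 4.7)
The plan is to verify the chain map condition $\pd_q \circ \bar f_\# = \bar f_\# \circ \pd_q$ by checking it on each elementary $q$-cube $Q \subset X$ individually, since both maps are linear and elementary cubes form a basis of $\bar C^{c_1}_q(X)$. For each such $Q$, both $\pd_q \bar f_\#(Q)$ and $\bar f_\#(\pd_q Q)$ depend only on the $2^q$ values of $f$ at the vertices of $Q$, so the equation is a purely local statement that reduces to a finite case check when $q$ is bounded.

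The argument naturally splits into two cases. In the easy case, $f$ is an embedding on $Q$, so $\bar f_\#(Q) = \epsilon_{f,Q}\, f(Q)$ is (up to sign) an elementary $q$-cube in $Y$. Since $f|_Q$ is a bijection onto this cube, it restricts to an embedding on each face $A_{j_i}Q$ and $B_{j_i}Q$, giving a correspondence between the $2q$ faces of $Q$ and the $2q$ faces of $f(Q)$. Tracking the face orientations $\epsilon_{f, A_{j_i}Q}$ and $\epsilon_{f, B_{j_i}Q}$ relative to $\epsilon_{f,Q}$ via the determinant description, and accounting for any permutation of coordinate axes induced by $f$, the two sides agree after bookkeeping of signs.

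The main obstacle is the non-embedding case: here $\bar f_\#(Q) = 0$, and we must show
\[ \bar f_\#(\pd_q Q) = \sum_{i=1}^q (-1)^i \bigl( \epsilon_{f, A_{j_i}Q}\, f(A_{j_i}Q) - \epsilon_{f, B_{j_i}Q}\, f(B_{j_i}Q) \bigr) = 0 \]
in $\bar C^{c_1}_{q-1}(Y)$. Even though $f$ fails to embed $Q$, individual faces of $Q$ may still be embedded, so the surviving terms must cancel in pairs. Intuitively, a non-embedding $f|_Q$ must collapse $Q$ along some direction, and opposite faces along that direction should contribute cancelling terms; but the combinatorics become subtle when several partial collapses occur simultaneously, and no obvious global structural reason forces the cancellation.

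To close this case under the hypothesis $n \le 4$, I would reduce to a finite enumeration. Since $f|_Q$ is $(c_1, c_1)$-continuous, adjacent vertices of $Q$ map to $c_1$-adjacent or equal vertices, so $f(Q)$ sits inside an axis-aligned box of side length at most $q \le n \le 4$. After translating in $Y$ and discarding coordinates of $Y$ along which $f|_Q$ is constant, the restriction $f|_Q$ is described by one of finitely many combinatorial types for each $q \in \{1,2,3,4\}$. A systematic computer enumeration over every non-embedding combinatorial type, directly computing $\bar f_\#(\pd_q Q)$ in each, establishes the vanishing and completes the proof; this enumeration, rather than any structural reason, is what forces the dimension hypothesis $n \le 4$.
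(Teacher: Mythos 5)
Your proposal is correct and follows essentially the same route as the paper: reduce by linearity to a single elementary $q$-cube, normalize by translating $Q$ to $I^q$ and trimming/bounding the target so that only finitely many possible restrictions $f|_Q$ remain, and then verify $\bar f_\#(\partial Q) = \partial \bar f_\#(Q)$ by computer enumeration, which is exactly what forces the hypothesis $n \le 4$. The only (minor) difference is that the paper enumerates all continuous maps on $I^q$ for $q \le 4$, including the embedding case, rather than disposing of embeddings separately by a sign-bookkeeping argument.
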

\begin{proof}
It suffices to show that $f_\#(\pd Q) = \pd f_\#(Q)$ for any elementary $q$-cube $Q$. We may assume that $q\le n$, since there are no $q$-cubes in $X$ of dimension greater than $n$. For simplicity, by relabeling points (translating to the origin), we may assume that $Q = I^q = [0,1]_\Z^q$, and also that $f(0,\dots,0) = (0,\dots,0)$. Then there are only finitely many possibilities for the behavior of $f$ on $Q$, and we can simply check that $f_\#(\pd Q) = \pd f_\#(Q)$ in all cases. 

We can narrow down the number of cases to check as follows: The set $f(Q)$ is contained in a $q$-dimensional subspace of $\Z^m$, and so it suffices to consider $Y\subset \Z^q$. Also note that $Q$ is a set of diameter $q$, and so since $f$ is continuous, $f(Q)$ has diameter at most $q$. Since $f(Q)$ has diameter $q$ and maps the origin to the origin, we may assume that $f(Q) \subset [-q,\dots,q]_\Z^q$.

Thus the enumeration must only construct all possible continuous functions from $I^q$ to $[-q,\dots,q]_\Z^q$. To further narrow down the search, we assign an ordering to the points of $I_q$ and assume without loss of generality that the first nonzero value of $f$ is the point $(1,0,\dots,0)$. 

With these filters in place, the computation becomes tractable for $q=n \le 4$. When $q=0$ there is only 1 function to check, for $q=1$ there are only 2, for $q=2$ there are 16, and for $q=3$ there are 2128. For $q=4$ there are 23,943,296 functions, requiring 4 days to complete the enumeration, which meets the limit of the author's patience.\end{proof}

Obviously it would be preferable to have a human readable proof of the above, using arguments which suffice in any dimension. We state this as a conjecture.
\begin{conj}\label{chainmapconj}
Let $X\subset \Z^n$ and $Y\subset \Z^m$ be digital images with $c_1$-adjacency, and let $f:X\to Y$ be continuous. Then $f_\#: C_q(X) \to C_q(Y)$ is a chain map.
\end{conj}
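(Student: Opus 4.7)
The plan is to verify the chain-map identity $\bar f_\#(\partial Q) = \partial \bar f_\#(Q)$ on each generator $Q$ of $\bar C^{c_1}_q(X)$. By translating I may assume $Q = I^q$. My first step is a structural reduction: an elementary $q$-cube with $c_1$-adjacency is graph-isomorphic to the Boolean hypercube, whose graph automorphisms are the signed coordinate permutations (restrictions of affine linear isomorphisms). A short induction on $q$, comparing opposite $(q-1)$-faces of $Q$ and using the $q=2$ case to force the ``vertical'' edge directions to agree across $Q$, then shows that every continuous injection $I^q \to (\Z^m, c_1)$ is itself the restriction of an affine linear map with image an elementary $q$-cube. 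Consequently the dichotomy is clean: either $f|_Q$ is injective, in which case $f$ is an embedding on $Q$ and $\epsilon_{f,Q} = \pm 1$ is the determinant of an honest signed permutation, or $f|_Q$ is not injective and $\epsilon_{f,Q} = 0$.

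In the embedding case, let $\sigma$ be the signed coordinate permutation corresponding to $f|_Q$. For each face $A_{j_i}Q$ or $B_{j_i}Q$ of $Q$, the restriction of $f$ is again an affine linear isomorphism onto a face of $Q' = f(Q)$, and its orientation equals $\epsilon_{f,Q}$ times the sign factor obtained by striking out the row and column of $\sigma$ corresponding to the collapsed coordinate. The identity then reduces to a standard sign lemma matching the signs $(-1)^i$ in $\partial Q$ to those of $\partial Q'$ under $\sigma$, which is the classical determinant-expansion computation from cellular cubical homology.

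The non-embedding case is the main obstacle. Here $\bar f_\#(Q) = 0$, so the goal is $\bar f_\#(\partial Q) = 0$, which I would check coefficient-by-coefficient: for each elementary $(q-1)$-cube $R \subset Y$, the coefficient of $R$ in $\bar f_\#(\partial Q)$ is the signed sum $\sum_F \mathrm{sign}(F)\,\epsilon_{f,F}$ over those faces $F$ of $Q$ with $f(F)=R$ on which $f$ is an embedding, where $\mathrm{sign}(F)\in\{\pm 1\}$ is the coefficient of $F$ in $\partial Q$. The proposed mechanism is a fixed-point-free, sign-reversing involution on this set, induced by the symmetry group of $Q$ that $f$ respects: the subgroup $\{\tau \in \mathrm{Aut}(Q): f\circ \tau = f\}$ permutes the fibers of $f$ setwise, and in the worked-out low-dimensional cases (edge collapses, diagonal identifications, and other $I^2$ configurations) a nontrivial element always produces the required pairing. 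The main obstacle is proving that such a symmetry can always be found (or that the faces otherwise pair up) in higher dimensions when $f|_Q$ has complicated or multiple simultaneous identifications; for instance, an antipodal identification in $I^3$ is forced by continuity to create further identifications, and one must control how all of these propagate to the faces.

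As a fallback, one can route the argument through the cubical singular theory of Jamil and Ali. Sending each elementary $q$-cube to its natural affine parametrization defines a homomorphism $\Phi : \bar C^{c_1}_q(X) \to \bar C_q(X)$ that is straightforwardly checked to be a chain map by matching the face operators. Since $f_\#$ on $\bar C_*(Y)$ is a chain map by \cite[Theorem 3.7]{ja19}, it would suffice to show $\Phi \circ \bar f_\# = f_\# \circ \Phi$, possibly after passing to a quotient of $\bar C_*(Y)$ that identifies affine reparametrizations of the same elementary cube up to the determinant sign. The injectivity of $\Phi$ into such a quotient would then transfer the chain-map property. The subtle point is that the Jamil--Ali singular theory distinguishes reparametrizations on the nose, so constructing the appropriate sign-adjusted quotient and verifying that both induced maps descend to it requires care.
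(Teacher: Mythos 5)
You should first note that the paper does not prove this statement at all: it is stated as Conjecture \ref{chainmapconj}, and the only supporting evidence offered is Theorem \ref{c1chainmap}, a brute-force computer enumeration of all continuous maps on $I^q$ for $q\le 4$. So there is no proof of record to match; the question is whether your argument actually closes the conjecture, and it does not. Your structural reduction is sound and genuinely valuable: a continuous \emph{injection} $I^q\to(\Z^m,c_1)$ sends every $2$-face to a unit square, which forces the $q$ edge-directions at a base vertex to lie in distinct coordinates and forces opposite edges of each face to be equal vectors, whence $f|_Q$ is affine and $f(Q)$ is an elementary $q$-cube. This cleanly settles the embedding case by the classical sign computation, and it is a real improvement over anything in the paper. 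But the entire difficulty of the conjecture sits in the non-embedding case, where you must show $\bar f_\#(\partial Q)=0$, and there your argument is explicitly incomplete. The proposed mechanism --- a fixed-point-free, sign-reversing involution on the set of faces $F$ with $f(F)=R$, induced by a nontrivial $\tau\in\mathrm{Aut}(Q)$ with $f\circ\tau=f$ --- is verified only in low-dimensional worked examples. You give no argument that such a symmetry exists for an arbitrary non-injective continuous $f|_Q$ in dimension $q\ge 5$, nor that, even when it exists, its action on the relevant faces is free and reverses the sign $\mathrm{sign}(F)\,\epsilon_{f,F}$. As it stands this is a conjecture-shaped placeholder occupying the same epistemic position as the paper's own enumeration.

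The fallback route does not rescue this. To transfer the chain-map property from the Jamil--Ali singular cubical theory you would need a quotient of $\bar C_*(Y)$ in which every non-injective, non-degenerate singular cube (e.g.\ $\sigma:I^2\to\Z$ with $\sigma(0,0)=\sigma(0,1)=\sigma(1,1)=0$, $\sigma(1,0)=1$, which is not degenerate) is identified with $0$, and every injective reparametrization is identified with $\pm$ the standard parametrization of its image. For that quotient to carry a boundary operator, the subgroup you are killing must be a subcomplex, i.e.\ the boundary of a non-injective cube must already lie in it --- and that is precisely the assertion $\bar f_\#(\partial Q)=0$ you are trying to prove. The reduction is circular. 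So: keep the rigidity lemma and the embedding case, but the conjecture remains open until you can either prove the existence and sign-reversing action of the pairing in the non-embedding case in all dimensions, or find a different mechanism (for instance, an explicit chain homotopy or an acyclic-models style argument) for showing that the images of the faces of a collapsed cube cancel.
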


\section{Equivalence of simplicial and singular homologies}\label{sxequivsection}

In this section we show that the simplicial and singular homology theories are equivalent. We will follow the argument used in classical algebraic topology to show that the simplicial and singular homology theories of a simplicial complex are equivalent. We will give a complete proof here, following the general idea used in \cite[Theorem 2.27]{hatc02}. 

There is an obvious homomorphism $\alpha:\v C_q(X) \to C_q(X)$ defined by: 
\[ \alpha [x_0,\dots, x_q] = \langle x_0,\dots, x_q\rangle,\] 
and it is easy to check that this is a chain map. Thus we obtain an induced homomorphism $\alpha_*:\v H_q(X) \to H_q(X)$. In this section we show that $\alpha_*$ is an isomorphism for each $q$.

For each $k\ge 0$, let $C_q(X^k)$ be the free abelian group generated by the set of $q$-simplices of dimension $k$ or less. (This group will always either be trivial or equal to $C_q(X)$, so it is not very interesting in its own right, but we introduce the notation in order to make an inductive argument below.) For any $k$, the sequence $(C_q(X^k), \pd)$ is a chain complex, a subcomplex of $(C_q(X),\pd)$, and we will have $C_q(X^k) \le C_q(X^{k+1}) \le C_q(X)$, where $\le$ indicates a subgroup. 

There are natural inclusion and projection maps which make the following sequence exact:
\[ 0 \to C_q(X^{k-1}) \to C_q(X^k) \to C_q(X^k)/C_q(X^{k-1})  \to 0, \]
and so we may construct a long exact relative homology sequence as in \eqref{longexact}:

\[\dots \to H_{q+1}(X^k ,X^{k-1}) \to H_q(X^{k-1}) \to H_q(X^k) \to H_q(X^k, X^{k-1}) \to H_{q-1}(X^{k-1})\to \dots \]

Similarly for each $k\ge 0$, let $\v C_q(X^k)$ be the free abelian group generated by the set of singular $q$-simplices $\phi$ such that $\phi(\Delta^q)$ is a simplex of dimension $k$ or less. The group $\v C_q(X^k)$ is trivial when $q<k$ but nontrivial for $q \ge k$. 

For any $k$, the sequence $(\v C_q(X^k), \pd)$ is a chain complex, a subcomplex of $(\v C_q(X),\pd)$, and we will have $\v C_q(X^k) \le \v C_q(X^{k+1}) \le \v C_q(X)$. 

Again from \eqref{longexact} we have a long exact relative homology sequence:
\[\dots \to \v H_{q+1}(X^k ,X^{k-1}) \to \v H_q(X^{k-1}) \to \v H_q(X^k) \to \v H_q(X^k, X^{k-1}) \to \v H_{q-1}(X^{k-1})\to \dots \]

The same map $\alpha$ above will restrict to a chain map of $\v C_q(X^k) \to C_q(X^k)$ and induce a chain map of $\v C_q(X^k,X^{k-1}) \to C_q(X^k,X^{k-1})$, and thus we obtain the following commutative diagram using the relative homology sequence, where the vertical arrows are induced by $\alpha$.
\begin{equation}\label{5lemeq}
\begin{tikzcd}
\v H_{q+1}(X^k,X^{k-1}) \arrow[r] \arrow[d] & \v H_q(X^{k-1}) \arrow[r] \arrow[d] & \v H_q(X^k) \arrow[r] \arrow[d] & \v H_{q}(X^k, X^{k-1}) \arrow[d]\arrow[r] & \v H_{q-1}(X^{k-1}) \arrow[d] \\
H_{q+1}(X^k ,X^{k-1}) \arrow[r] & H_q(X^{k-1}) \arrow[r] & H_q(X^k) \arrow[r] &H_q(X^k, X^{k-1}) \arrow[r] & H_{q-1}(X^{k-1}) 
\end{tikzcd}
\end{equation}

\begin{lem}\label{relativelem}
For each $q$, the function $\bar \alpha_q: \v H_q(X^k, X^{k-1}) \to H_q(X^k,X^{k-1})$ induced by $\alpha$ is an isomorphism.
\end{lem}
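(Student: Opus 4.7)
The plan is to show that both $\v H_q(X^k, X^{k-1})$ and $H_q(X^k, X^{k-1})$ are isomorphic to $C_k(X)$ when $q = k$ and vanish otherwise, with $\bar\alpha_k$ realizing the identification. The simplicial side is immediate: since $C_q(X^k) = C_q(X)$ for $q \le k$ and $C_q(X^k) = 0$ for $q > k$, the relative chain complex is concentrated in degree $k$, giving $H_k(X^k, X^{k-1}) \cong C_k(X)$ and $H_q(X^k, X^{k-1}) = 0$ for $q \ne k$.

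For the singular side, I would first decompose $\v C_*(X^k, X^{k-1}) = \bigoplus_S \v C_*^S$ as a direct sum of subchain complexes indexed by (unordered) $k$-simplices $S$ of $X$, where $\v C_q^S$ is generated by classes of singular $q$-simplices $\phi$ with $\phi(\Delta^q) = S$; this is a decomposition of chain complexes because any face of such a $\phi$ has image either equal to $S$ or strictly smaller (and hence vanishes in the relative quotient). Fixing $S = \{x_0,\dots,x_k\}$ and viewing it as its own digital image (a complete graph on $k+1$ vertices), there is a short exact sequence
\[
0 \to \v C_*(\pd S) \to \v C_*(S) \to \v C_*^S \to 0,
\]
where $\v C_*(\pd S)$ consists of singular chains whose image is properly contained in $S$. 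Because $S$ is complete, the cone operator $h[y_0,\dots,y_q] = [x_0, y_0,\dots, y_q]$ is well defined and a direct calculation gives $\pd h + h\pd = \id$ on positive-degree chains, so $\v H_0(S) \cong \Z$ and $\v H_q(S) = 0$ for $q > 0$. The associated long exact sequence then reduces the computation of $\v H_*^S$ to that of $\v H_*(\pd S)$.

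To compute $\v H_*(\pd S)$, I would argue by induction on $k$ using the cover $\pd S = \bigcup_{i=0}^k (S \setminus \{x_i\})$ by $(k-1)$-simplices: each $r$-fold intersection is a $(k-r)$-simplex whose singular homology is known by the inductive hypothesis, and iterated Mayer--Vietoris shows that $\v H_*(\pd S)$ has the homology of a $(k-1)$-sphere, yielding $\v H_k^S \cong \Z$ and $\v H_q^S = 0$ for $q \ne k$. Summing over $S$ gives $\v H_k(X^k, X^{k-1}) \cong C_k(X)$, and one checks that $\bar\alpha_k$ sends the generator of each $\v H_k^S$ (represented by any bijection $\Delta^k \to S$) to the corresponding ordered $k$-simplex. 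The main obstacle will be this Mayer--Vietoris computation of $\v H_*(\pd S)$, which requires carefully setting up the relevant long exact sequences in the digital singular framework; a cleaner alternative that sidesteps the inductive argument is to recognize $C_*(X)$ and $\v C_*(X)$ as the alternating and ordered simplicial chain complexes of the clique complex of $X$, whereupon the lemma becomes a translation of the classical theorem --- provable via the acyclic carrier theorem --- that these two chain complexes are quasi-isomorphic via $\alpha$.
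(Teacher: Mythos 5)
Your outline is correct but takes a genuinely different, and substantially heavier, route than the paper. The paper never computes the relative groups at all: it notes that $\bar\alpha_q$ is surjective at the chain level, that $\v C_q(X^k,X^{k-1})$ is trivial for $q<k$, and then for $q\ge k$ shows the chain-level kernel is contained in $\v B_q(X^k,X^{k-1})$ by writing down one explicit $(q+1)$-chain, $\sigma=[x_0,\dots,x_{i+1},x_i,x_{i+1},\dots,x_q]$, whose relative boundary is $\pm\bigl([x_0,\dots,x_i,x_{i+1},\dots,x_q]+[x_0,\dots,x_{i+1},x_i,\dots,x_q]\bigr)$, since every other face lists too few distinct points and dies in the quotient; injectivity on homology then follows from chain-level surjectivity. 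That short boundary computation replaces everything you propose to do with the $S$-indexed decomposition, the cone operator, and Mayer--Vietoris. Your plan would work: the direct sum decomposition over $k$-simplices $S$ is valid, the cone contraction of $\v C_*(S)$ is right, and the observation that the relative simplicial complex is concentrated in degree $k$ with $H_k\cong C_k(X)$ is correct. But the step you yourself flag as the main obstacle --- that $\v H_*(\pd S)$ has the homology of a $(k-1)$-sphere --- is where essentially all the content lies, and it is only sketched; note that ``$\pd S$'' is not a digital image, so the Mayer--Vietoris argument must be run purely algebraically on sums of chain subcomplexes, with an induction over unions of facets and careful base cases, and you would also need to track generators (or invoke surjectivity of $\bar\alpha$, which does rescue you) to see that a bijection $\Delta^k\to S$ generates $\v H_k^S$. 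Your closing alternative --- identifying $\v C_*(X)$ and $C_*(X)$ with the ordered and alternating chain complexes of the clique complex and citing the classical acyclic-carrier comparison --- is the cleanest conceptual explanation and would yield Theorem \ref{simplicialiso} directly, but it bypasses the paper's filtration/five-lemma scheme rather than proving this relative lemma as stated (one would still need to check the carriers respect the skeletal filtration). In short: the paper's proof buys brevity and self-containedness; yours buys an explicit computation of the relative groups (free abelian on the $k$-simplices, concentrated in degree $k$), which is more information than the lemma requires, at the cost of a nontrivial computation you have not yet carried out.
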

\begin{proof}
It is clear from its definition that $\alpha$ is surjective, so $\bar \alpha$ will be surjective, and it is enough to show that $\bar \alpha$ is injective. 

The chain group $\v C_q(X^k, X^{k-1}) = \v C_q(X^k) / \v C_q(X^{k-1})$ is the free abelian group generated by all singular $q$-simplices $\phi$ of $X^k$ such that $\phi(\Delta^q)$ is a simplex of dimension $k$. 
When $q<k$ this group is trivial, and so $\v H_q(X^k,X^{k-1})$ is trivial, and thus $\bar \alpha$ is injective for $q<k$.

It remains to consider the case $q\ge k$. Let $\bar \alpha_q:\v C_q(X^k,X^{k-1}) \to C_q(X^k,X^{k-1})$ be the homomorphism induced by $\alpha$. To show that $\ker \bar \alpha_{*,q}$ is trivial, it suffices to show that $\ker \bar \alpha_{q} \subset \v B_q(X^k,X^{k-1})$. By the definition of $\alpha$, we see that $\ker \bar \alpha_{q}$ is generated by all sums $\phi + \psi$ where $\phi,\psi \in \v C_q(X^k,X^{k-1})$ and $\psi$ is an odd permutation of $\phi$. Any odd permutation can be expressed as a composition of an odd number of transpositions, and so to show that $\ker \alpha_{q} \subset \v B_q(X^k,X^{k-1})$, it suffices to show $\phi + \psi \in \v B_q(X^k,X^{k-1})$ whenever $\psi$ is a transposition of $\phi$. That is, given $\phi = [x_0,\dots,x_q]$ and any $i \in \{0,\dots,q-1\}$, we must show that:
\begin{equation}\label{inBk}
[x_0,\dots,x_i,x_{i+1},\dots,x_q] + [x_0, \dots, x_{i+1}, x_i, \dots, x_q] \in \v B_k(X^q,X^{q-1}). 
\end{equation}

The assumption that $\phi \in \v C_q(X^k,X^{k-1})$ means that the set $\{x_0,\dots,x_q\}$ consists of exactly $k$ distinct points. Let $\sigma = [x_0,\dots,x_{i+1},x_i,x_{i+1},\dots, x_q] \in C_{q+1}(X^k,X^{k-1})$, and we compute:
\[ \begin{split}
\pd_{q+1}\sigma =& [x_1,\dots,x_{i+1},x_i,x_{i+1},\dots, x_q] + \dots + (-1)^i [x_0,\dots,x_i,x_{i+1},\dots,x_q] \\
& + (-1)^{i+1} [x_0,\dots,x_{i+1},x_{i+1},\dots,x_q] + (-1)^{i+2}[x_0,\dots,x_{i+1},x_i,\dots,x_q] \\
& + (-1)^{q+1} [x_0,\dots,x_{i+1},x_i,x_{i+1},\dots, x_{q-1}]
\end{split} \]
Any term above listing fewer than $k$ distinct points will be zero in $\v C_q(X^k,X^{k-1})$, and so most of the terms above are zero. We are left with:
\[ 
\pd_{q+1}\sigma = (-1)^i ([x_0,\dots,x_i,x_{i+1},\dots,x_q] + [x_0,\dots,x_{i+1},x_i,\dots,x_q])
\]
which establishes \eqref{inBk}.
\end{proof}

Our main theorem requires a very weak finiteness condition on $X$. Any finite digital image will satisfy the condition, as will any image $X\subset \Z^n$ with $c_k$-adjacency for any $k$. The main work of the proof has already been accomplished by Lemma \ref{relativelem}, the proof below is simply a homological argument.
\begin{thm}\label{simplicialiso}
Let $(X,\kappa)$ be a digital image, and assume there is some dimension $k$ for which $X$ contains no simplicies of dimension greater than $k$. Then for each $q$, we have $H_q(X) \cong \v H_q(X)$. 
\end{thm}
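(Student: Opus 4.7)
The plan is a standard inductive argument using the Five Lemma applied to diagram \eqref{5lemeq}. Specifically, I will prove by induction on $k$ that $\alpha_*: \v H_q(X^k) \to H_q(X^k)$ is an isomorphism for every $q$, and then transfer this to the absolute homology using the finite-dimensionality hypothesis on $X$.

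For the base case $k=0$, both chain complexes can be analyzed directly. The group $C_q(X^0)$ is generated by the zero-simplices $\langle x\rangle$ in degree zero and vanishes in positive degrees. The group $\v C_q(X^0)$ consists of the constant singular simplices $[x,\dots,x]$, one in each degree for each $x\in X$; their boundaries are either zero or an identification of basis elements depending on the parity of $q$. A short computation gives $\v H_0(X^0)\cong H_0(X^0)\cong \Z^{|X|}$ and $\v H_q(X^0)=H_q(X^0)=0$ for $q>0$, with $\alpha_*$ the obvious identification induced by $[x]\mapsto \langle x\rangle$, so $\alpha_*$ is an isomorphism in every degree.

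For the inductive step, assume $\alpha_*:\v H_q(X^{k-1})\to H_q(X^{k-1})$ is an isomorphism for every $q$. In diagram \eqref{5lemeq} the two outer vertical arrows between the relative groups $\v H_\ast(X^k,X^{k-1})\to H_\ast(X^k,X^{k-1})$ are isomorphisms by Lemma \ref{relativelem}, and the two vertical arrows between $\v H_\ast(X^{k-1})$ and $H_\ast(X^{k-1})$ are isomorphisms by the inductive hypothesis. The Five Lemma then gives that the middle arrow $\alpha_*:\v H_q(X^k)\to H_q(X^k)$ is an isomorphism, completing the induction.

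To conclude, choose $k$ large enough that $X$ has no simplices of dimension exceeding $k$. Then on the simplicial side $C_q(X)=C_q(X^k)$ for every $q$: when $q\le k$ every $q$-simplex automatically has dimension at most $k$, and when $q>k$ no $q$-simplex exists in $X$ by hypothesis. On the singular side, any singular $q$-simplex $\phi:\Delta^q\to X$ has image a mutually adjacent subset of $X$, which has dimension at most $k$, so $\v C_q(X)=\v C_q(X^k)$. The established isomorphism $\alpha_*:\v H_q(X^k)\to H_q(X^k)$ is therefore exactly the desired $\alpha_*:\v H_q(X)\to H_q(X)$. The only substantive obstacle in the entire argument is Lemma \ref{relativelem}, which is already established in the excerpt; everything else is formal homological algebra, so no further technical work is required beyond the diagram chase above.
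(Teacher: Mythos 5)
Your proof is correct and follows essentially the same route as the paper: induction over the skeletal filtration $X^k$, the Five Lemma applied to diagram \eqref{5lemeq} with Lemma \ref{relativelem} supplying the relative isomorphisms, and the finite-dimensionality hypothesis identifying $H_q(X)$ with $H_q(X^k)$ and $\v H_q(X)$ with $\v H_q(X^k)$. The only cosmetic differences are that you run a single induction on $k$ with all $q$ quantified inside (the paper phrases it as a double induction on $q$ and $k$, with a separate $q=0$ base case) and that you verify the $k=0$ case by direct computation rather than citing Theorems \ref{simplicialisolated} and \ref{singularisolated}.
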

\begin{proof}
We prove the theorem by induction on $q$ and $k$.
For $q=0$, we have $H_0(X) \cong \v H_0(X)$, since by Theorems \ref{simplicialdim0} and \ref{singulardim0} these homology groups are both $\Z^d$ where $d$ is the number of components of $X$.

When $k=0$ then $X$ simply consists of a set of isolated points, and so $H_q(X) \cong \v H_q(X)$ by Theorems \ref{simplicialisolated} and \ref{singularisolated}.

Now we consider the inductive case. Since $X$ contains only simplices of dimension $k$ or less, we will have $C_q(X) = C_q(X^k)$ and $\v C_q(X) = \v C_q(X^k)$ for all $q$. 
Thus $H_q(X) = H_q(X^k)$ and $\v H_q(X) = \v H_q(X^k)$ for all $q$, and it suffices to show that the vertical arrow in the middle of \eqref{5lemeq} is an isomorphism. 

By the Five Lemma (see \cite{hatc02}), it suffices to show that the other 4 vertical arrows are isomorphisms. By induction in $k$, the second vertical arrow $\v H_q(X^{k-1}) \to H_q(X^{k-1})$ is an isomorphism, and by induction in $k$ and $q$ the last vertical arrow $\v H_{q-1}(X^{k-1}) \to H_{q-1}(X^{k-1})$ is an isomorphism. By Lemma \ref{relativelem}, the first and fourth arrows are isomorphisms. 
\end{proof}

The author's original intention was to additionally prove a cubical version of Theorem \ref{simplicialiso}, that is, if $X\subset \Z^n$ is a digital image with $c_1$-adjacency, then $\bar H_q(X) \cong \bar H^{c_1}_q(X)$ for each $q$. 

This turned out to be much harder than anticipated: even constructing a chain map $\beta:\bar C_q(X) \to \bar C^{c_1}_q(X)$ is difficult. We believe it should be possible, but we will simply state it as a conjecture:

\begin{conj}\label{cubicalconjecture}
Let $X \subset \Z^n$ be a digital image with $c_1$-adjacency. Then $\bar H_q(X)$ and $\bar H_q^{c_1}(X)$ are isomorphic.
\end{conj}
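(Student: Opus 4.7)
The strategy is to produce chain maps $\gamma:\bar C_q^{c_1}(X)\to \bar C_q(X)$ and $\beta:\bar C_q(X)\to \bar C_q^{c_1}(X)$ satisfying $\beta\gamma = \mathrm{id}$ on $\bar C^{c_1}$ and $\gamma\beta \simeq \mathrm{id}$ via a chain homotopy on $\bar C$; the induced maps on homology will then be mutually inverse isomorphisms.

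The map $\gamma$ is the easy direction. Given an elementary $q$-cube $Q = J_1\times\cdots\times J_n$, let $(j_1,\dots,j_q)$ enumerate its nondegenerate coordinate indices in increasing order, and set $\gamma(Q)=\sigma_Q$, where $\sigma_Q:I^q\to X$ sends $(t_1,\dots,t_q)$ to the point of $Q$ whose $j_i$-coordinate equals $\min J_{j_i}+t_i$ (with the degenerate coordinates fixed at their unique values). The face operators $A_i,B_i$ in the two theories are compatible, in the sense that $A_i\sigma_Q = \sigma_{A_{j_i}Q}$ and similarly for $B_i$, so direct substitution into \eqref{cubepd} shows $\pd\gamma=\gamma\pd$.

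The main obstacle is constructing $\beta$. The natural candidate mirrors the induced-homomorphism construction of Section~\ref{c1sec}: set $\beta(\sigma)=\epsilon_\sigma\cdot\sigma(I^q)$ when $\sigma:I^q\to X$ is an embedding onto an elementary $q$-cube of $X$, with $\epsilon_\sigma\in\{\pm 1\}$ recording the orientation of $\sigma$ relative to the canonical $\sigma_{\sigma(I^q)}$, and $\beta(\sigma)=0$ otherwise. Verifying that $\beta$ commutes with $\pd$ is the same combinatorial cancellation problem that forced the computer-assisted argument in Theorem~\ref{c1chainmap}: a non-embedding $\sigma$ contributes nothing under $\beta$, yet individual face maps $A_i\sigma,B_i\sigma$ can be embeddings onto elementary $(q-1)$-cubes, so one must show the corresponding terms of $\beta(\pd\sigma)$ cancel in signed pairs. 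My approach would be to enumerate the types of non-embedding (e.g.\ constancy in one coordinate, identification of two adjacent vertices, collapse of $\sigma(I^q)$ into a proper subcube) and for each such type construct an involution on $\{1,\dots,q\}$ pairing the surviving face contributions with opposite signs. A clean uniform proof may be elusive; in the worst case one could still establish the conjecture up to a bounded dimension, in parallel with Conjecture~\ref{chainmapconj}.

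Assuming $\beta$ is a chain map, $\beta\gamma=\mathrm{id}$ holds on the nose since $\sigma_Q$ is an embedding onto $Q$ with sign $+1$ by construction. For the reverse composition, one constructs a prism-type chain homotopy $H:\bar C_q(X)\to\bar C_{q+1}(X)$ with $\pd H+H\pd = \gamma\beta-\mathrm{id}$: for an embedding $\sigma$, $\gamma\beta(\sigma)$ differs from $\sigma$ only by a coordinate permutation of $I^q$, realizable through transpositions each of which can be framed as a degenerate $(q+1)$-cube; for a non-embedding $\sigma$, one instead needs $H\sigma$ whose boundary realizes $-\sigma$ modulo $H\pd\sigma$, which demands a more delicate choice of $(q+1)$-cube per non-embedding type. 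This is a secondary but non-trivial hurdle. With $\beta\gamma=\mathrm{id}$ and $\gamma\beta\simeq\mathrm{id}$, $\gamma_*$ and $\beta_*$ are mutually inverse isomorphisms, proving $\bar H_q(X)\cong\bar H_q^{c_1}(X)$.
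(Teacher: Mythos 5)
This statement is not proved in the paper at all: it is stated precisely because the author could not carry out the program you describe, and your proposal does not close that gap. Your map $\gamma$ and the identity $\beta\gamma=\mathrm{id}$ are fine, but the two load-bearing steps are exactly the ones you defer. First, the chain-map property of $\beta$ is asserted only as a plan (``enumerate the non-embedding types and construct an involution''), and you yourself note it is the same combinatorial cancellation that already forced a computer enumeration in Theorem~\ref{c1chainmap}; without an actual argument (or at least a worked low-dimensional verification), nothing has been established. Second, and more seriously, the chain homotopy for $\gamma\beta\simeq\mathrm{id}$ as sketched does not work. If $\sigma$ is an embedding onto an elementary $q$-cube $Q$, then $\sigma=\sigma_Q\circ\alpha$ for an automorphism $\alpha$ of $I^q$ (a composition of coordinate permutations \emph{and} reflections, not just permutations), and your proposed ``transposition prisms'' cannot be degenerate $(q+1)$-cubes: degenerate cubes are zero in $\bar C_{q+1}(X)$ by definition, so they contribute nothing to $\pd H+H\pd$ and cannot account for the nonzero chain-level difference $\gamma\beta(\sigma)-\sigma$. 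Moreover, a one-step prism interpolating $\sigma$ and $\sigma\circ\tau$ for a transposition $\tau$ need not even be continuous: for $\sigma=\mathrm{id}_{I^2}$ and $\tau$ the coordinate swap, the points $\sigma(0,1)=(0,1)$ and $\sigma(\tau(0,1))=(1,0)$ are not $c_1$-adjacent, so no $(\NP_1$-continuous$)$ $(q+1)$-cube realizes that interpolation in one step. The non-embedding case of $H$ is likewise only named, not constructed, and it is the genuinely hard one (think of a $2$-cube folding a square onto a single edge).

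So the proposal is a reasonable outline of the obvious strategy --- it is essentially the strategy the paper says it attempted --- but it contains no new idea that overcomes the known obstruction, and the one concrete mechanism it offers for the chain homotopy (degenerate prisms for permutations) is incorrect as stated. To make progress you would need either an honest proof that $\beta$ commutes with $\pd$ (perhaps via a normal-form/involution argument organized by the fibers of $\sigma$, or an acyclic-models-style argument adapted to the digital setting), together with a correct prism construction handling reflections, permutations, and non-embeddings; or, as you suggest, a bounded-dimension computer verification in parallel with Conjecture~\ref{chainmapconj}, which would prove special cases but not the conjecture itself.
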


It seems likely that the conjecture could be verified in low dimensions by computer enumerations, but we have not attempted this.

\section{Homotopy invariance in simplicial and cubical homology}\label{homologysec}
In this section we discuss the homotopy invariance of the induced homomorphism on the various homology groups. In \cite{ja19} it is shown that, when $f$ and $g$ are homotopic, their induced maps on cubical homology groups are the same.

We will prove the same property holds for $c_1$-cubical homology, and that a similar result holds for simplicial homology, but in this case the homotopy must be strong.

\begin{thm}\label{prism}
If $X$ is finite and $f,g:X\to Y$ are strongly homotopic, then the induced homomorphisms $f_*,g_*:H_q(X) \to H_q(Y)$ are equal for each $q$.
\end{thm}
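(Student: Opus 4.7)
The plan is to construct an explicit chain homotopy $P:C_q(X)\to C_{q+1}(Y)$ satisfying $\pd P + P\pd = g_\# - f_\#$; as in classical simplicial topology, this immediately yields $f_* = g_*$ on $H_q$. The reduction step is what makes the digital version work cleanly: since $X$ is finite, Theorem \ref{punctuatedthm} allows us to realize the strong homotopy between $f$ and $g$ by a punctuated one. Writing this as a sequence $f = f_0, f_1, \dots, f_n = g$ in which each consecutive pair is strongly homotopic in one step (a restriction of the ambient strong homotopy) and differs at exactly one point, functoriality of $(-)_*$ reduces the problem to the case where $f$ and $g$ agree on all of $X$ except at one point $x_j$, with $f(x_j) = y$ and $g(x_j) = y'$.

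For this special case, I would define $P$ on a basis simplex $\sigma = \langle x_0,\dots,x_q\rangle$ by $P(\sigma) = 0$ when $x_j \notin \{x_0,\dots,x_q\}$, and when $x_j = x_i$ for the (unique) index $i$,
\[ P(\sigma) = (-1)^i \langle f(x_0),\dots,f(x_{i-1}),\,y,\,y',\,f(x_{i+1}),\dots,f(x_q)\rangle. \]
To see that the right-hand side is a legitimate element of $C_{q+1}(Y)$, note that $y \adjeq y'$ by Theorem \ref{stronghtpadjs}; the $f(x_k)$ for $k \neq i$ are pairwise adjacent or equal since $\sigma$ is a simplex and $f$ is continuous; continuity of $f$ gives $y = f(x_j) \adjeq f(x_k)$ for each $k \neq i$; and the strong one-step homotopy condition applied to $x_k \adj x_j$ gives $f(x_k) \adjeq g(x_j) = y'$. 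Thus the listed vertices are mutually adjacent or equal, so the symbol is a valid ordered simplex (or is forced to $0$ by \eqref{permeq} if some entries coincide, which is harmless).

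The remaining work is a direct, sign-tracking verification of $\pd P(\sigma) + P(\pd \sigma) = g_\#(\sigma) - f_\#(\sigma)$ using \eqref{simplicialpd}. In the interesting case $x_j = x_i$, expanding $\pd P(\sigma)$ produces two ``middle'' terms, obtained by deleting $y$ or $y'$, which combine to give exactly $g_\#(\sigma) - f_\#(\sigma)$; the remaining ``side'' terms of $\pd P(\sigma)$ are meant to cancel against $P(\pd \sigma)$ term-by-term, once one tracks how the position of $x_j$ shifts from $i$ to $i-1$ inside the faces $\sigma_k$ with $k < i$. When $x_j$ does not occur in $\sigma$, all three of $\pd P(\sigma)$, $P(\pd\sigma)$, and $g_\#(\sigma)-f_\#(\sigma)$ vanish. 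The main obstacle is the sign bookkeeping in this cancellation; it is the usual prism computation from classical simplicial topology, but vastly simplified because reducing to a single-point change replaces the combinatorial prism subdivision with a single $(q+1)$-simplex.
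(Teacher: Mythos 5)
Your proposal is correct and follows essentially the same route as the paper: reduce via Theorem \ref{punctuatedthm} to a punctuated one-step homotopy, then run a prism-operator argument. Your $P$ is in fact the same operator as the paper's (the paper's full prism sum collapses to the single term containing the changed vertex), with your explicit $(-1)^i$ handling the general position of that vertex where the paper normalizes it to position $0$.
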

\begin{proof}
By induction and Theorem \ref{punctuatedthm}, it suffices to prove the result when $f$ and $g$ are homotopic by a punctuated homotopy in one step. 
We mimic the proof for this result in classical homology theory, see for example Proposition 2.10 of \cite{hatc02}. For each $q$, we define the ``prism operator'' $P:C_q(X) \to C_{q+1}(Y)$ as follows: For $\sigma\in C_q(X)$ with $\sigma = \langle x_0,\dots,x_q\rangle$, let:
\[ P(\sigma) = \sum_{j=0}^q (-1)^j \langle f(x_0),\dots, f(x_j), g(x_j),\dots, g(x_q)\rangle, \]
where the term $\langle f(x_0),\dots, f(x_j), g(x_j),\dots, g(x_q)\rangle$ is interpreted as $0$ if any of these points are equal.

Note that the definition of $P$ only makes sense if $\{f(x_0),\dots, f(x_j), g(x_j),\dots, g(x_q)\}$ is indeed a set of $q+2$ points that are mutually adjacent or equal. This is ensured because $f$ and $g$ are strongly homotopic in 1 step, and thus by Theorem \ref{stronghtpadjs}, since $\{x_0,\dots, x_q\}$ are mutually adjacent, the points of $\{f(x_0),\dots, f(x_q),g(x_0),\dots, g(x_q)\}$ are mutually adjacent or equal. This is the point at which the proof will fail if the homotopy is not strong.

The bulk of the proof consists of proving the following formula:
\begin{equation}\label{prismeq} 
\pd (P(\sigma)) = g_\#(\sigma) - f_\#(\sigma) - P(\pd \sigma). 
\end{equation}

Since $f$ and $g$ are homotopic in one step by punctuated homotopy, there is some $x'\in X$ such that $f(x)=g(x)$ for all $x\neq x'$. If $\sigma = \langle x_0,\dots,x_q\rangle$ with $x_i \neq x'$ for all $i$, then $f(x_i)=g(x_i)$ for each $i$. Thus $P(\sigma) = 0$, since $\langle f(x_0),\dots, f(x_j), g(x_j),\dots, g(x_q)\rangle$ will repeat the point $f(x_j)=g(x_j)$. Thus, whenever $\sigma$ does not use $x'$, we have $P(\sigma)=0$.

Formula \eqref{prismeq} is easy to prove when $\sigma$ does not use the vertex $x'$: in that case $P(\sigma)=0$ and thus the left side of \eqref{prismeq} is 0. For the right side, note that $\pd \sigma$ also does not use the point $x'$, and so we have $P(\pd \sigma) = 0$. Also since $\sigma$ does not use $x'$, we will have $f_\#(\sigma) = g_\#(\sigma)$, and thus the right side of \eqref{prismeq} is also 0, and we have proved \eqref{prismeq}.

Now we prove \eqref{prismeq} in the case when $\sigma$ does use the point $x'$. Without loss of generality assume $\sigma = \langle x', x_1,\dots,x_q\rangle$. In this case we have
\begin{align*} 
P(\sigma) &= \langle f(x'),g(x'),g(x_1),\dots, g(x_q) \rangle \\
&\qquad + \sum_{j=1}^q (-1)^j \langle f(x'),f(x_1),\dots, f(x_j), g(x_j), \dots, g(x_q) \rangle \\
&= \langle f(x'),g(x'),g(x_1),\dots, g(x_q) \rangle
\end{align*}
where most of the terms above are 0 because they repeat the point $f(x_j)=g(x_j)$. 
Then the left side of \eqref{prismeq} is
\begin{align*}
\pd (P(\sigma)) &= \pd(\langle f(x'),g(x'), g(x_1), \dots, g(x_q)\rangle) \\
&= \langle g(x'), g(x_1), \dots, g(x_q)\rangle - \langle f(x'), g(x_1), \dots, g(x_q)\rangle \\
&\qquad+ \sum_{i=1}^q (-1)^{i+1} \langle f(x'),g(x'),g(x_1),\dots, \widehat {g(x_i)}, \dots, g(x_q) \rangle 
\end{align*}
Since $g(x_i)=f(x_i)$, the above simplifies to:
\[ \pd (P(\sigma)) = g_\#(\sigma) - f_\#(\sigma) + \sum_{i=1}^q (-1)^{i+1} \langle f(x'),g(x'),g(x_1),\dots, \widehat {g(x_i)}, \dots, g(x_q) \rangle. \]

To prove \eqref{prismeq}, it suffices to show that the summation above equals $-P(\pd(\sigma))$. We have:
\begin{align*}
P(\pd(\sigma)) &= P\left( \langle x_1,\dots,x_q\rangle + \sum_{i=1}^q (-1)^i \langle x', x_1,\dots, \widehat{x_i}, \dots, x_q\rangle \right) \\
&= P\left(\sum_{i=1}^q (-1)^i \langle x', x_1,\dots, \widehat{x_i}, \dots, x_q\rangle \right) 
\end{align*}
where $P(\langle x_1,\dots,x_q\rangle) = 0$ since this simplex does not use $x'$. Now when we apply $P$ above, the only nonzero terms are those with $j=0$ in the definition of $P$. All others will repeat some point $f(x_i)=g(x_i)$. Thus we have:
\begin{align*}
P(\pd(\sigma)) &= \sum_{i=1}^q (-1)^i \langle f(x'),g(x'),g(x_1),\dots,\widehat{g(x_i)}, \dots, g(x_n) \rangle \\
&= -  \sum_{i=1}^q (-1)^{i+1} \langle f(x'),g(x'),g(x_1),\dots,\widehat{g(x_i)}, \dots, g(x_n) \rangle 
\end{align*}
and we have proved \eqref{prismeq}. 

The formula \eqref{prismeq} holds when $\sigma$ is any simplex, and so by linearity it will hold for any chain. Now let $\alpha\in Z_q(X)$ be a $q$-cycle, so $\pd(\alpha) = 0$ and thus $P(\pd(\alpha))=0$. Then by \eqref{prismeq} we have:
\[ g_\#(\alpha) - f_\#(\alpha) = \pd P(\alpha) \in B_q(Y). \]
Thus $g_\#(\alpha)$ and $f_\#(\alpha)$ differ by a $q$-boundary, that is, 
$g_*(\alpha) = f_*(\alpha) \in H_q(Y)$.
\end{proof}

Now we prove a homotopy invariance property for $c_1$-cubical homology. Since the result concerns the induced homomorphism in $c_1$-cubical homology, we must require that $f_\#:\bar C_q^{c_1}(X) \to \bar C_q^{c_1}(Y)$ is a chain map. Barring a proof of Conjecture \ref{chainmapconj}, we can only demonstrate the theorem in low dimensions.

\begin{thm}\label{c1prism}
Let $X\subseteq \Z^m$ and $Y\subseteq \Z^n$ be digital images both with $c_1$-adjacency, with $m\le 3$, and let $f,g:X\to Y$ be homotopic. Then the induced homomorphisms $\bar f_*,\bar g_*:\bar H_q^{c_1} (X) \to \bar H_q^{c_1}(Y)$ are equal for each $q$. If Conjecture \ref{chainmapconj} is true, then this holds for any $m$. 
\end{thm}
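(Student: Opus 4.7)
The plan is to mimic the classical prism operator construction for proving homotopy invariance of homology, adapted to the $c_1$-cubical setting. The crucial observation is that when $X \subseteq \Z^m$ has $c_1$-adjacency, the product $X \times [0,k]_\Z$ sits naturally in $\Z^{m+1}$ with $c_1$-adjacency: the product adjacency $\NP_1(c_1, c_1)$ that makes $H$ a homotopy by Theorem \ref{np1thm} coincides with $c_1$-adjacency in $\Z^{m+1}$. Hence a homotopy $H: X \times [0,k]_\Z \to Y$ is itself a $c_1$-continuous map between digital images in $\Z^{m+1}$ and $\Z^n$. Under the hypothesis $m \le 3$ we have $m+1 \le 4$, so by Theorem \ref{c1chainmap} the induced $\bar H_\#$ is a chain map on $c_1$-cubical chain groups. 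By splitting a $k$-step homotopy into $k$ one-step pieces and summing the resulting chain homotopies, it suffices to prove the one-step case.

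For a one-step homotopy $H$, I would define the prism operator $P: \bar C_q^{c_1}(X) \to \bar C_{q+1}^{c_1}(Y)$ on an elementary $q$-cube $Q$ by $P(Q) = (-1)^q \bar H_\#(Q \times I)$, where $Q \times I$ is viewed as an elementary $(q+1)$-cube in $X \times I$ with $I = [0,1]_\Z$. Introducing the auxiliary homomorphism $\Phi: \bar C_q^{c_1}(X) \to \bar C_{q+1}^{c_1}(X \times I)$ given by $\Phi(Q) = Q \times I$, a direct expansion of the $c_1$-cubical boundary formula (separating the original nondegenerate directions of $Q$ from the new time direction) gives
\[ \partial \Phi(Q) - \Phi(\partial Q) = (-1)^{q+1}(Q \times \{0\} - Q \times \{1\}). \]
Applying $\bar H_\#$, using the chain map identity $\partial \bar H_\# = \bar H_\# \partial$ together with the identifications $\bar H_\#(Q \times \{0\}) = \bar f_\#(Q)$ and $\bar H_\#(Q \times \{1\}) = \bar g_\#(Q)$, and then tracking the sign introduced by the factor $(-1)^q$, yields the standard chain homotopy relation $\partial P + P\partial = \bar g_\# - \bar f_\#$. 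Applied to any cycle $\alpha$, this shows $\bar g_\#(\alpha) - \bar f_\#(\alpha) \in \bar B_q^{c_1}(Y)$, so $\bar f_* = \bar g_*$ in homology.

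The main obstacle will be verifying the two orientation identities $\bar H_\#(Q \times \{0\}) = \bar f_\#(Q)$ and $\bar H_\#(Q \times \{1\}) = \bar g_\#(Q)$, i.e.\ that the determinant signs $\epsilon$ used in defining the $c_1$-cubical induced map behave correctly under the inclusions $\iota_0, \iota_1: X \to X \times I$. Since the nondegenerate directions of $Q \times \{t\}$ in $\Z^{m+1}$ are exactly the nondegenerate directions of $Q$ in $\Z^m$ (the $(m+1)$st coordinate being collapsed to the single point $t$), and $H|_{Q \times \{t\}}$ acts on these directions identically to $f|_Q$ or $g|_Q$, the defining determinants coincide; equivalently, this is multiplicativity of $\epsilon$ under composition with the orientation-preserving inclusions $\iota_t$. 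The extension to general $m$ then follows verbatim provided Conjecture \ref{chainmapconj} holds, since the only role of the hypothesis $m \le 3$ is to invoke Theorem \ref{c1chainmap} for the chain-map property of $\bar H_\#$ on $X \times I \subset \Z^{m+1}$.
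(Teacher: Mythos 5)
Your proposal is correct and follows essentially the same route as the paper: reduce to a one-step homotopy, observe that $\NP_1(c_1,c_1)$ on $X\times I\subset\Z^{m+1}$ is again $c_1$-adjacency so that Theorem \ref{c1chainmap} applies to the induced map of the homotopy (this is exactly where $m\le 3$, i.e.\ $m+1\le 4$, is used), and define the prism operator as the image under that chain map of the cubes $Q\times I$, yielding the chain homotopy $\partial P + P\partial = \bar g_\# - \bar f_\#$. The only cosmetic differences are your placement of the $I$ factor last (with the compensating sign $(-1)^q$) versus the paper's $I\times Q$, and the paper's direct combinatorial description $P(Q)=f(Q)\cup g(Q)$ which it then identifies with $\bar T_\#(I\times Q)$; the orientation identifications on the faces $Q\times\{0\}$ and $Q\times\{1\}$ that you flag are likewise asserted without detailed verification in the paper.
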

\begin{proof}
As in the proof of Theorem \ref{prism}, we may assume that $f$ is homotopic to $g$ in 1 step, that is, that $f(x) \adjeq g(x)$ for each $x$. By the same homological argument at the end of the proof of Theorem \ref{prism}, it will suffice to define a homomorphism $P:\bar C^{c_1}_q(X) \to \bar C^{c_1}_{q+1}(Y)$ which satisfies:
\begin{equation}\label{c1prismeq}
\pd P(Q) = \bar g_\#(Q) - \bar f_\#(Q) - P(\pd Q).
\end{equation}

For an elementary $q$-cube $Q\subset X$, we define $P(Q) \in \bar C^{c_1}_{q+1}(Y)$ as follows: if the set $f(Q) \cup g(Q)$ is an elementary $(q+1)$-cube, then $P(Q) = f(Q) \cup g(Q) \in \bar C^{c_1}_{q+1}(Y)$. Otherwise we define $P(Q) = 0$. Extending the definition linearly defines a homomorphism $P:\bar C^{c_1}_q(X) \to \bar C^{c_1}_{q+1}(Y)$. 

Let $T:I\times X \to Y$ be defined as $T(0,x) = f(x)$ and $T(1,x) = g(x)$. Since $f(x)\adjeq g(x)$ for each $x$, this function $T$ is continuous (using $\NP_1$ adjacency in the product $I\times X$). The induced homomorphism $\bar T_\#:C_{q+1}(I\times X) \to C_{q+1}(Y)$ is closely related to $P$. In particular $P(Q) = \bar T_\#(I\times Q)$, and when $i >1$ we have:
\[ \bar T_\#(A_i(I \times Q)) = P(A_{i-1} Q), \qquad \bar T_\#(B_i(I\times Q)) = P(B_{i-1} Q). \]
When $i=1$, we instead have $\bar T_\#(A_1(I\times Q)) = \bar T_\#(\{0\}\times Q) = f_\#(Q)$ and similarly $\bar T_\#(B_{1}(I\times Q) = g_\#(Q)$.

Since $X \subset \Z^3$ we may consider $I \times X \subset \Z^4$, and so by Theorem \ref{c1chainmap} we will have $\bar T_\#(\pd \sigma) = \pd \bar T_\#(\sigma)$ for any chain $\sigma$. Thus we have:
\begin{align*}
\pd P(Q) &= \pd \bar T_\#(I\times Q) = \bar T_\#(\pd (I \times Q))  \\
&= \sum_{i=1}^{q+1} (-1)^i (T_\#(A_i(I\times Q)) - T_\#(B_i(I\times Q))) \\
&= (g_\#(Q) - f_\#(Q)) + \left( \sum_{i=2}^{q+1} (-1)^i (P(A_{i-1}Q) - P(B_{i-1}Q)) \right) \\
&= g_\#(Q) - f_\#(Q) - P(\pd Q)
\end{align*}
which establishes \eqref{c1prismeq}.
\end{proof}

Theorem \ref{c1prism} will imply that, if $X$ and $Y$ are homotopy equivalent, then they have the same $c_1$-cubical homology groups. We immediately obtain:
\begin{cor}\label{c1contractible}
If $X \subset \Z^n$ is a contractible digital image with $c_1$-adjacency, and assume $n\le 3$ or that Conjecture \ref{chainmapconj} is true. Then:
\[ \bar H_q^{c_1}(X) = \begin{cases} \Z &\text{ if } q=0, \\
0 &\text{ if } q>0. \end{cases} \]
\end{cor}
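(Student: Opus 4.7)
The plan is to argue in the standard way from classical topology: a contractible space has the homology of a point, and the argument reduces to a direct computation for the constant map together with the homotopy invariance of Theorem \ref{c1prism}. Contractibility of $X$ means there exist continuous maps $f:X\to \{x_0\}$ and $g:\{x_0\}\to X$ with $g\circ f \simeq \id_X$; equivalently, $\id_X$ is homotopic to the constant map $c:X\to X$ sending every point to some fixed $x_0 \in X$. The hypothesis $n\le 3$ (or Conjecture \ref{chainmapconj}) is exactly what is needed for Theorem \ref{c1prism} to apply to the pair $\id_X, c$.

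For $q=0$, I would first observe that contractibility forces $X$ to be connected: if $H:X\times [0,k]_\Z \to X$ is the homotopy from $\id_X$ to $c$, then for any $x\in X$ the function $t\mapsto H(x,t)$ is a $c_1$-path from $x$ to $x_0$. Theorem \ref{c1dim0} then yields $\bar H_0^{c_1}(X) \cong \Z$.

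For $q>0$, I would compute $\bar c_\#$ directly on chains. For any elementary $q$-cube $Q \subset X$ with $q>0$, the image $c(Q) = \{x_0\}$ is a single point, hence not an elementary $q$-cube; so $c$ is not an embedding on $Q$, giving $\epsilon_{c,Q}=0$ and $\bar c_\#(Q) = 0$. Extending linearly, $\bar c_\# = 0$ on $\bar C_q^{c_1}(X)$ for all $q>0$, so the induced map $c_*$ on $\bar H_q^{c_1}(X)$ is zero. By Theorem \ref{c1prism}, $\id_* = c_* = 0$ on $\bar H_q^{c_1}(X)$, which forces $\bar H_q^{c_1}(X) = 0$.

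There is no significant obstacle here; the argument is essentially a one-line consequence of Theorem \ref{c1prism} once one notes that the constant map annihilates all positive-dimensional chains. The only real caveat is making sure Theorem \ref{c1prism} is applicable, which requires the chain-map property of $\bar c_\#$; this is guaranteed by Theorem \ref{c1chainmap} under the hypothesis $n \le 3$, or by Conjecture \ref{chainmapconj} in general, both of which are built into the statement of the corollary.
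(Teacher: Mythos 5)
Your proof is correct and matches the paper's intent: the corollary is stated there as an immediate consequence of the homotopy invariance in Theorem \ref{c1prism} (homotopy equivalent images have isomorphic $c_1$-cubical homology), with the point's homology supplied by Theorem \ref{c1isolated}. The only difference is cosmetic: rather than passing through the isomorphism $\bar H_q^{c_1}(X) \cong \bar H_q^{c_1}(\{x_0\})$, you work directly with the selfmaps $\id_X$ and the constant map $c$ and observe that $\bar c_\#$ annihilates all positive-dimensional chains, which is if anything slightly more self-contained since it avoids appealing to functoriality of the induced homomorphism.
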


\section{Relationships between cubical and simplicial homology}\label{relationshipsection}
So far we have exhibited four homology theories: simplicial, singular, cubical, and $c_1$-cubical. The simplicial and singular theories are isomorphic, and we have conjectured that the two cubical theories are isomorphic. Thus, subject to the conjecture, there are two different types of homology under discussion: simplicial and cubical. The simplicial and cubical theories are not isomorphic, as we will see in the next section. In this section we consider some relationships that do exist between the simplicial and cubical theories.

We have already seen results that imply that the simplicial and cubical theories are the same in dimension 0. Theorems \ref{simplicialdim0}, \ref{cubicaldim0}, and \ref{c1dim0} give:
\begin{thm}\label{H0thm}
Let $X$ be any digital image. Then:
\[ H_0(X) = \bar H_0(X) = \Z^d \]
where $d > 0$ is the number of connected components of $X$. If $X\subset \Z^n$ with $c_1$-adjacency, then also $\bar H_0^{c_1}(X) = \Z^d$.
\end{thm}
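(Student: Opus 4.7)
The plan is to observe that this theorem is a direct summary of three results already established earlier in the paper, so the proof amounts to citing each of them in turn. Specifically, Theorem \ref{simplicialdim0} gives $H_0(X) \cong \Z^d$ for simplicial homology, Theorem \ref{cubicaldim0} gives $\bar H_0(X) \cong \Z^d$ for cubical homology, and Theorem \ref{c1dim0} gives $\bar H_0^{c_1}(X) \cong \Z^d$ when $X \subset \Z^n$ with $c_1$-adjacency. Each of these was already proved using essentially the same underlying argument: the chain group in dimension 0 is generated by the points of $X$, the boundary map out of dimension 0 is trivial, and two generators are homologous precisely when they lie in the same connected component, so the quotient is a free abelian group with one generator per component.

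First I would state that the equality $H_0(X) = \Z^d$ is immediate from Theorem \ref{simplicialdim0}. Next I would cite Theorem \ref{cubicaldim0} to obtain $\bar H_0(X) = \Z^d$. Finally, under the hypothesis $X \subset \Z^n$ with $c_1$-adjacency, I would invoke Theorem \ref{c1dim0} to get $\bar H_0^{c_1}(X) = \Z^d$. Since the isomorphism type $\Z^d$ is the same in all three cases, the three groups are all isomorphic to one another, which yields the chain of equalities stated.

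There is no real obstacle here: this theorem is a packaging result whose purpose is to emphasize that all the homology theories under discussion agree in dimension zero, and to set up the contrast with higher-dimensional disagreements explored in later sections. The only care required is to note that the $c_1$-cubical statement has an additional hypothesis (namely that $X$ be realized as a subset of $\Z^n$ with $c_1$-adjacency), which must be flagged separately from the first two equalities that hold for arbitrary digital images.
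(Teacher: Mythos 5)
Your proposal is correct and matches the paper exactly: the paper states this theorem as an immediate consequence of Theorems \ref{simplicialdim0}, \ref{cubicaldim0}, and \ref{c1dim0}, with no further argument. You have also correctly flagged that the $c_1$-cubical part carries the extra hypothesis that $X\subset\Z^n$ with $c_1$-adjacency.
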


In dimension 1, there is also a relationship between simplicial and cubical homology. 
The chain group $\v C_1(X)$ is generated by the set of all maps $\phi: \Delta^1 \to X$, while $\bar C_1(X)$ is generated by the set of all maps $\sigma: I^1 \to X$. But $\Delta^1$ and $I^1$ are the same digital image-- so we may identify chain groups $\v C_1(X) = \bar C_1(X)$, and the singular and cubical boundary maps $\pd_1$ are the same. Thus we may also identify the groups of cycles $\v Z_1(X) = \bar Z_1(X)$.

This identification induces a natural homomorphism $\v H_1(X) \to \bar H_1(X)$ which simply regards a singular $1$-cycle as a cubical $1$-cycle.

In exactly the same way, when $X\subset \Z^n$ with $c_1$-adjacency there is a natural homomorphism $H_1(X) \to \bar H_1^{c_1}(X)$ induced by the chain map which regards each $1$-simplex as an elementary $1$-cube.

\begin{thm}\label{H1surjection}
For any digital image $X$, the natural map $\v H_1(X) \to \bar H_1(X)$ is surjective. When $X\subset \Z^n$ with $c_1$-adjacency then the map $H_1(X) \to \bar H_1^{c_1}(X)$ is surjective.
\end{thm}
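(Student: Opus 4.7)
The plan is to establish surjectivity directly by lifting cubical $1$-cycles to singular $1$-cycles (respectively simplicial $1$-cycles in the $c_1$-cubical case), exploiting the fact that the $0$-dimensional chain groups are canonically identified.

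First I would make the natural map explicit. Since $\Delta^1$ and $I^1$ are the same digital image (two adjacent points), the generating sets for $\check C_1(X)$ and $Q_1(X)$ coincide, and $\bar C_1(X)$ is the further quotient $Q_1(X)/D_1(X)$; let $\iota:\check C_1(X)\to\bar C_1(X)$ be the resulting quotient homomorphism. Because there are no degenerate $0$-cubes, $\check C_0(X)$ and $\bar C_0(X)$ are canonically equal; on a nondegenerate basis element the singular formula $\pd[x_0,x_1]=[x_1]-[x_0]$ matches the cubical formula $\pd(x_0,x_1)=(x_1)-(x_0)$; and each degenerate $1$-cube $(x,x)$ has boundary $0$ in both theories. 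Hence $\iota$ is a chain map with respect to the canonical identification $\check C_0(X)=\bar C_0(X)$, so the induced map on $H_1$ is well-defined and coincides with the description that a singular $1$-cycle is simply regarded as a cubical $1$-cycle.

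For surjectivity, take any cubical cycle $\alpha\in\bar Z_1(X)$ and lift it to some $\tilde\alpha\in\check C_1(X)=Q_1(X)$ by picking a representative of its coset. Then $\iota(\tilde\alpha)=\alpha$, and the chain-map property gives $\pd\tilde\alpha=\pd\alpha=0$ under the canonical identification $\check C_0(X)=\bar C_0(X)$. Thus $\tilde\alpha\in\check Z_1(X)$, and its class in $\check H_1(X)$ maps to $[\alpha]\in\bar H_1(X)$.

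For the $c_1$-cubical case the same strategy applies, but I first need to construct the analogous chain map $\beta:C_1(X)\to\bar C_1^{c_1}(X)$. On a simplicial $1$-simplex $\langle x_0,x_1\rangle$, the endpoints are $c_1$-adjacent and hence differ by $\pm 1$ in a single coordinate $i$; I would set $\beta(\langle x_0,x_1\rangle)=\{x_0,x_1\}$ if $x_1=x_0+e_i$ and $\beta(\langle x_0,x_1\rangle)=-\{x_0,x_1\}$ otherwise. Using the $c_1$-cubical boundary formula $\pd\{x_0,x_0+e_i\}=\{x_0+e_i\}-\{x_0\}$ one verifies that $\beta$ commutes with $\pd$ and respects the antisymmetry convention $\langle x_0,x_1\rangle=-\langle x_1,x_0\rangle$. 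Since $\beta$ is surjective onto the generators of $\bar C_1^{c_1}(X)$, the same lifting argument yields a preimage in $H_1(X)$ for every class in $\bar H_1^{c_1}(X)$.

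The only point needing genuine care is the chain-map property in each case --- namely that the canonical identification of $0$-chain groups is compatible with the boundary formulas, and in the $c_1$ case that the sign convention on $\beta$ is reconciled with simplicial antisymmetry. No serious obstacle is anticipated, since everything reduces to the single fact that the boundary of a $1$-cell equals its endpoint minus its startpoint.
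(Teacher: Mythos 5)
Your lifting argument for surjectivity is fine and in substance agrees with the paper: since $\Delta^1$ and $I^1$ are the same digital image and degenerate $1$-cubes have zero boundary, every cubical $1$-cycle lifts to a singular $1$-cycle (and every elementary $1$-cube lifts to a signed $1$-simplex in the $c_1$ case), so once the map on homology exists, surjectivity is immediate. The gap is in the sentence ``$\iota$ is a chain map \dots so the induced map on $H_1$ is well-defined.'' What you verify is only compatibility of $\iota$ with the boundary from degree $1$ to degree $0$; that shows cycles map to cycles, but a well-defined map $\check H_1(X)\to\bar H_1(X)$ on homology classes also requires $\iota(\check B_1(X))\subseteq\bar B_1(X)$, i.e.\ that the boundary of every singular $2$-simplex becomes a cubical $1$-boundary. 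This is the actual content of the paper's proof: it constructs an operator $L:\check C_2(X)\to\bar C_2(X)$ sending $\phi=[x_0,x_1,x_2]$ to the $2$-cube $\sigma$ with $\sigma(0,0)=\sigma(1,0)=x_0$, $\sigma(0,1)=x_1$, $\sigma(1,1)=x_2$, and checks that the singular boundary of $\phi$ equals the cubical boundary of $L(\phi)$ under the identification of $1$-chains, giving $\check B_1(X)\le\bar B_1(X)$; surjectivity then follows because the $1$-cycle groups are identified. Your proposal supplies no degree-$2$ comparison at all, and since a singular $2$-simplex is not literally a $2$-cube, this step genuinely requires an idea --- it does not reduce to the ``endpoint minus startpoint'' bookkeeping you invoke. (If you intend to take the existence of the natural map as given from the surrounding discussion, note that the paper never justifies it anywhere else; this proof is where that justification lives.)

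For the $c_1$-cubical statement the corresponding point is trivial but should still be stated: with $c_1$-adjacency no three points of $X$ are mutually adjacent, so $C_2(X)=0$, hence $B_1(X)=0$ and $\beta$ automatically carries simplicial $1$-boundaries into $\bar B_1^{c_1}(X)$. With that remark added, your construction of $\beta$, its sign convention, and the lifting argument in that case are correct and match the paper.
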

\begin{proof}
We have $\v H_1(X) = \v Z_1(X) / \v B_1(X)$ and $\bar H_1(X) = \bar Z_1(X) / \bar B_1(X)$. Thus it suffices to show that, when we identify $\v Z_1(X) = \bar Z_1(X)$, we have $\v B_1(X) \le \bar B_1(X)$. 

For clarity, write the singular and cubical boundary operators as $\v \pd_2:\v C_2(X) \to \v C_1(X)$ and $\bar \pd_1:\bar C_2(X) \to \bar C_1(X)$. When $\phi = [x_0,x_1,x_2]$ is a singular $2$-simplex, then define $L(\phi) = \sigma$ to be the following $2$-cube:
\begin{align*}
\sigma(0,0) &= x_0 & \sigma(0,1) &= x_1 \\
\sigma(1,0) &= x_0 & \sigma(1,1) &= x_2
\end{align*}
and $L$ extends to a map $L:\v C_2(X) \to \bar C_2(X)$. It is routine to check that $\v \pd_2(\phi) = \bar \pd_2(L(\phi))$ for any $\phi \in \v C_2(X)$ when we identify $\v C_1(X) = \bar C_1(X)$. Thus we will have $\v B_1(X) \le \bar B_1(X)$ as desired.

Now when $X\subset \Z^n$ with $c_1$-adjacency, similarly we must show that $B_1(X) \le \bar B_1^{c_1}(X)$ but this is obvious because $C_2(X)$ is trivial since a digital image with $c_1$-adjacency contains no $2$-simplex. Thus $B_1(X) = \pd (C_2(X))$ is trivial and so is automatically a subgroup.
\end{proof}

Since $\v H_1(X) \cong H_1(X)$, the above implies that there is always a surjection $H_1(X) \to \bar H_1(X)$. 
We will see in the next section that $H_1(X)$ need not be isomorphic to $\bar H_1(X)$, and that there need not be any surjection $H_q(X) \to \bar H_q(X)$ when $q>1$. 

\section{Examples}\label{exlsection}
In this section we provide some examples comparing the simplicial and cubical homology groups of various images. Most of the examples for simplicial homology appear already in the literature, but no examples for $c_1$-cubical homology have been computed.

The simplest examples are the simplicial and cubical homology groups for single points. We have already seen this result as Theorems \ref{simplicialisolated}, \ref{cubicalisolated}, and \ref{c1isolated}.
\begin{thm}
Let $X$ be an image consisting of $d$ isolated points. Then:
\[ H_q(X) = \bar H_q(X) = \bar H_q^{c_1}(X) = \begin{cases} \Z^d  &\text{ if } q=0, \\
0 &\text{ if } q>0. \end{cases} \]
\end{thm}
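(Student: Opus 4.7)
The plan is essentially to note that this theorem is a direct consolidation of three earlier results already proved in the paper, one for each of the three homology theories under comparison. Specifically, Theorem \ref{simplicialisolated} establishes the claim for $H_q$, Theorem \ref{cubicalisolated} establishes it for $\bar H_q$, and Theorem \ref{c1isolated} establishes it for $\bar H_q^{c_1}$. So the ``proof'' amounts to an appeal to these three results.

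If one wanted a self-contained argument rather than a citation, the underlying reason in all three cases is the same two-step observation. First, in dimension $0$, each theory has its chain group freely generated by the $d$ points of $X$, and since $X$ has no adjacencies, no two distinct points are identified in homology; more formally, the dimension-$0$ statement follows from the general fact recorded in Theorems \ref{simplicialdim0}, \ref{cubicaldim0}, and \ref{c1dim0} that $H_0$ counts connected components, and here each of the $d$ points is its own component. Second, for $q>0$, in all three theories the chain group itself is trivial: a simplicial $q$-simplex requires $q+1$ mutually adjacent points, a cubical $q$-cube is a $(c_1,\kappa)$-continuous map from $I^q$ whose image must be connected with diameter at least $1$, and a $c_1$-elementary $q$-cube needs $q$ nondegenerate interval factors. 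Since $X$ has no adjacencies at all, none of these structures exist, so $C_q(X) = \bar C_q(X) = \bar C_q^{c_1}(X) = 0$ for $q>0$, and hence the homology vanishes.

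There is no real obstacle here; the statement is a unifying summary rather than a new result. I would simply write a one-sentence proof of the form ``This follows immediately from Theorems \ref{simplicialisolated}, \ref{cubicalisolated}, and \ref{c1isolated}.''
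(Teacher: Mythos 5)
Your proposal matches the paper exactly: the theorem is presented there as a consolidation of earlier results, justified simply by citing Theorems \ref{simplicialisolated}, \ref{cubicalisolated}, and \ref{c1isolated}, with no further argument given. One minor correction to your optional self-contained sketch: for $q>0$ the cubical chain group $\bar H_q$-side vanishes not because no $q$-cubes exist (constant maps $I^q\to X$ are perfectly good continuous $q$-cubes even when $X$ has no adjacencies), but because every such $q$-cube is constant, hence degenerate, and therefore dies in the quotient $\bar C_q(X)=Q_q(X)/D_q(X)$.
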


Now we describe some more interesting examples. A fruitful source of example digital images is the \emph{digital cycle} $C_m$, the digital image consisting of $n$ points $x_1,\dots,x_m$ where $x_i \adj x_j$ only when $j=i\pm 1$, where we read $i$ and $j$ modulo $n$. As we will see, the simplicial and cubical homologies for digital cycles (as far as we can compute them) agree in all cases except $H_1(C_4) = \Z \neq 0 = \bar H_1^{c_1}(C_4)$. 

We will use a lemma which is straightforward but interesting. For two digital images $(X,\kappa)$ and $(Y,\lambda)$, we say $X$ \emph{embeds} in $Y$ if $X$ is $(\kappa,\lambda)$-isomorphic to a subset of $Y$.

\begin{lem}
The digital cycle $C_m$ embeds in $(\Z^n,c_1)$ for some $n$ if and only if $m$ is even.
\end{lem}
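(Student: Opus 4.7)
The plan is to prove the two directions separately: the forward direction is a short parity argument, while the backward direction requires an explicit construction.

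For the ``only if'' direction, I would show that $(\Z^n,c_1)$ is bipartite for every $n$. Color each $x\in\Z^n$ by the parity of $\sum_i x_i$. Since two $c_1$-adjacent points differ by $\pm 1$ in exactly one coordinate, they have opposite parity, giving a proper $2$-coloring. Any induced subgraph of a bipartite graph is bipartite, and $C_m$ is bipartite if and only if $m$ is even, so no embedding of $C_m$ into $(\Z^n,c_1)$ can exist when $m$ is odd.

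For the converse, when $m=2$ the embedding $\{0,1\}\subset\Z$ suffices. For $m=2k$ with $k\ge 2$, I would embed $C_{2k}$ into $(\Z^k,c_1)$ as a ``staircase'' cycle. Letting $e_1,\dots,e_k$ denote the standard basis vectors, set
\[ v_i = e_1+\cdots+e_i \quad (0\le i\le k), \qquad v_{k+l} = e_{l+1}+\cdots+e_k \quad (1\le l\le k-1). \]
The path climbs from $0$ to $(1,\dots,1)$ by flipping one coordinate at a time to $1$, then peels them back off to return to $0$. Consecutive vertices differ in a single coordinate, so they are $c_1$-adjacent, and the closing edge from $v_{2k-1}=e_k$ to $v_0=0$ is also a single coordinate change.

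The main content is verifying that the induced subgraph on $\{v_0,\dots,v_{2k-1}\}$ contains no spurious edges. For two ascending vertices $v_i,v_j$ with $0\le i<j\le k$ the Hamming distance is $j-i$ by direct inspection, and the analogous argument handles two descending vertices. For one ascending $v_i$ and one descending $v_{k+l}$, splitting $\{1,\dots,k\}$ into the three blocks $[1,\min(i,l)]$, $[\min(i,l)+1,\max(i,l)]$, $[\max(i,l)+1,k]$ and counting agreement on each block gives Hamming distance $k-|i-l|$. This equals $1$ exactly when $|i-l|=k-1$, which, inside the ranges $0\le i\le k$ and $1\le l\le k-1$, forces either $(i,l)=(k,1)$ or $(i,l)=(0,k-1)$; these correspond precisely to the two consecutive cycle edges $(v_k,v_{k+1})$ and $(v_{2k-1},v_0)$. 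The cross-sides Hamming-distance computation is the one step that requires genuine case analysis, but the uniform formula $k-|i-l|$ packages it cleanly, so the embedding is faithful.
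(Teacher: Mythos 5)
Your proof is correct, and the two halves split the same way as the paper's, but the construction in the ``if'' direction is genuinely different. The ``only if'' half coincides with the paper's: $(\Z^n,c_1)$ is bipartite via the parity of the coordinate sum, so an odd cycle cannot embed. For the converse, the paper builds low-dimensional models case by case --- $[0,1]_\Z\subset\Z$ for $m=2$, the unit square in $\Z^2$ for $m=4$, a hexagonal circuit on the unit cube in $\Z^3$ for $m=6$, and a flattened ``racetrack'' in $\Z^2$ for even $m>6$ --- whereas you embed $C_{2k}$ uniformly into the hypercube $\{0,1\}^k\subset(\Z^k,c_1)$ as a staircase up to $(1,\dots,1)$ and back. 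Your check that this is an \emph{induced} cycle is exactly the point that needs care (an embedding must have no chords), and your cross-case Hamming distance $k-|i-l|$ is right: on the ranges $0\le i\le k$, $1\le l\le k-1$ it is at least $1$ (so the $2k$ points are also pairwise distinct, which you should perhaps say explicitly) and equals $1$ only at $(i,l)=(k,1)$ and $(0,k-1)$, which are the genuine edges $(v_k,v_{k+1})$ and $(v_{2k-1},v_0)$. The trade-off: the paper keeps the ambient dimension at most $3$, which is geometrically economical, while your construction needs dimension $m/2$ but is a single formula with no case analysis in $m$ and subsumes $m=4$ and $m=6$; since the lemma only asks for \emph{some} $n$, either suffices. (Incidentally, the uniform formula avoids the sort of bookkeeping slip in the paper's racetrack set, which as written has $m+2$ points rather than $m$.)
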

\begin{proof}
When we view $(\Z^n,c_1)$ as a graph, it is bipartite, with the two parts given by:
\begin{align*} S^+ &= \{ (x_1,\dots,x_n) \mid x_1 + \dots + x_n \text{ is even} \}, \\
S^- &= \{ (x_1,\dots,x_n) \mid x_1 + \dots + x_n \text{ is odd} \}. 
\end{align*}
Thus $C_m$ can only embed into $(\Z^n,c_1)$ if it too is bipartite, and this is only possible when $m$ is even. We have shown that if $m$ embeds in $(\Z^n,c_1)$, then $m$ is even.

For the converse, let $m>0$ be even and we will exhibit a subset of $(\Z^n,c_1)$ for some $n$ isomorphic to $C_m$. When $m=2$, then we may simply use $[0,1]_\Z \subset \Z^1$, which is isomorphic to $C_2$. When $m=4$, then we may use $[0,1]^2 \subset \Z^2$, which is isomorphic to $C_4$. When $m=6$, we observe that $C_6$ is isomorphic to the following subset of $\Z^3$, using $c_1$-adjacency: 
\[ \{(0,0,0),(1,0,0),(1,1,0),(1,1,1),(0,1,1),(0,0,1) \} \]

Finally when $m>6$ is even, the cycle $C_m$ is isomorphic to the following subset of $\Z^2$:
\[ ([0,m/2-1] \times \{0,2\}) \cup \{ (0,1), (m/2 -1,1) \}. \qedhere \]
\end{proof} 

Now we can compute the various homology groups of $C_m$. Whenever possible (when $m$ is even), we will consider $C_m$ as embedded in $\Z^n$ with $c_1$-adjacency.

\begin{thm}\label{cyclecomputation}
For $m < 4$, we have:
\[ H_q(C_m) = \bar H_q(C_m) = \bar H_q^{c_1}(C_2) =\begin{cases} \Z &\text{ if } q=0, \\
0 &\text{ if } q>0. \end{cases} \]
For $m=4$, we have:
\[ H_q(C_4) = \begin{cases} \Z &\text{ if } q=0, \\
\Z &\text{ if } q=1, \\
0 &\text{ if } q>1, \end{cases}
\qquad
\bar H_q(C_4) = \bar H_q^{c_1}(C_4) = \begin{cases} \Z &\text{ if } q=0, \\
0 &\text{ if } q>0. \end{cases}
\]
For $m > 4$ we have:
\[  H_q(C_m) = \begin{cases} \Z &\text{ if } q=0, \\
\Z &\text{ if } q=1, \\
0 &\text{ if } q>1, \end{cases} 
\qquad
\bar H_q(C_m) = \begin{cases} \Z &\text{ if } q=0, \\
\Z &\text{ if } q=1. \end{cases}
\]
When $m>4$ is even, we have $H_q(C_m) = \bar H_q^{c_1}(C_m)$.
\end{thm}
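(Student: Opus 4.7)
The plan is to split the computation by the value of $m$ and by which homology theory is being computed, handling the small-$m$ cases by contractibility and the $m > 4$ cubical cases by a winding-number argument. For $m \in \{2, 3\}$, I would observe that $C_m$ is a complete graph; the constant map to any fixed vertex is strongly homotopic in one step to the identity (via Theorem~\ref{stronghtpadjs}, since every pair of vertices is adjacent). Theorem~\ref{prism} then reduces the simplicial homology of $C_m$ to that of a single point, and Corollaries~\ref{contractiblecube} and~\ref{c1contractible} take care of the cubical and (for $m=2$, with the embedding $C_2 \cong [0,1]_{\Z}$, $n=1$) $c_1$-cubical theories. The case $m=4$ is similar except strong homotopy no longer suffices: Example~\ref{strongrigid} shows $\id_{C_4}$ is not strongly contractible, but Example~\ref{strongrigid} also notes that all selfmaps on $C_4$ are ordinarily homotopic, so $\id_{C_4}$ is homotopic to a constant; the two cubical theories are homotopy-invariant (Corollaries~\ref{contractiblecube} and~\ref{c1contractible}, the latter applied to $C_4 \cong [0,1]_\Z^2 \subset \Z^2$ with $n=2 \le 3$), so both collapse to the point's homology, while the simplicial side is given by Theorem~\ref{Cnhomology}.

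For $m > 4$, the simplicial case is again Theorem~\ref{Cnhomology} and $\bar H_0(C_m) = \Z$ is Theorem~\ref{cubicaldim0}. The substantive task is $\bar H_1(C_m) = \Z$, which I would establish in two steps. First, combining Theorems~\ref{H1surjection} and~\ref{simplicialiso} produces a surjection $\Z = H_1(C_m) \to \bar H_1(C_m)$, so $\bar H_1(C_m)$ is a cyclic group. Second, I would rule out finiteness by defining a winding-number homomorphism $w \colon \bar C_1(C_m) \to \Z$ by $w([c_i, c_{i+1}]) = 1$ and $w([c_{i+1}, c_i]) = -1$ and showing that $w$ vanishes on $\bar B_1(C_m)$. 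The crucial observation is that for $m > 4$, the abstract graph $C_m$ contains no $4$-cycle as a subgraph, so the image of any $2$-cube $\sigma \colon I^2 \to C_m$ uses at most three distinct vertices and therefore lies in a proper arc of $C_m$. Such an arc is isomorphic to a digital interval, so $\sigma$ factors through the ``universal cover'' $\Z \to C_m$, and $w$ of each of the four boundary edges equals a height difference in the chosen lift. A telescoping sum around $\pd I^2$ then gives $w(\pd \sigma) = 0$. Since $w$ of the standard loop $\sum_{i=0}^{m-1} [c_i, c_{i+1}]$ equals $m \neq 0$, the surjection cannot land in a torsion group, so $\bar H_1(C_m) \cong \Z$.

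For the final assertion that $H_q(C_m) = \bar H_q^{c_1}(C_m)$ when $m > 4$ is even, I would use the concrete embeddings from the preceding lemma: $C_6 \subset \Z^3$ as the six vertices of a staircase, and for $m \ge 8$, $C_m \subset \Z^2$ as two parallel rows joined at the two ends. A short coordinate-wise inspection of these point sets confirms that no elementary $2$-cube of $\Z^n$ (a unit square in some coordinate plane) is fully contained in the embedded cycle. Consequently $\bar C_q^{c_1}(C_m) = 0$ for all $q \ge 2$, and the $c_1$-cubical chain complex in dimensions $0$ and $1$ coincides with the simplicial chain complex of the graph $C_m$. The calculation in Theorem~\ref{Cnhomology} then transfers verbatim to give $\bar H_0^{c_1}(C_m) = \Z$, $\bar H_1^{c_1}(C_m) = \Z$, and $\bar H_q^{c_1}(C_m) = 0$ for $q > 1$, matching $H_q(C_m)$ in every dimension.

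The main obstacle is the winding-number argument establishing $\bar H_1(C_m) \ne 0$ for $m > 4$: verifying rigorously that every $2$-cube factors through a proper arc and that the lifted telescoping works is where the real content lies. Everything else is either supplied by results already in the paper or reduces to a finite combinatorial verification.
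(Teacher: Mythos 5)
Your proposal is correct, but it takes a genuinely different route from the paper at the key step. The paper disposes of $\bar H_1(C_m)\cong\Z$ for $m>4$ in one line by citing the Hurewicz theorem of Jamil--Ali together with $\pi_1(C_m)=\Z$, and it computes $\bar H^{c_1}_q(C_4)$ and $\bar H^{c_1}_q(C_m)$ ($m>4$ even) by direct inspection of the elementary cubical chain complex; your proof replaces the Hurewicz citation with a self-contained argument: the surjection $\Z\cong H_1(C_m)\cong\v H_1(C_m)\twoheadrightarrow \bar H_1(C_m)$ from Theorems \ref{simplicialiso} and \ref{H1surjection} shows the group is cyclic, and your winding-number homomorphism shows it is infinite. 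That argument is sound, and its crucial combinatorial input is fine, with one small point to make explicit: since $I^2$ has only four points, a $2$-cube's image is the image of the four corners, and to conclude it lies in an arc you need $C_m$ to contain no $4$-cycle \emph{and} no $3$-cycle (a triangle image would also fail to lie in an arc); both hold for $m\ge 5$, and then the lift-and-telescope computation of $w(\pd\sigma)=0$ works exactly as you describe. Your treatment of the small cases also differs in flavor: you derive $H_q(C_m)=0$ ($q>0$, $m\le 3$) from strong-homotopy invariance (Theorem \ref{prism}) plus functoriality, and you get $\bar H^{c_1}_q(C_4)=0$ from Corollary \ref{c1contractible}, which silently leans on the computer-verified chain-map result (Theorem \ref{c1chainmap}); the paper instead does these by elementary hand computation (e.g.\ the unique cubical $1$-cycle in $C_4$ bounds the elementary $2$-cube), which is more economical and independent of the enumeration. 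What your approach buys is independence from the external Hurewicz theorem and from knowledge of $\pi_1(C_m)$, at the cost of the arc/lifting verification; the paper's approach buys brevity and keeps the $c_1$-cubical computations free of the conjectural/computer-assisted machinery. Your final step for $m>4$ even (no elementary $2$-cubes in the chosen embeddings, so the $c_1$-cubical complex agrees with the simplicial one in degrees $0$ and $1$) is essentially the paper's argument, just stated with more detail.
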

\begin{proof}
The groups $H_q(C_m)$ were already computed fully in Theorem \ref{Cnhomology}, so we need only prove the statements concerning $\bar H_q(C_m)$ and $\bar H^{c_1}_q(C_m)$. 

For $\bar H_q(C_m)$, we see that $\bar H_0(C_m) = \Z$ by Theorem \ref{H0thm}, and $\bar H_q(C_4)=0$ for $q>0$ by Corollary \ref{contractiblecube} since $C_4$ is contractible. In fact $C_m$ is contractible for all $m\le 4$, and so we have $\bar H_q(C_m) = 0$ for $q>1$ and $m\le 4$. For $m>4$ we have $\bar H_1(C_m) \cong \Z$ by the Hurewicz Theorem of \cite{ja19}, since $\pi_1(C_m) = \Z$ for $m>4$. 

For $\bar H_q^{c_1}(C_2)$, the required statement is clear because $C_2$ is connected with no elementary cubical $q$-cycles for any $q>0$. 

For $\bar H_q^{c_1}(C_4)$, the statement for $q=0$ follows from Theorem \ref{H0thm}. For $q=1$ the group of elementary cubical $1$-cycles has a single generator, but it is the boundary of an elementary 2-cube, and so $\bar H^{c_1}_1(C_4) = 0$. For $q>1$ there are no elementary $q$-cycles, and so $\bar H^{c_1}_q(C_4) = 0$ when $q>1$.

It remains to compute $\bar H_q^{c_1}(C_m)$ when $m>4$ is even. Since $C_m$ is connected we have $\bar H^{c_1}_0(C_m) = \Z$ by Theorem \ref{H0thm}. The group of elementary $1$-cycles has a single generator which is not the boundary of any elementary cubical $2$-chain, and so $\bar H^{c_1}_1(C_m) = \Z$. For $q>1$ there are no elementary cubical $q$-cycles, and so $\bar H_q^{c_1}(C_m) = 0$ as desired.
\end{proof}

It is obviously to be expected that
\[
H_q(C_m) = \bar H_q(C_m) = \begin{cases} \Z &\text{ if } q=0, \\
\Z &\text{ if } q=1, \\
0 &\text{ if } q>1, \end{cases} 
\]
for all values $m>4$ including odd values, though this seems difficult to prove. The definitions of cubical homology make even the group $\bar H_2(C_5)$ very hard to compute by hand. So we will state this as a conjecture:
\begin{conj}
For $m>4$, we have:
\[
H_q(C_m) = \bar H_q(C_m) = \begin{cases} \Z &\text{ if } q=0, \\
\Z &\text{ if } q=1, \\
0 &\text{ if } q>1, \end{cases} 
\]
\end{conj}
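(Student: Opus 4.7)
The simplicial side is already done by Theorem \ref{Cnhomology}, and on the cubical side we already have $\bar H_0(C_m) = \Z$ from Theorem \ref{H0thm} and $\bar H_1(C_m) = \Z$ from the Hurewicz theorem of \cite{ja19} combined with the classical fact that $\pi_1(C_m) = \Z$ for $m>4$. Thus the conjecture reduces to showing $\bar H_q(C_m) = 0$ for every $q \geq 2$ and every $m > 4$, for both even and odd $m$.

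My plan is to mimic the classical computation of $H_*(S^1)$ via Mayer--Vietoris. Choose two arcs $A$ and $A'$ of $C_m$ (digital subintervals) such that $C_m = A \cup A'$ and $A \cap A'$ is a disjoint union of two digital intervals. Each of $A$, $A'$, and the two components of $A \cap A'$ is contractible, so by Corollary \ref{contractiblecube}, $\bar H_q(A) = \bar H_q(A') = 0$ for $q > 0$ while $\bar H_0(A) = \bar H_0(A') = \Z$ and $\bar H_0(A \cap A') = \Z^2$. Assuming a Mayer--Vietoris long exact sequence
\[
\cdots \to \bar H_q(A \cap A') \to \bar H_q(A) \oplus \bar H_q(A') \to \bar H_q(C_m) \to \bar H_{q-1}(A \cap A') \to \cdots
\]
holds in digital cubical homology, the flanking terms vanish for $q \geq 2$ and we obtain $\bar H_q(C_m) = 0$ immediately; the sequence's tail reconfirms $\bar H_1(C_m) = \Z$ and $\bar H_0(C_m) = \Z$ in a manner independent of the Hurewicz input.

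The main obstacle is that no Mayer--Vietoris theorem is available in \cite{ja19} or elsewhere in the digital cubical setting. Classically one needs a small-cubes (excision) argument: every cube whose image straddles $A$ and $A'$ must be rewritten, modulo boundaries, as a sum of cubes each lying entirely in $A$ or entirely in $A'$. In our setting the domain $I^q = \{0,1\}^q$ admits no geometric refinement, so the usual subdivision proof does not port over directly. The route I would pursue is an algebraic subdivision operator $\mathrm{sd}: \bar C_q(X) \to \bar C_q(X)$, together with a chain homotopy between $\mathrm{sd}$ and the identity, constructed by an explicit combinatorial recipe on individual cubes. Because any cube $\sigma:I^q \to C_m$ has image of diameter at most $q$ in $C_m$, for $q$ less than half the length $m$ the image fits in an arc, which should make such an operator manageable; however, cubes of dimension $q \geq \lceil m/2 \rceil$ can have images that wrap around $C_m$, and the bookkeeping for these wrapping cubes is where the argument becomes delicate.

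As a back-up plan that avoids Mayer--Vietoris entirely, I would attempt a direct chain contraction: construct a chain homotopy $s : \bar C_q(C_m) \to \bar C_{q+1}(C_m)$ with $\partial s + s \partial = \mathrm{id}$ on $\bar C_q(C_m)$ for $q \geq 2$, built case-by-case according to the winding number and image arc of each generating cube. The combinatorial complexity of higher-dimensional cubes in a cycle is exactly what the excerpt flags as the difficulty, and I expect either strategy to succeed in principle but to require substantial case analysis (or computer verification for small $m$) before a clean human-readable proof emerges.
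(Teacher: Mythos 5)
There is a genuine gap here: your write-up is a research plan rather than a proof. The easy reductions you make are correct and match what the paper already establishes --- $H_q(C_m)$ is fully computed in Theorem \ref{Cnhomology}, $\bar H_0(C_m)=\Z$ follows from Theorem \ref{H0thm}, and $\bar H_1(C_m)\cong\Z$ follows from the Hurewicz theorem of \cite{ja19} since $\pi_1(C_m)\cong\Z$ for $m>4$ --- so the entire content of the statement is the vanishing $\bar H_q(C_m)=0$ for $q\ge 2$. But your argument for that vanishing rests on a Mayer--Vietoris sequence (equivalently, an excision/small-cubes theorem) for digital cubical homology, which you yourself note does not exist in \cite{ja19} or anywhere else, and which you do not construct. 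The classical proof of excision runs through barycentric subdivision of the domain, and as you observe the digital domain $I^q=\{0,1\}^q$ admits no such refinement; so the subdivision operator $\mathrm{sd}$ and the chain homotopy to the identity that you invoke are not minor technicalities to be filled in later --- they are precisely the missing mathematics. The same applies to your fallback plan of a direct chain contraction $\partial s+s\partial=\mathrm{id}$ in degrees $q\ge 2$: note that $\bar C_q(C_m)$ is nonzero in every dimension (e.g.\ $\sigma(t_1,\dots,t_q)=c_{t_1+\cdots+t_q}$ is a nondegenerate $q$-cube), so the vanishing is not formal, and the ``wrapping'' cubes you flag for $q\gtrsim m/2$ are exactly where any such construction must do real work. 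Neither route is carried out, so the statement remains unproved by your proposal.

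For context, this is why the paper states the claim only as a conjecture: the author explicitly remarks that even $\bar H_2(C_5)$ is very hard to compute by hand, and offers no proof. Your proposal is a reasonable map of possible attacks (a digital Mayer--Vietoris theorem, or an explicit contraction of the cubical chain complex of $C_m$ above degree $1$), and proving either ingredient would be a genuine contribution beyond the paper; but as it stands the key step is deferred rather than established, so the proposal cannot be accepted as a proof.
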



One example where the simplicial and cubical homologies are quite different is the standard 3-cube $I^3 = [0,1]_\Z^3$ taken with $c_1$-adjacency.
\begin{exl}\label{I3exl}
Consider the digital image $I^3$ with $c_1$-adjacency. We have:
\[ H_q(I^3) = \begin{cases} \Z &\text{ if } q = 0, \\
\Z^5 &\text{ if } q=1, \\
\Z & \text{ if } q=2,\\
0 &\text{ if } q>2, \end{cases} \qquad \bar H_q(I^3) = \bar H_q^{c_1}(I^3) = \begin{cases} \Z  &\text{ if } q=0, \\
0 &\text{ if } q>0. \end{cases} \]
\end{exl}
\begin{proof}
The computation of the simplicial homology was done in Theorem 3.20 of \cite{bko11}, where the image in question is called $\mathit{MSS}'_6$. The appearance of $\Z^5$ as the homology group in dimension 1 is surprising. The generators of $H_1(I^3)$ are formed by making $1$-cycles around each of the 6 faces of the cube. This produces six $1$-cycles which are not boundaries, but they are not linearly independent-- each one can be obtained by a combination of the other 5, and thus we have only 5 linearly independent $1$-cycles. The details of the computation are given in \cite{bko11}.

In the case of cubical homology, each of these $1$-cycles is indeed a boundary of the $2$-cube which makes the corresponding face of the cube. Indeed $I^3$ is contractible, and so the computations of $\bar H_q(I^3)$ and $\bar H_q^{c_1}(I^3)$ follow from Corollary \ref{contractiblecube} and \ref{c1contractible}.
\end{proof}

\begin{exl}
Let $X = [0,2]_\Z^3 - \{(1,1,1)\}$, taken with $c_1$-adjacency. This digital image is called $\mathit{MSS}_6$ in \cite{bko11}, though its homology is not computed, presumably because $H_1(X)$ is very large. By Theorem \ref{H0thm} we have $H_0(X) = \bar H^{c_1}_0(X) = \Z$. 

In dimension 1 there are 24 different 1-cycles tracing around unit squares. In $\bar H^{c_1}_1(X)$ these are all trivial since these cycles are boundaries of $2$-cubes. In $H_1(X)$ these are all nontrivial, although as in $I^3$ one of these cycles can be written as a sum of the other 23. The computer implementations confirm that $H_1(X) \cong \Z^{23}$ and $\bar H_1^{c_1}(X) \cong 0$. 

In dimension 2 there are no 2-simplices, so $H_2(X)=0$. The above mentioned 24 unit squares, when taken together, form a $2$-cycle in $\bar Z^{c_1}_2(X)$ which is not the boundary of any $3$-cycle, and thus $\bar H_2^{c_1}(X)\neq 0$. This example is tractable for our computer implementation, which confirms that $\bar H_2^{c_1}(X) = \Z$. 

For $q>2$ there are no $q$-simplices or elementary $q$-cubes, so $H_q(X) = \bar H^{c_1}_q(X) = 0$. In summary, we have:
\[ H_q(X) = \begin{cases} \Z &\text{ if } q=0 \\
\Z^{23} &\text{ if } q=1 \\
0 &\text{ if } q>1\end{cases}
\quad
\bar H_q^{c_1}(X) = \begin{cases}
\Z &\text{ if } q=0 \\
0 &\text{ if } q=1 \\
\Z &\text{ if } q=2\\
0 &\text{ if } q>2\end{cases} \]

Note that $H_2(X) = 0$, while $\bar H^{c_1}_2(X) \cong \Z$. Thus the example demonstrates that, in contrast with Theorem \ref{H1surjection}, there is not always a surjective homomorphism of $H_q(X) \to \bar H^{c_1}_q(X)$ when $q>1$.
\end{exl}

\bibliographystyle{hplain}

\begin{thebibliography}{10}

\bibitem{ako08}
H.~Arslan, I.~Karaca, and A.~{\"O}ztel.
\newblock Homology groups of $n$-dimensional digital images.
\newblock In {\em Turkish National Mathematics Symposium XXI}, pages B1--13,
  2008.

\bibitem{boxe99}
Laurence Boxer.
\newblock A classical construction for the digital fundamental group.
\newblock {\em J. Math. Imaging Vision}, 10(1):51--62, 1999.

\bibitem{boxe17}
Laurence Boxer.
\newblock Generalized normal product adjacency in digital topology.
\newblock {\em Appl. Gen. Topol.}, 18(2):401--427, 2017.

\bibitem{bko11}
Laurence Boxer, Ismet Karaca, and Ahmet \"{O}ztel.
\newblock Topological invariants in digital images.
\newblock {\em J. Math. Sci. Adv. Appl.}, 11(2):109--140, 2011.

\bibitem{bs19}
Laurence Boxer and P.~Christopher Staecker.
\newblock Remarks on fixed point assertions in digital topology.
\newblock {\em Appl. Gen. Topol.}, 20(1):135--153, 2019.
\newblock arxiv eprint 1806.06110.

\bibitem{ek13}
O.~Ege and I.~Karaca.
\newblock Fundamental properties of digital simplicial homology groups.
\newblock {\em American Journal of Computer Technology and Application},
  1:25--41, 2013.

\bibitem{han05}
Sang-Eon Han.
\newblock Non-product property of the digital fundamental group.
\newblock {\em Inform. Sci.}, 171(1-3):73--91, 2005.

\bibitem{hatc02}
Allen Hatcher.
\newblock {\em Algebraic topology}.
\newblock Cambridge University Press, Cambridge, 2002.

\bibitem{ja19}
S.~S. Jamil and D.~Ali.
\newblock Digital {H}urewicz theorem and digital homology theory.
\newblock 2019.
\newblock arxiv eprint 1902.02274v3.

\bibitem{kmm03}
Tomasz Kaczynski, Konstantin Mischaikow, and Marian Mrozek.
\newblock Computing homology.
\newblock {\em Homology Homotopy Appl.}, 5(2):233--256, 2003.
\newblock Algebraic topological methods in computer science (Stanford, CA,
  2001).

\bibitem{ke12}
I.~Karaca and O.~Ege.
\newblock Cubical homology in digital images.
\newblock {\em International Journal of Information and Computer Science},
  1:178--187, 2012.

\bibitem{lee14}
D.~W. Lee.
\newblock Digital singular homology groups of digital images.
\newblock {\em Far East Journal of Mathematics}, 88:39--63, 2014.

\bibitem{los19}
G.~Lupton, J.~Oprea, and N.~Scoville.
\newblock A fundamental group for digital images.
\newblock 2019.
\newblock Preprint.

\bibitem{mass91}
William~S. Massey.
\newblock {\em A Basic Course in Algebraic Topology}, volume 127 of {\em
  Graduate Texts in Mathematics}.
\newblock Springer-Verlag, New York, 1991.

\bibitem{rose86}
A.~Rosenfeld.
\newblock `{C}ontinuous' functions on digital pictures.
\newblock {\em Pattern Recognition Letters}, 4:177--184, 1986.

\end{thebibliography}

\end{document}